\newcommand{\N}{\mathbb N}
\newcommand{\R}{\mathbb R}
\newcommand{\abs}[1]{\left\vert#1\right\vert}
\newcommand{\eps}{\varepsilon}
\newcommand{\1}{\mathbbm{1}}
\newcommand{\RNP}{Radon--Nikod\'{y}m property}
\newcommand{\norm}[1]{\left\Vert#1\right\Vert}
\newcommand{\nnorm}[1]{\left\vvvert#1\right\vvvert}
\newcommand{\conv}{\mathop{\mathrm{conv}}\nolimits}
\newcommand{\cconv}{\mathop{\overline{\mathrm{conv}}}\nolimits}
\newcommand{\diam}{\mathop{\mathrm{diam}}\nolimits}
\newcommand{\supp}{\mathop\mathrm{supp}}
\newcommand{\dent}{\mathop\mathrm{dent}}
\newcommand{\ext}{\mathop\mathrm{ext}}
\newcommand{\spn}{\mathop{\mathrm{span}}\nolimits}
\newcommand{\cspan}{\mathop{\overline{\mathrm{span}}}\nolimits}
\newcommand{\sign}{\mathop{\mathrm{sign}}\nolimits}
\newtheorem{theorem}{Theorem}[section]
\newtheorem*{theorem*}{Theorem}
\newtheorem{lemma}[theorem]{Lemma}
\newtheorem{question}[theorem]{Question}
\newtheorem{proposition}[theorem]{Proposition}
\newtheorem{corollary}[theorem]{Corollary}
\theoremstyle{definition}
\newtheorem{definition}[theorem]{Definition}
\newtheorem{example}[theorem]{Example}
\theoremstyle{remark}
\newtheorem{remark}[theorem]{Remark}
\numberwithin{equation}{section}
\begin{document}

\title{Unconditional bases and Daugavet renormings}

\author{Rainis Haller}
\address{Institute of Mathematics and Statistics, University of Tartu, Narva mnt 18, 51009 Tartu, Estonia}
\email{rainis.haller@ut.ee}
\urladdr{}

\author{\href{https://orcid.org/0000-0001-9649-7282}{Johann Langemets}}
\address{Institute of Mathematics and Statistics, University of Tartu, Narva mnt 18, 51009 Tartu, Estonia}
\email{johann.langemets@ut.ee}
\urladdr{\url{https://johannlangemets.wordpress.com/}}

\author{Yo\"el Perreau}
\address{Institute of Mathematics and Statistics, University of Tartu, Narva mnt 18, 51009 Tartu, Estonia}
\email{yoel.perreau@ut.ee}
\urladdr{}

\author{Triinu Veeorg}
\address{Institute of Mathematics and Statistics, University of Tartu, Narva mnt 18, 51009 Tartu, Estonia}
\email{triinu.veeorg@ut.ee}
\urladdr{}

\thanks{This work was supported by the Estonian Research Council grants PRG1901, PSG487 and SJD58.}
\subjclass[2020]{}
\keywords{}

\begin{abstract} 

We introduce a new diametral notion for points of the unit sphere of Banach spaces, that naturally complements the notion of $\Delta$-points, but is weaker than the notion of Daugavet points. We prove that this notion can be used to provide a new geometric characterization of the Daugavet property, as well as to recover -- and even to provide new -- results about Daugavet points in various contexts such as absolute sums of Banach spaces or projective tensor products. Finally, we show that this notion leads to powerful new ideas for renorming questions, and that those ideas can be combined with previous constructions from the literature in order to renorm every infinite dimensional Banach space with an unconditional Schauder basis to have a Daugavet point.
    
\end{abstract}

\maketitle



\section{Introduction}\label{section:introduction}

In this note, we study pointwise versions of the Daugavet property (DPr) and of various diametral diameter 2 properties (DD2P, DLD2P and property $\mathfrak{D}$) that were introduced in \cite{AHLP} and \cite{MPRZ}. If $X$ is a Banach space, then we denote respectively by $B_X$ and $S_X$ the closed unit ball  and the unit sphere of $X$. We also denote by $X^*$ the topological dual of $X$. For simplicity, we deal with real Banach spaces only.

For a given $x\in S_X$, we write $D(x)$ the set of all norm-one \emph{supporting functionals} at $x$, that is the set of all $x^*\in S_{X^*}$ which satisfy $x^*(x)=1$. By a \emph{slice} of $B_X$, we mean any non-empty subset of $B_X$ obtained by intersecting $B_X$ with an open half-space. Any such set can be written in the form \begin{equation*}
    S(x^*,\alpha):=\{x\in B_X:\ x^*(x)>1-\alpha\}
\end{equation*} for some $x^*\in S_{X^*}$ and $\alpha>0$. We will denote this set by $S(B_X,x^*,\alpha)$  whenever we want to specify which unit ball we are slicing. By a \emph{relatively weakly open subset} of $B_X$, we mean any subset of $B_X$ obtained by intersecting $B_X$ with an open subset of $X$ for the weak topology. 

\begin{definition}[{\cite{AHLP, MPRZ}}]\label{defn:diametral_points}

    Let $X$ be a Banach space, and let $x\in S_X$. We say that $x$ is 
    \begin{enumerate}
        \item\label{item:super-Daugavet} a \emph{super Daugavet point} if $\sup_{y\in V}\norm{x-y}=2$ for every non-empty relatively weakly open subset $V$ of $B_X$;

        \item\label{item:super-Delta} a \emph{super $\Delta$-point} if $\sup_{y\in V}\norm{x-y}=2$ for every non-empty relatively weakly open subset $V$ of $B_X$  that contains $x$;

        \item\label{item:Daugavet} a \emph{Daugavet point} if $\sup_{y\in S}\norm{x-y}=2$ for every slice $S$ of $B_X$;

        \item\label{item:Delta} a \emph{$\Delta$-point} if $\sup_{y\in S}\norm{x-y}=2$ for every slice $S$ of $B_X$  that contains $x$;

        \item\label{item:Dirk} a \emph{$\mathfrak{D}$-point} if $\sup_{y\in S(x^*,\alpha)}\norm{x-y}=2$ for every $x^*\in D(x)$ and $\alpha>0$.
    
    \end{enumerate}
\end{definition}

We will also consider natural weak$^*$ version of the points \ref{item:super-Daugavet}--\ref{item:Delta} in dual spaces, where slices and relatively weakly open subsets are respectively replaced by weak$^*$ slices (i.e. slices defined by an element of the predual) and relatively open subsets for the weak$^*$ topology.

Finite dimensional spaces contain no $\mathfrak{D}$-points by the results from \cite{ALMP} (see \cite[Corollary~5.3]{AALMPPV}), but it was recently shown in \cite[Theorem~4.1]{AALMPPV} that in infinite dimension, $\Delta$-points are very much an isometric notion,  as every infinite dimensional Banach space can be renormed with a $\Delta$-point, and as every infinite dimensional Banach space that fails the Schur property can be renormed with a super $\Delta$-point.

It was also proved in \cite[Theorem~2.1]{AALMPPV} that the Lipschitz-free space with the \RNP\ and with a Daugavet point that was constructed in \cite{VeeorgStudia} is isomorphic to $\ell_1$ and is isometrically a dual space. However, the question of whether infinite dimensional reflexive or super-reflexive Banach spaces could contain Daugavet points was left open. In this note, we fill this gap by constructing a renorming of $\ell_2$ with a super Daugavet point. More precisely, the following theorem is the main result of the present note.

\begin{theorem}\label{intro-thm:super_renorming}
    Let $X$ be an infinite dimensional Banach space with an unconditional weakly null Schauder basis $(e_n)$ and biorthogonal functionals $(e_n^*)$. Then there exists an equivalent
    norm $\nnorm{\cdot}$ on $X$ such that \begin{enumerate}
    
        \item $e_1$ is a super Daugavet point in $(X,\nnorm{\cdot})$;

        \item $e_1^*$ is a weak$^*$ super Daugavet point in $(E,\nnorm{\cdot})$, where $E:=\cspan\{e_n^*\}$.

    \end{enumerate}
     
In particular, if $(e_n)$ denotes the unit vector basis of $\ell_2$, then there exists a renorming of $\ell_2$ for which $e_1$ is a super Daugavet point in the new norm and its dual norm. 

\end{theorem}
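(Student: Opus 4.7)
The plan is to construct the equivalent norm $\nnorm{\cdot}$ explicitly and then to verify the two claims separately, leveraging both the weak nullness and the unconditionality of the basis. In view of the remark in the introduction that the construction combines new ideas with previous renormings, I would look for a norm of the form
\[
\nnorm{x} := \max\Bigl\{\tfrac{1}{M}\norm{x}_X,\ \sup\nolimits_{\phi \in \Phi} |\phi(x)|\Bigr\},
\]
where $M > 0$ and where $\Phi \subset E$ is a carefully chosen set of functionals, each normalized by $\phi(e_1) = 1$. The natural choice is a family of averaged form such as $\phi = e_1^* + \tfrac{1}{|A|}\sum_{n \in A}\varepsilon_n e_n^*$ indexed by finite $A \subset \N \setminus \{1\}$ and signs $\varepsilon_n \in \{-1,1\}$, possibly enriched so as to ensure the equivalence of norms and to make the dual structure on $E$ symmetric to the primal one.

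For the super Daugavet property at $e_1$, I would fix a non-empty relatively weakly open $V \subset B_{(X,\nnorm{\cdot})}$ and $\eps > 0$. Using that the open unit ball is weakly dense in the unit ball in infinite dimension, I pick $y^{(0)} \in V$ with enough slack in $\norm{\cdot}_X$ to tolerate a bounded perturbation of the basis. Since $(e_n)$ is weakly null and the biorthogonal functionals are continuous, for arbitrarily large $n$ and appropriate scalar $c$, the point $z := y^{(0)} + c\, e_n$ remains in $V$. The averaged structure of $\Phi$ is designed so that some $\phi \in \Phi$ involving the coordinate $e_n^*$ satisfies $|\phi(e_1 - z)| \geq 2 - \eps$ -- the main role of the averaging being precisely to make the effect of $e_n^*$ on $\phi$ small enough that $\nnorm{z} \leq 1$ is still preserved -- which yields $\nnorm{e_1 - z} \geq 2 - \eps$ as required.

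For the weak$^*$ super Daugavet property at $e_1^*$ in $E$, the argument is symmetric. The dual unit ball on $E$ inherits from the symmetric construction of $\Phi$ a description by a family of evaluations at vectors of $X$; and the basis $(e_n)$ viewed as such evaluations forms a weak$^*$-null sequence in $E$. Repeating the perturbation and averaging argument with the roles of $X$ and $E$ swapped produces, for every non-empty relatively weak$^*$-open $W \subset B_E$ and every $\eps > 0$, an element $f \in W$ with $\nnorm{e_1^* - f}^* \geq 2 - \eps$.

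The key technical obstacle is the simultaneous calibration of $\Phi$ and $M$: the family $\Phi$ must be large enough so that after any weakly small perturbation the bound $\nnorm{e_1 - z} \geq 2 - \eps$ is attainable, yet small enough for the norm to remain equivalent to $\norm{\cdot}_X$ and for the dual description on $E$ to be clean. I anticipate that the required construction combines the averaging idea above with an \emph{isomorphic correction} term from the $\Delta$-point renormings of \cite{AALMPPV}, and that the careful bookkeeping of the elementary inequalities governing the signs and sizes of the coefficients $(\varepsilon_n, c, |A|)$ forms the main technical content of the proof.
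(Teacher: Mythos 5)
Your proposal is a plan rather than a proof, and the specific norm it proposes cannot work. With a sign-symmetric family $\Phi$ containing $\phi = e_1^* + \frac{1}{|A|}\sum_{n\in A}\varepsilon_n e_n^*$ for all sign patterns $(\varepsilon_n)$, the membership condition $\sup_{\phi\in\Phi}|\phi(z)|\le 1$ forces $|e_1^*(z)| + \frac{1}{|A|}\sum_{n\in A}|e_n^*(z)|\le 1$ for each index set $A$ (choose the signs matching those of the coordinates of $z$). Hence if $z$ ranges over the new unit ball and converges weakly to $e_1$, then $e_1^*(z)\to 1$, so $\frac{1}{|A|}\sum_{n\in A}|e_n^*(z)|\le 1-|e_1^*(z)|\to 0$ uniformly over $A$, and therefore $\sup_{\phi\in\Phi}|\phi(e_1-z)|\le 2\bigl(1-e_1^*(z)\bigr)\to 0$. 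The only remaining contribution to $\nnorm{e_1-z}$ is $\frac1M\norm{e_1-z}\le \frac{M+1}{M}$, which is strictly less than $2$ for $M>1$ (and for $M\le 1$ the new ball sits inside $B_X$, so one would need $e_1$ to already be diametral for the original norm, which fails e.g. in $\ell_2$). So $e_1$ would not even be a super $\Delta$-point. The tension you flag between "small enough to preserve $\nnorm{z}\le 1$" and "large enough to give $|\phi(e_1-z)|\ge 2-\eps$" is not a calibration issue: some $\phi$ must see its value move by nearly $2$ (from near $+1$ down to near $-1$), and with both signs present the opposite-signed functional is then pushed outside $[-1,1]$. The missing idea is to break the sign symmetry. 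The paper sets $B_{(X,\nnorm{\cdot})}=\cconv\{\pm(e_1+2x): x\in A\cap B_X\}$ with $A$ the finitely supported \emph{positive} cone of $\cspan\{e_n\}_{n\ge 2}$; the resulting dual ball is generated by $\pm(e_1^*-x^*)$ and $\frac12(x^*-y^*)$ with $x^*,y^*$ positive, so only one sign of each tail coordinate ever accompanies $e_1^*$. This is precisely what allows $e_1+2a_ne_n$ to lie on the new unit sphere while $\nnorm{e_1-(e_1+2a_ne_n)}=2\nnorm{a_ne_n}=2$, and $1$-unconditionality is what makes the norm and dual-norm formulas computable.

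Beyond this, two substantive steps are absent even granting a corrected norming family. First, a super Daugavet point must be diametral for weak neighbourhoods of \emph{arbitrary} points of the ball, not only of points "with slack"; the paper handles this by decomposing every finitely supported $z$ in the new ball as $\lambda(e_1+2x)-(1-\lambda)(e_1+2y)$ with $x,y$ positive and disjointly supported, perturbing $x$ to $x+a_ne_n$ on the original unit sphere, and invoking the fact that the entire segment between $x+a_ne_n$ and $-(e_1+2y)$ lies on the new unit sphere. Second, the dual statement is not "symmetric by construction": it requires the explicit description of $B_{(E,\nnorm{\cdot})}$ and of the dual norm, and it uses the weak$^*$-nullness of the biorthogonal sequence $(e_n^*)$ — not of $(e_n)$, which are elements of $X$, not of $E$. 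As written, the proposal defers all of this to anticipated bookkeeping, so it does not constitute a proof.
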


Combining this result, the $\ell_1$-renorming from \cite{AALMPPV}, classic results from James about unconditional bases, and some observations on complemented subspaces, we will ultimately get the following statement. 

\begin{theorem}\label{intro-thm:unconditional_renorming}

Every infinite dimensional Banach space that contains a complemented unconditional basic sequence can be renormed with a Daugavet point and a weak$^*$ Daugavet point in its dual.
    
\end{theorem}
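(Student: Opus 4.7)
The plan is to split into two cases according to the dichotomy for normalized unconditional basic sequences, handle each with the indicated tool (Theorem~\ref{intro-thm:super_renorming} in one case, the $\ell_1$-renorming of \cite{AALMPPV} in the other), and then lift the resulting diametral points from the subspace to all of $X$ via an $\ell_\infty$-absolute sum.

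\emph{Setup.} Let $(x_n)$ be a complemented unconditional basic sequence in $X$, set $Y := \cspan\{x_n\}$, and write $X = Y \oplus Z$ for a closed complement $Z$. By Rosenthal's $\ell_1$-theorem together with James' characterization of shrinking bases, either $(x_n)$ admits a subsequence equivalent to the unit vector basis of $\ell_1$, or $(x_n)$ is itself weakly null. Moreover, the closed linear span of any subsequence of an unconditional basic sequence is complemented in the original span via the coordinate-restriction projection (bounded by the unconditional constant), so passing to such a subsequence preserves the complementation of the span in $X$. We may therefore assume that $(x_n)$ itself is either weakly null or equivalent to the $\ell_1$-basis.

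\emph{Applying the subspace tools.} In the weakly null case, Theorem~\ref{intro-thm:super_renorming} furnishes an equivalent norm $\nnorm{\cdot}_Y$ on $Y$ under which $e_1$ is a super Daugavet point and $e_1^*$ is a weak$^*$ super Daugavet point in $E := \cspan\{e_n^*\}$. Since a weakly null normalized sequence admits no $\ell_1$-equivalent subsequence (tested against the constant-one functional in $\ell_\infty$), James' theorem forces the renormed basis to be shrinking; hence $E = (Y, \nnorm{\cdot}_Y)^*$ and $e_1^*$ is a weak$^*$ super Daugavet point in the full dual. In the $\ell_1$-basis case, $Y$ is isomorphic to $\ell_1$, and the $\ell_1$-renorming of \cite{AALMPPV} equips $Y$ with an equivalent norm admitting a Daugavet point; using that this renorming realizes $Y$ isometrically as a Lipschitz-free space and hence as a dual space, a Goldstine-type transfer through the canonical embedding of the predual into $Y^*$ yields the required weak$^*$ Daugavet point in $Y^*$.

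\emph{Lifting to $X$.} Put $\nnorm{y + z} := \max\{\nnorm{y}_Y, \norm{z}_Z\}$ on $X = Y \oplus Z$, so that $(X, \nnorm{\cdot})^* = (Y, \nnorm{\cdot}_Y)^* \oplus_1 Z^*$ isometrically. A direct absolute-sums computation then lifts the Daugavet point $y_0 \in S_Y$ to the Daugavet point $(y_0, 0)$ in $X$: a slice of $B_X$ is defined by $(y^*, z^*) \in S_{Y^* \oplus_1 Z^*}$; when $\norm{y^*} > 0$ one invokes the Daugavet property of $y_0$ in a rescaled slice of $B_Y$, and when $\norm{y^*} = 0$ one takes $y := -y_0$ paired with $z \in B_Z$ nearly attaining $z^*$. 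The weak$^*$ dual computation is symmetric: a weak$^*$ slice of $B_{X^*}$ is defined by $(y, z) \in S_X$; if $\norm{y} = 1$ one uses the weak$^*$ Daugavet property of $y_0^*$, and if $\norm{z} = 1$ one takes $(0, z^*) \in B_{X^*}$ nearly attaining $z$, obtaining distance $\norm{y_0^*} + \norm{z^*} \approx 2$.

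The principal difficulty is the $\ell_1$-case: one must carefully track how the AALMPPV $\ell_1$-renorming, through its identification with a Lipschitz-free space, yields the weak$^*$ Daugavet point in the dual. The other ingredients, namely the dichotomy reduction and the absolute-sum extensions, are routine and mirror arguments already appearing in the paper.
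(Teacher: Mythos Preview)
Your overall strategy---reduce to an unconditional basis, apply Theorem~\ref{intro-thm:super_renorming} or the $\ell_1$-renorming of \cite{AALMPPV}, then lift via an absolute sum---is precisely the paper's route (Proposition~\ref{prop:DP_in_complemented_subspaces} combined with Corollary~\ref{cor:unconditional_renorming}). However, your weakly null case contains a genuine error. You claim that a weakly null normalized unconditional basis must be shrinking, arguing that it has no $\ell_1$-equivalent subsequence and then invoking James' theorem. But James' criterion for shrinking requires the \emph{span} to contain no copy of $\ell_1$, not merely that the basis itself have no $\ell_1$-subsequence; the paper explicitly cites examples from \cite{PS} and \cite{DD} of unconditional bases that are weakly null (semi-shrinking) yet not shrinking. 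So in your weakly null case $E=\cspan\{e_n^*\}$ may be a proper subspace of $Y^*$, and your assertion that $e_1^*$ is a weak$^*$ Daugavet point in the full dual $Y^*$ is unjustified as written.

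The paper sidesteps this by using a different split on the unconditional basis: shrinking, or not boundedly complete (yielding a complemented copy of $c_0$, whose basis \emph{is} shrinking), or not shrinking (yielding a complemented copy of $\ell_1$). Your dichotomy can also be repaired directly: even when $E\subsetneq Y^*$, the proof of Lemma~\ref{lem:properties_of_new_norm} shows that $(e_n)$ remains monotone in the new norm, so $E$ is $1$-norming for $(Y,\nnorm{\cdot})$; hence every weak$^*$ slice of $B_{Y^*}$ meets $B_E$ in a nonempty weak$^*$ slice of $B_E$, and the weak$^*$ Daugavet property of $e_1^*$ in $E$ already suffices. Finally, your $\ell_1$-case justification (``Goldstine-type transfer'') is too vague to stand as a proof; the paper handles this step concretely by combining \cite[Theorem~2.1]{AALMPPV} with \cite[Proposition~5.1]{VeeorgFunc}, and you should do likewise.
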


In particular, we provide a positive answer to \cite[Question~6.1]{ALMP} where it was asked whether super-reflexive Banach spaces could contain Daugavet points; and a partial answer towards the negative to \cite[Question~7.7]{MPRZ} where it was asked whether an isomorphic obstruction for Daugavet or super Daugavet points could be found. We will also give a positive answer to \cite[Question~5.29]{AALMPPV} by constructing a non-reflexive M-embedded space with a super Daugavet point.

Some of the key ideas behind the construction from Theorem~\ref{intro-thm:super_renorming} actually came from the study of another diametral notion for points of the unit sphere, that has not yet been considered in the literature, but that is a natural complement to the notion of $\Delta$-points.

\begin{definition}\label{defn:nabla-points}

Let $X$ be a Banach space, and let $x\in S_X$. We say that $x$ is a \emph{$\nabla$-point} if $\sup_{y\in S}\norm{x-y}=2$ for every slice $S$ of $B_X$ that does not contain $x$.
    
\end{definition}

Clearly, a point $x$ in the unit sphere of a Banach space $X$ is a Daugavet point if and only if it is simultaneously a $\nabla$-point and a $\Delta$-point. We shall see in the following section that, in fact, $x$ is already a Daugavet point if it is a $\nabla$-point and a $\mathfrak{D}$-point.

With this in mind, our primary motivation for the introduction of the notion of $\nabla$-points was to look for new insights on the influence of Daugavet points for the geometry of Banach spaces. Indeed, the majority of the obstructions that have been recently produced in the literature for the existence of diametral points (e.g. in \cite{ALMP,VeeorgFunc,KLT,MPRZ,AALMPPV}) actually came from an obstruction to the various $\Delta$-conditions. Among the few exceptions where the considerations about absolute sums from \cite{AHLP}, and the distance 2 to denting points that was observed in \cite[Proposition~3.1]{JRZ2022}. We will state analogues of those results for $\nabla$-points, hence show that there are indeed cases where this notion is the one preventing the existence of Daugavet points. 

It is of course natural to ask whether there are $\nabla$-points that are not Daugavet points. The following easy but fundamental examples show that there are indeed plenty of those, and illustrate why the notion is, in a way, a bit too flexible to negate good geometric properties of Banach spaces. We will see other examples of that kind throughout the text. However, let us point out that the natural super version of the $\nabla$-condition would not provide a new notion, as one can easily show that any ``super $\nabla$-point" is automatically a super Daugavet point (this immediately follows from the fact that the weak topology is Hausdorff).

\begin{example}\label{expl:l1_nabla}

  The elements of the unit vector basis of $\ell_1^n$ and $\ell_1$ are all $\nabla$-points.
    
\end{example}

In particular, note that every slice of $B_{\ell_1^n}$ or $B_{\ell_1}$ contains a $\nabla$-point, while neither $\ell_1^n$ nor $\ell_1$ contains $\Delta$-points by \cite[Theorem~3.1]{AHLP} (although every element with infinite support in $S_{\ell_1}$ is a $\mathfrak{D}$-point by \cite[Proposition~2.3]{AHLP}). In view of those examples, one can wonder whether a Banach space whose unit sphere is entirely composed of $\nabla$-points has to contain Daugavet points. Our main result concerning $\nabla$-points is that this property is actually equivalent to the DPr.

\begin{theorem}\label{intro-thm:DPr=nabla-DPr}

    Let $X$ be a Banach space of dimension $\dim{X}>1$. Then $X$ has the DPr if and only if every point on its unit sphere is a $\nabla$-point.
    
\end{theorem}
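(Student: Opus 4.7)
The forward direction is immediate, since any Daugavet point is in particular a $\nabla$-point. For the converse, assume every point of $S_X$ is a $\nabla$-point, fix $x\in S_X$, and let $S=S(x^*,\alpha)$ be a slice of $B_X$. If $x\notin S$ the $\nabla$-property of $x$ immediately gives $\sup_{y\in S}\|x-y\|=2$, so I may assume $x\in S$, i.e., $x^*(x)>1-\alpha$. Given $\epsilon>0$, my overall plan is to exhibit a sub-slice $T\subseteq S$ together with a point $z\in S_X$ satisfying $\|z-x\|<\epsilon/2$ and $z\notin T$; then the $\nabla$-property at $z$ supplies $y\in T\subseteq S$ with $\|z-y\|>2-\epsilon/2$, hence $\|x-y\|>2-\epsilon$ by the triangle inequality.

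The easy case is when there exists $z\in S_X$ with $\|z-x\|<\epsilon/2$ and $x^*(z)<1$. (This happens in particular whenever $x^*(x)<1$, by taking $z=x$.) In that case I simply take $T:=S(x^*,\alpha')$ with $\alpha'\in\bigl(0,\min(\alpha,1-x^*(z))\bigr)$: then $T$ is a sub-slice of $S$ avoiding $z$, and the plan above concludes the argument.

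The main obstacle is the remaining case: $x^*(x)=1$ and there is $\delta>0$ such that $x^*(z)=1$ for every $z\in S_X$ with $\|z-x\|<\delta$. Equivalently, $x$ lies in the relative interior (within $S_X$) of the face $F:=\{z\in S_X:x^*(z)=1\}$, and shrinking $\alpha$ never ejects $x$ from the slice. To overcome this I use the hypothesis $\dim X>1$: the sphere $S_X$ has no isolated point, so I can pick $y_0\in F$ with $y_0\neq x$. Since $X^*$ separates points there is $h\in X^*$ with $h(x)<h(y_0)$, and the shifted functional $g:=h-\tfrac{h(x)+h(y_0)}{2}\,x^*$ satisfies $g(x)<0<g(y_0)$. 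For $0<\eta\leq \alpha/(|g(x)|/2+\|g\|)$ and $\beta:=\eta|g(x)|/2$, the set
\[
T:=\bigl\{w\in B_X:(x^*+\eta g)(w)>1-\beta\bigr\}
\]
is a genuine slice after renormalising $x^*+\eta g$, and one readily verifies the three required properties: $(x^*+\eta g)(x)=1-\eta|g(x)|<1-\beta$, so $x\notin T$; $(x^*+\eta g)(y_0)=1+\eta g(y_0)>1>1-\beta$, so $y_0\in T$ (in particular $T\neq\emptyset$ and its renormalisation is a bona fide slice); and for $w\in T$ one has $x^*(w)>1-\beta-\eta g(w)\geq 1-\beta-\eta\|g\|\geq 1-\alpha$, so $T\subseteq S$. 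The $\nabla$-property of $x$ applied to the slice $T$ then produces the required $y\in T\subseteq S$ with $\|x-y\|>2-\epsilon$. The crucial use of $\dim X>1$ is in securing the auxiliary $y_0$, around which the defining functional of $S$ can be tilted to push $x$ out of a sub-slice while remaining inside $S$.
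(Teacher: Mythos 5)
Your proof is correct, and it takes a genuinely different route from the paper's. The paper first shows that the hypothesis forces every slice of $B_X$ to have diameter $2$ (this is where $\dim X>1$ enters there: one finds a point $x\in S\cap S_X$ with $x^*(x)<1$ and applies the $\nabla$-condition to the sub-slice $S(x^*,\gamma)$ not containing it), concludes that $B_X$ has no strongly exposed points, and then invokes the previously established dichotomy (Theorem~\ref{thm:Nabla_not_str-exp_is_Daugavet}: a $\nabla$-point is either a Daugavet point or strongly exposed) together with Lemma~\ref{lem:Daugavet=nabla+D}. You instead argue directly at each point and each slice, with no appeal to the dichotomy: in the easy case you shrink the slice parameter to eject a nearby point $z$ and use the $\nabla$-property of $z$; in the hard case (where $x$ sits in the relative interior of the face $F=\{z\in S_X: x^*(z)=1\}$) you tilt the defining functional toward another point $y_0$ of the face -- whose existence is exactly where you use $\dim X>1$, via the absence of isolated points of $S_X$ -- producing a sub-slice $T\subseteq S$ that contains $y_0$ but not $x$, and apply the $\nabla$-property of $x$ itself. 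All the estimates check out, including the verification that $T$ is a genuine slice (nonempty with $\Vert x^*+\eta g\Vert>1-\beta$) and that $T\subseteq S$. Your tilting construction is close in spirit to the perturbation used in the paper's proof of Theorem~\ref{thm:Nabla_not_str-exp_is_Daugavet}, but your overall argument is self-contained and more elementary, at the cost of the case analysis; the paper's route is shorter here because it reuses the structural results about individual $\nabla$-points, which are of independent interest elsewhere in the text.
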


Let us end the present section by giving a few words about the organization and the content of the paper. In Section~\ref{section:nabla_DPr}, we make a general study of the notion of $\nabla$-points. We start by proving that any point that is simultaneously a $\nabla$-point and  a $\mathfrak{D}$-point is a Daugavet-point, and deduce that a $\nabla$-point is always either a Daugavet point or a strongly exposed point. Building on these results, we prove Theorem~\ref{intro-thm:DPr=nabla-DPr} and obtain a new characterization of the DPr in terms of $\nabla$-points. Finally, we state a specific property of distance to denting points for $\nabla$-points, and observe that any strictly convex space which contains a $\nabla$-point fails the \RNP.

In Section~\ref{section:examples}, we study $\nabla$-points in some classical Banach spaces. In Subsection~\ref{subsec:absolute_sums}, we look at $\nabla$-points in absolute sums of Banach spaces. Fist, we prove that if $N$ is an absolute normalized norm on $\R^2$ different from the $\ell_1$-norm and the $\ell_\infty$-norm, then the notions of $\nabla$-points and of Daugavet points coincide in the absolute sum $X\oplus_N Y$ for arbitrary Banach spaces $X$ and $Y$. Second, we make a specific study of $\nabla$-points in $\ell_1$-sums and $\ell_\infty$-sums of Banach spaces, and prove that in order for a point $(x,y)$ in $X\oplus_N Y$ to be a $\nabla$-point without being a Daugavet point, we must have either $N=\norm{\cdot}_1$ and $(\norm{x},\norm{y})=(1,0)$ or $(0,1)$, or $N=\norm{\cdot}_\infty$ and $(\norm{x},\norm{y})=(1,1)$. Last, we provide a complete characterization of those points in this context.
In Subsection~\ref{subsection:function-spaces}, we consider $\nabla$-points in some specific function spaces. On the one hand, we show that $L_1(\mu)$-spaces provide a natural framework for the existence of $\nabla$-points that are not Daugavet points. On the other, we show that the notion of $\nabla$-points coincides with all the other diametral notions in every infinite dimensional $C(K)$-space or $C_0(L)$-space, as well as in general $L_1$-preduals. In Subsection~\ref{subsec:tensor_products}, we present a few non-trivial applications of the notion of $\nabla$-points in tensors products, where known transfer results for $\Delta$-points can be combined with properties of $\nabla$-points to get new transfer results for Daugavet points. Finally, we provide partial specific transfer results for $\nabla$-points, and start an investigation of the stability of super points.

In Section~\ref{section:renormings}, we deal with Daugavet renormings. First we prove that it naturally follows from some of the transfer results through $\ell_1$-sums that every Banach space can be renormed with a $\nabla$-point. Second, we combine this idea with an adapted version of the renorming from \cite[Section~3]{AALMPPV} to get the main result of the text, Theorem~\ref{intro-thm:super_renorming}. Last, we observe that if a Banach space $X$ contains a complemented subspace $Y$ that can be renormed with a Daugavet or a super Daugavet point, then $X$ can also be renormed with a Daugavet or a super Daugavet point. Based on this, we combine Theorem~\ref{intro-thm:super_renorming} with classic results from James about unconditional bases and the $\ell_1$-isomorphism from \cite{AALMPPV} to get that every infinite dimensional Banach space with an unconditional basis (and more generally with a complemented unconditional basic sequence) can be renormed with a Daugavet point. 

Throughout the text, we will use standard notation and will usually follow the textbooks \cite{AlbiacKalton} or \cite{Czechbook}. In particular, if $A$ is a non-empty subset of a normed space $X$, then we will denote respectively by $\spn{A}$ and $\conv{A}$ the linear span and the convex hull of $A$; and by $\cspan{A}$ and $\cconv{A}$ their respective closure. We will also denote by $\ext{B_X}$ the set of all extreme points of $B_X$, and by $\dent{B_X}$ the set of all denting points of $B_X$.


\section{\texorpdfstring{$\nabla$}{Nabla}-points and the Daugavet property}\label{section:nabla_DPr} 

We start with the following easy observations.

\begin{lemma}\label{lem:Daugavet=nabla+D}

    Let $X$ be a Banach space and let $x\in S_X$. Then $x$ is a Daugavet point if and only if it is simultaneously a $\nabla$-point and a $\mathfrak{D}$-point. 

\end{lemma}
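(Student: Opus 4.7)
The plan is to dispatch this as two implications, where the forward direction is essentially trivial and the reverse requires a short case analysis on the behaviour of the functional defining a slice at the point $x$.

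For the ``only if'' direction, I would simply observe that if $x$ is a Daugavet point, then by definition every slice of $B_X$ (in particular every slice not containing $x$, and every slice of the form $S(x^*,\alpha)$ with $x^*\in D(x)$) satisfies $\sup_{y\in S}\norm{x-y}=2$. This is immediate from the inclusions among the families of slices involved.

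For the ``if'' direction, I would fix an arbitrary slice $S=S(x^*,\alpha)$ with $x^*\in S_{X^*}$ and $\alpha>0$, and split into three cases according to the value of $x^*(x)$. If $x\notin S$, i.e. $x^*(x)\leq 1-\alpha$, then the $\nabla$-point hypothesis directly gives $\sup_{y\in S}\norm{x-y}=2$. If $x\in S$ and $x^*(x)=1$, then $x^*\in D(x)$ by definition, so the $\mathfrak{D}$-point hypothesis applies and yields the same conclusion. The only case requiring a small idea is when $x\in S$ but $x^*(x)<1$, so that $x^*\notin D(x)$ and neither hypothesis applies verbatim.

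To handle this last case, I would set $\beta:=(1-x^*(x))/2$, which is strictly positive by assumption and strictly smaller than $\alpha$ since $x\in S$ forces $1-x^*(x)<\alpha$. The smaller slice $S(x^*,\beta)$ is then a non-empty subset of $S$ (non-empty because $\norm{x^*}=1$), and it does not contain $x$, because $x^*(x)=1-2\beta<1-\beta$. Applying the $\nabla$-point hypothesis to $S(x^*,\beta)$ yields $\sup_{y\in S(x^*,\beta)}\norm{x-y}=2$, and monotonicity of the supremum gives the same for $S$. Combining the three cases, $x$ is a Daugavet point. The only ``obstacle'' worth flagging is the separation trick in the third case, which is just the observation that a functional not attaining its norm at $x$ allows one to strictly separate $x$ from a co-final family of sub-slices.
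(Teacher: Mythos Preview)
Your proof is correct and follows essentially the same approach as the paper: both dispatch the forward direction trivially and, for the converse, handle the case $x^*\notin D(x)$ by shrinking the slice $S(x^*,\alpha)$ to a sub-slice $S(x^*,\beta)$ that excludes $x$ and then invoking the $\nabla$-condition. The only cosmetic difference is that the paper organizes the case split around whether $x^*\in D(x)$ first, whereas you split first on whether $x\in S$; the underlying idea is identical.
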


\begin{proof}

    Clearly, a Daugavet point is simultaneously a $\nabla$-point and a $\Delta$-point, hence a $\nabla$-point and a $\mathfrak{D}$-point. Conversely, assume that $x$ is simultaneously a $\nabla$-point and a $\mathfrak{D}$-point, and let $S:=S(x^*,\alpha)$ with $x^*\in S_{X^*}$ and $\alpha>0$. If $x^*\in D(x)$, then $\sup_{y\in S}\norm{x-y}=2$ because $x$ is a $\mathfrak{D}$-point. On the other hand, if $x^*\notin D(x)$, then there exists $\beta>0$ such that $x^*(x)\leq1-\beta$. If $\alpha\leq \beta$, then $x\notin S$,  and $\sup_{y\in S}\norm{x-y}=2$ because $x$ is a $\nabla$-point. If $\alpha>\beta$, then $S(x^*,\beta)\subset S$, so $\sup_{y\in S}\norm{x-y}\geq \sup_{y\in S(x^*,\beta)}\norm{x-y}=2$ by the previous case. It follows that $x$ is a Daugavet point. 
    
\end{proof}

\begin{proposition}\label{prop:closed_nabla-set}

    Let $X$ be a Banach space. The set of all $\nabla$-points in $X$ is a closed subset of $X$. 
    
\end{proposition}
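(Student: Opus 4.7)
The plan is to prove closedness sequentially: if $(x_n)$ is a sequence of $\nabla$-points converging in norm to some $x\in X$, then $x$ must be a $\nabla$-point. Note first that each $x_n\in S_X$ and $S_X$ is norm-closed, so $x\in S_X$.

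The key idea is a small shrinking of slices: given a slice $S=S(x^*,\alpha)$ that does not contain $x$ (so $x^*(x)\leq 1-\alpha$), I would like to apply the $\nabla$-property of the $x_n$ to a slice which is contained in $S$ and which avoids $x_n$ for $n$ large. To achieve this, I would fix a small $\eps\in(0,\alpha)$ and consider the smaller slice $S'=S(x^*,\alpha-\eps)$. Three easy verifications are then needed: (a) $S'$ is non-empty since $\norm{x^*}=1$, so $\sup_{y\in B_X}x^*(y)=1$; (b) $S'\subseteq S$ by construction; (c) $x\notin S'$ because $x^*(x)\leq 1-\alpha<1-(\alpha-\eps)$.

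Next, the convergence $x_n\to x$ gives $x^*(x_n)\to x^*(x)\leq 1-\alpha$, so for $n$ sufficiently large one has simultaneously $x^*(x_n)\leq 1-(\alpha-\eps)$ and $\norm{x-x_n}<\eps$. In particular $x_n\notin S'$, and the $\nabla$-property of $x_n$ applied to $S'$ provides $y_n\in S'$ with $\norm{x_n-y_n}>2-\eps$. The triangle inequality then yields
\begin{equation*}
    \sup_{y\in S}\norm{x-y}\geq \norm{x-y_n}\geq \norm{x_n-y_n}-\norm{x-x_n}>2-2\eps,
\end{equation*}
since $y_n\in S'\subseteq S$. As $\eps>0$ was arbitrary, $\sup_{y\in S}\norm{x-y}=2$, and $x$ is a $\nabla$-point.

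There is no real obstacle: the argument is routine, the only subtlety being to arrange the shrinking parameter $\eps$ so that the slice $S'$ is non-empty and avoids both $x$ (immediately, from $x^*(x)\leq 1-\alpha$) and eventually all $x_n$ (from norm convergence). The same shrinking trick is the only thing preventing one from simply intersecting $S$ with the $\nabla$-condition for $x_n$ directly, since a priori $x_n$ might belong to $S$ even when $x$ does not.
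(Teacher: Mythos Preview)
Your proof is correct and follows essentially the same route as the paper: a sequential argument combined with the triangle inequality. Your slice-shrinking to $S'=S(x^*,\alpha-\eps)$ is in fact slightly more careful than the paper's version, which directly asserts that one can find $n$ with $x_n\notin S$; your shrinking handles the boundary case $x^*(x)=1-\alpha$ explicitly, whereas the paper leaves this implicit.
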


\begin{proof}
    The verification is immediate by the definition of $\nabla$-points. Indeed, assume that $(x_n)$ is a sequence of $\nabla$-points in $X$ which converges to some $x\in X$. Consider a slice $S$ of $B_X$ which does not contain $x$ and let $\varepsilon>0$. Since $x_n\to x$, we can find $n\in\N$ such that $x_n\notin S$ and $\|x-x_n\|\leq \eps/2$. Then as $x_n$ is a $\nabla$-point, there exists $y\in S$ such that $\|x_n-y\|\geq 2-\eps/2$. Thus $\|x-y\|\geq 2-\eps$, and it follows $x$ is also a $\nabla$-point.
    
\end{proof}

We will now show that a $\nabla$-point is always either a Daugavet point or a strongly exposed point of the unit ball. 

\begin{theorem}\label{thm:Nabla_not_str-exp_is_Daugavet}
    Let $X$ be a Banach space, and let $x\in S_X$ be a $\nabla$-point. Then either $x$ is a Daugavet point, or $x$ is a strongly exposed point of $B_X$.
\end{theorem}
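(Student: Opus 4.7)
The plan is to establish the contrapositive: if $x \in S_X$ is a $\nabla$-point that is \emph{not} strongly exposed, then $x$ must be a Daugavet point. By Lemma~\ref{lem:Daugavet=nabla+D}, it suffices to verify that $x$ is a $\mathfrak{D}$-point, so I will fix an arbitrary $x^* \in D(x)$ and an arbitrary $\alpha > 0$, and then produce slices inside $S(x^*, \alpha)$ on which the $\nabla$-condition at $x$ can be activated.

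Since $x$ is not strongly exposed and $x^* \in D(x)$, the functional $x^*$ fails to strongly expose $x$. Hence there exist $\varepsilon > 0$ and a sequence $(y_n) \subset B_X$ with $x^*(y_n) \to 1$ and $\norm{y_n - x} \geq \varepsilon$ for every $n$. Using Hahn--Banach, I select, for each $n$, a norming functional $u_n^* \in S_{X^*}$ of $y_n - x$, so that $u_n^*(y_n) - u_n^*(x) \geq \varepsilon$. The central construction is then the small convex perturbation $v_n^* := (1 - \lambda) x^* + \lambda u_n^*$, where $\lambda \in (0, 1)$ is to be chosen small, depending only on $\alpha$ and $\varepsilon$ but not on $n$. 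A direct estimate gives $v_n^*(y_n) - v_n^*(x) \geq (1 - \lambda)(x^*(y_n) - 1) + \lambda \varepsilon$, which is strictly positive for all sufficiently large $n$, so $T_n := \{w \in B_X : v_n^*(w) > v_n^*(x)\}$ is a non-empty set containing $y_n$ but not $x$, and after normalising $v_n^*$ (which is non-zero since $\norm{v_n^*} \geq v_n^*(y_n) > v_n^*(x)$) one sees that $T_n$ is a slice in the sense of the paper.

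The crux is then to show that, for $\lambda$ small enough (a tracking of the constants gives $\lambda \leq \alpha/(2 + \alpha)$), one has $T_n \subset S(x^*, \alpha)$. This follows from combining the trivial upper bound $v_n^*(w) \leq (1 - \lambda) x^*(w) + \lambda$ with the lower bound $v_n^*(x) \geq 1 - 2\lambda$, which force every $w \in T_n$ to satisfy $x^*(w) > (1 - 3\lambda)/(1 - \lambda) \geq 1 - \alpha$. Once the inclusion $T_n \subset S(x^*, \alpha)$ and the exclusion $x \notin T_n$ are both in hand, the $\nabla$-point hypothesis applied to the slice $T_n$ gives $\sup_{w \in T_n} \norm{x - w} = 2$, whence $\sup_{y \in S(x^*, \alpha)} \norm{x - y} = 2$. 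Since $x^* \in D(x)$ and $\alpha > 0$ were arbitrary, $x$ is a $\mathfrak{D}$-point, and Lemma~\ref{lem:Daugavet=nabla+D} concludes.

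The main difficulty lies in the simultaneous requirements on the slice $T_n$: it must separate $x$ from the bad sequence $(y_n)$ (forcing a perturbation of $x^*$), yet lie inside the prescribed target slice $S(x^*, \alpha)$ (forcing the perturbation to be small in a quantitative sense). Balancing these two constraints by a convex combination with a small weight $\lambda$ is the heart of the argument; once that is in place, the $\nabla$-point hypothesis effortlessly boosts the a priori separation distance $\varepsilon$ all the way up to the maximal value $2$.
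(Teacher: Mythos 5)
Your proof is correct and follows essentially the same route as the paper's: reduce to the $\mathfrak{D}$-point condition via Lemma~\ref{lem:Daugavet=nabla+D}, then apply the $\nabla$-hypothesis to a slice that excludes $x$ but sits inside $S(x^*,\alpha)$, obtained by convex-combining $x^*$ with a functional separating $x$ from a far point of a thin slice of $x^*$. Your constant tracking checks out (the bound $x^*(w)>(1-3\lambda)/(1-\lambda)\geq 1-\alpha$ does hold precisely when $\lambda\leq\alpha/(2+\alpha)$), and this uniform choice of $\lambda$ is in fact slightly cleaner than the implicit equation the paper solves for its $\lambda$.
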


\begin{proof}
    Assume that $x$ is not a strongly exposed point of $B_X$. To prove that $x$ is a Daugavet point, it suffices by Lemma~\ref{lem:Daugavet=nabla+D} to show that $x$ is a $\mathfrak{D}$-point. 
    So fix $\alpha,\eps>0$ and $x^*\in D(x)$, and let us assume as we may that $\alpha< 1/2$. We want to show that there exists  $y\in S(x^\ast,\alpha)$ such that $\|x-y\|\ge 2-\varepsilon$.
    By assumption, there exist $\beta\in(0,\alpha)$ such that $\diam\big(S(x^*,\gamma)\big)>4\beta$ for every $\gamma>0$. So let $\gamma>0$ be such that 
    \begin{equation*}
        1-\frac{\alpha-\gamma}{2}<\frac{\beta}{\gamma+\beta},
    \end{equation*}
    and pick $z\in B_X$ such that $x^\ast(z)>1-\gamma$ and $\|x-z\|\ge 2\beta$. Then let $y^\ast\in \frac12 S_{X^\ast}$ be such that $y^\ast(z)-y^\ast(x)\ge \beta$, and fix $\lambda\in (0,1)$ such that 
    \begin{equation*}
    \lambda (1+\alpha-\gamma)+(1-\lambda)y^\ast(z)=1.
    \end{equation*}
    Note that 
    \begin{equation*}
        \lambda=\frac{1-y^*(z)}{1+\alpha-\gamma-y^*(z)}=1-\frac{\alpha-\gamma}{1+\alpha-\gamma-y^*(z)}<1-\frac{\alpha-\gamma}{2}<\frac{\beta}{\gamma+\beta}.
    \end{equation*}
    Set $z^\ast=\lambda x^\ast+(1-\lambda)y^\ast$.
    Then 
    \begin{align*}
        z^*(x)&=\lambda x^\ast(x)+(1-\lambda)y^\ast(x)\le\lambda +(1-\lambda)(y^*(z)-\beta)\\
        &=\lambda+1-\lambda(1+\alpha-\gamma)-(1-\lambda)\beta=1-\lambda\alpha+\lambda\gamma+\lambda \beta-\beta<1-\lambda\alpha
    \end{align*}
    and 
    \begin{equation*}z^*(z)=\lambda x^*(z)+(1-\lambda)y^*(z)>\lambda(1-\gamma)+1-\lambda(1+\alpha-\gamma)=1-\lambda\alpha.\end{equation*}
    Since $x$ is a $\nabla$-point, there exists $y\in B_X$ such that $z^*(y)>1-\lambda\alpha$ and $\|x-y\|\ge 2-\varepsilon$. Now $y\in S(x^\ast,\alpha)$ because
    \begin{equation*}\lambda x^*(y)=z^*(y)-(1-\lambda)y^*(y)>(1-\lambda\alpha)-(1-\lambda)=\lambda(1-\alpha),\end{equation*} so we are done.
\end{proof}

Using the previous result, we can now prove that main theorem of the section.

\begin{theorem}\label{thm:DPr=nabla-DPr}

    A Banach space $X$ of dimension $\dim{X}>1$ has the DPr if and only if every element on its unit sphere is a $\nabla$-point.
    
\end{theorem}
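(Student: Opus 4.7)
The forward direction is immediate from the standard fact that the Daugavet property is equivalent to every $x\in S_X$ being a Daugavet point, which is a stronger condition than being a $\nabla$-point.

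For the converse, I would proceed by contradiction. Assume every point of $S_X$ is a $\nabla$-point, and suppose some $x\in S_X$ is not a Daugavet point. By Theorem~\ref{thm:Nabla_not_str-exp_is_Daugavet}, $x$ must then be a strongly exposed point of $B_X$; in particular, $x$ is a denting point, so slices of $B_X$ containing $x$ of arbitrarily small diameter are available. Fix $\varepsilon=1/4$ and pick a slice $S=S(x^*,\alpha)$ with $x\in S$ and $\diam S<\varepsilon$.

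Now I would exploit the $\nabla$-property at arbitrary points of $S_X\setminus S$. For any such $y$, since $y\notin S$, Definition~\ref{defn:nabla-points} gives $\sup_{z\in S}\|y-z\|=2$. Combining the triangle inequality with $x\in S$ yields
\begin{equation*}
\|y-z\|\leq\|y-x\|+\|x-z\|\leq\|y-x\|+\diam S
\end{equation*}
for every $z\in S$, and taking the supremum forces $\|y-x\|\geq 2-\diam S>7/4$. Contrapositively, every $y\in S_X$ with $\|y-x\|\leq 7/4$ must actually lie in $S$.

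The contradiction then comes from producing $y\in S_X$ with $\|y-x\|=1$. Since $\dim X>1$, the unit sphere $S_X$ is path-connected, so applying the intermediate value theorem to the continuous map $z\mapsto\|z-x\|$ along a path in $S_X$ joining $x$ (value $0$) to $-x$ (value $2$) produces such a $y$. The previous paragraph forces $y\in S$, but $x\in S$ and $\diam S<1/4$ give $\|y-x\|<1/4<1$, a contradiction. I expect the conceptual core to be precisely the observation that strong exposedness supplies arbitrarily small slices around $x$, and this combined with the $\nabla$-condition at every other unit vector compresses all vectors at moderate distance from $x$ into a tiny slice---an impossibility once $\dim X>1$.
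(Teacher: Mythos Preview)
Your proof is correct. Both your argument and the paper's rely on Theorem~\ref{thm:Nabla_not_str-exp_is_Daugavet} to reduce the problem to ruling out strongly exposed points of $B_X$, but you reach that conclusion by a genuinely different route. The paper proceeds \emph{directly}: given any slice $S(x^*,\alpha)$, it uses $\dim X>1$ to pick $x\in S\cap S_X$ with $x^*(x)<1$, passes to a sub-slice $S(x^*,\gamma)$ not containing $x$, and applies the $\nabla$-condition \emph{at $x$} to force $\diam S=2$; this establishes the LD2P outright, hence no strongly exposed points exist. You instead argue \emph{by contradiction}: assuming a strongly exposed point $x$, you take a small slice $S$ around it and apply the $\nabla$-condition \emph{at points outside $S$} to show that the entire metric neighbourhood $\{y\in S_X:\|y-x\|\le 7/4\}$ is trapped inside $S$, which path-connectedness of $S_X$ (your use of $\dim X>1$) makes impossible. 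The paper's approach is slightly more economical and yields the LD2P as an explicit intermediate step; your approach is a nice illustration that the $\nabla$-hypothesis forces a metric rigidity on the sphere that is incompatible with small slices.
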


\begin{proof}
    Since every Daugavet point is $\nabla$-point, one implication is clear. So assume that every element $x\in S_X$ is a $\nabla$-point. In order to get that $X$ has the DPr, it now suffices  to show that $X$ has the LD2P (i.e. that every slice of $B_X$ has diameter two). Indeed, $B_X$ would then have no strongly exposed point, and by Theorem~\ref{thm:Nabla_not_str-exp_is_Daugavet} all unit sphere elements would be Daugavet points.
    
    So let $S:=S(x^*,\alpha)$ be a slice of the unit ball. As $\dim{X}>1$, there exists $x\in S\cap S_X$ such that $x^*(x)<1$. Let $\gamma\in (0,\alpha)$ be such that $x^*(x)<1-\gamma$. Then $x\notin S(x^*,\gamma)$, and thus $\sup_{y\in S(x^*,\gamma)}\norm{x-y}=2$ because $x$ is a $\nabla$-point. In particular, since $S(x^*,\gamma)\subset S$ and $x\in S$,  we get that $S$ has diameter 2, and $X$ has the LD2P, hence the DPr.
\end{proof}

\begin{remark}

    Analogously to \cite[Lemma~2.2]{AHLP}, one can easily prove that a point $x$ in the unit sphere of a Banach space $X$ is a $\nabla$-point if and only if the operator $T:=x^*\otimes x$ satisfies the Daugavet equation $\norm{Id-T}=2$ for every $x^*\in S_{X^*}$ that does not belong to $D(x)$. So  Theorem~\ref{thm:DPr=nabla-DPr} can be rephrased as: A Banach space of dimension greater than or equal to 2 has the DPr if and only if every rank-one norm-one operator that is not a projection satisfies the Daugavet equation. 
\end{remark}

We end the section by stating the following straightforward $\nabla$-analogue to \cite[Proposition~3.1]{JRZ2022} and by collecting a few applications of this result. 

\begin{proposition}\label{prop:nabla_distance_2_to_denting}
    Let $X$ be a Banach space and let $x\in S_X$ be a $\nabla$-point. Then $\|x-y\|=2$ for every denting point $y$ of $B_X$ with $y\neq x$.
\end{proposition}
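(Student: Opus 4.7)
The plan is to exploit the defining property of denting points together with the fact that $x$ is a $\nabla$-point, by finding slices around $y$ which are simultaneously very small in diameter and miss $x$. Since $y$ is a denting point, for any $\delta>0$ there exists a slice $S$ of $B_X$ with $y\in S$ and $\diam(S)<\delta$. The key observation is that, because $y\neq x$ and $\|x-y\|>0$, taking $\delta<\|x-y\|$ forces $x\notin S$, so the $\nabla$-property of $x$ applies to $S$.

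More concretely, given any $\eps>0$ I would first set $r:=\|x-y\|>0$ and choose a denting slice $S$ around $y$ with $\diam(S)<\min(r,\eps/2)$. The inequality $\diam(S)<r$ guarantees $x\notin S$ (otherwise $\|x-y\|\le\diam(S)<\|x-y\|$), so by the definition of a $\nabla$-point one can pick $z\in S$ with $\|x-z\|>2-\eps/2$. The triangle inequality then yields
\begin{equation*}
    \|x-y\|\ge \|x-z\|-\|z-y\|>\bigl(2-\tfrac{\eps}{2}\bigr)-\diam(S)>2-\eps.
\end{equation*}
Letting $\eps\to0$ and using $\|x-y\|\le 2$ gives the desired equality $\|x-y\|=2$.

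I do not anticipate any real obstacle: the argument is a clean two-line combination of the two definitions, and is indeed the exact $\nabla$-analogue of \cite[Proposition~3.1]{JRZ2022}. The only subtlety is the bootstrap step of choosing $\delta$ small enough in terms of the (unknown but positive) quantity $\|x-y\|$ to ensure $x\notin S$, which is what makes the $\nabla$-condition (rather than the strictly stronger Daugavet or $\Delta$-condition) sufficient here.
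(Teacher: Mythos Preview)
Your proof is correct and is exactly the straightforward argument the paper has in mind; indeed, the paper does not write out a proof at all but simply presents the proposition as ``the following straightforward $\nabla$-analogue to \cite[Proposition~3.1]{JRZ2022}''. The only ingredient beyond the cited Daugavet-point argument is precisely the bootstrap step you single out, namely shrinking the denting slice below $\|x-y\|$ to force $x\notin S$ so that the $\nabla$-condition applies.
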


In \cite{Kadets96}, an example of a strictly convex normed space with the DPr was provided. However, this space is not complete, and it is still an open question whether there exists a strictly convex Banach space with the DPr.  It was also asked in \cite[Question~7.5]{MPRZ} whether one could provide a strictly convex Banach space with a Daugavet point. In the following result, we observe that it immediately follows from Proposition~\ref{prop:nabla_distance_2_to_denting} that in order for a strictly convex space to contain a $\nabla$-point, its unit ball must contain no denting point other than the point itself and its opposite. In particular, strictly convex spaces with the \RNP\ cannot contain $\nabla$-points. 

\begin{corollary}

    Let $X$ be a strictly convex space and let $x\in S_X$. If $x$ is a $\nabla$-point, then $\dent{B_X}\subset\{\pm x\}$. 
    
\end{corollary}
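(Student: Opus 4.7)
The plan is to combine Proposition~\ref{prop:nabla_distance_2_to_denting} with the definition of strict convexity. Fix an arbitrary denting point $y\in\dent B_X$ with $y\ne x$; the goal is to show that $y=-x$, which will immediately give the inclusion $\dent B_X\subset\{\pm x\}$.

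First, I would apply Proposition~\ref{prop:nabla_distance_2_to_denting}: since $x$ is a $\nabla$-point and $y$ is a denting point of $B_X$ distinct from $x$, it follows that $\|x-y\|=2$. Next, observe that both $x$ and $-y$ lie on $S_X$ and satisfy $\|x+(-y)\|=\|x-y\|=2$. Strict convexity of $X$ says precisely that whenever $u,v\in S_X$ satisfy $\|u+v\|=2$, one must have $u=v$. Applied to $u=x$ and $v=-y$, this forces $x=-y$, i.e.\ $y=-x$, which completes the argument.

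There is no real obstacle here: the statement is a two-line consequence of the previous proposition, and the only minor point to bear in mind is that $x$ itself is not excluded from $\dent B_X$ by this reasoning (the hypothesis $y\ne x$ in Proposition~\ref{prop:nabla_distance_2_to_denting} is what prevents us from concluding anything about whether $x\in\dent B_X$), which is why the inclusion is stated as $\dent B_X\subset\{\pm x\}$ rather than $\dent B_X\subset\{-x\}$.
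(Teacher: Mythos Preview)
Your proof is correct and follows essentially the same approach as the paper: both apply Proposition~\ref{prop:nabla_distance_2_to_denting} to get $\|x-y\|=2$ for any denting point $y\neq x$, and then invoke strict convexity to conclude $y=-x$. The paper phrases the strict convexity step as the set identity $\{y\in B_X:\|x-y\|=2\}=\{-x\}$, whereas you unpack it via the equivalent condition on $\|u+v\|=2$; these are the same argument.
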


\begin{proof}
    Since $X$ is strictly convex, we have that \begin{equation*}
        \{y\in B_X:\ \norm{x-y}=2\}=\{-x\}
    \end{equation*}  for every $x\in S_X$. So if $x$ is a $\nabla$-point, then it follows from Proposition~\ref{prop:nabla_distance_2_to_denting} that the only possible denting points in $B_X$ are $\pm x$.
\end{proof}

Recall that a Banach space $X$ is \emph{weakly midpoint locally uniformly rotund} (wMLUR) if every point in $S_X$ is weakly strongly extreme, a.k.a. preserved extreme. In \cite{AHNTT}, MLUR hence wMLUR Banach spaces with the DD2P were constructed. We do not know whether any of the spaces $X_D$ from \cite[Theorem~2.3]{AHNTT} contains a Daugavet point, but let us point out  that in this context, it would be enough to show that one of these contains a $\nabla$-point.

\begin{proposition}\label{prop:wMLUR_nabla}

Let $X$ be an infinite dimensional wMLUR Banach space. Then every $\nabla$-point in $X$ is super Daugavet.

\end{proposition}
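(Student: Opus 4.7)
The plan is to reduce the statement to the formally weaker ``super $\nabla$'' condition, namely that $\sup_{y\in V}\|x-y\|=2$ for every non-empty relatively weakly open subset $V$ of $B_X$ with $x\notin V$. As observed just before Example~\ref{expl:l1_nabla}, a point satisfying this condition is automatically a super Daugavet point: given an arbitrary non-empty weakly open $V\subset B_X$, I pick $y_0\in V\setminus\{x\}$ (a non-empty relatively weakly open subset of $B_X$ cannot reduce to a point), separate $y_0$ from $x$ by a weakly open $W$ using the Hausdorff property of the weak topology, and then apply the super $\nabla$ condition to the non-empty weakly open set $V\cap W\subset B_X$, which avoids $x$.

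I therefore fix a non-empty relatively weakly open $V\subset B_X$ with $x\notin V$, and aim to locate a slice $S$ of $B_X$ with $S\subset V$. Once this is done, $S$ automatically avoids $x$, and the $\nabla$-point property of $x$ immediately yields $\sup_{y\in V}\|x-y\|\geq\sup_{y\in S}\|x-y\|=2$, as required.

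To find such a slice I would proceed in two steps. First, I would exploit the infinite-dimensionality of $X$ to produce a point $y_0\in V\cap S_X$: writing $V=U\cap B_X$ with $U$ weakly open in $X$ and starting from any element of $V$, the set $U$ contains a translate of a closed subspace of $X$ of finite codimension, which is therefore infinite-dimensional; the intermediate value theorem applied to the norm along a half-line in this affine subspace yields the desired element of $S_X\cap V$. Second, since $X$ is wMLUR, the point $y_0$ is preserved extreme, i.e., $y_0\in\ext B_{X^{**}}$. Since the weak topology on $X$ coincides with the restriction of the weak$^\ast$ topology on $X^{**}$, the set $V$ lifts canonically to a relatively weak$^\ast$-open neighborhood $V^{**}$ of $y_0$ in $B_{X^{**}}$ whose trace on $B_X$ equals $V$. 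Applying Choquet's lemma to the weak$^\ast$-compact convex set $B_{X^{**}}$ at the extreme point $y_0$ then produces a weak$^\ast$-slice of $B_{X^{**}}$ at $y_0$ contained in $V^{**}$, and intersecting it with $B_X$ delivers the desired slice $S$ of $B_X$ with $y_0\in S\subset V$.

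The only slightly delicate point is the careful passage between the weak topology on $X$ and the weak$^\ast$ topology on $X^{**}$ required to legitimately invoke Choquet's lemma at $y_0\in\ext B_{X^{**}}$; otherwise, each ingredient (the sphere lemma in infinite dimension, Choquet's lemma on weak$^\ast$-compact convex sets, and the $\nabla$-property itself) is standard, and the argument amounts to assembling them correctly.
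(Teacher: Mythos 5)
Your argument is correct and is essentially the paper's own proof, merely with the details unpacked: the paper likewise invokes Choquet's lemma at preserved extreme points (all of $S_X$ by wMLUR) to place a slice inside any relatively weakly open set avoiding $x$, concludes that $x$ is a ``super $\nabla$-point'', and then upgrades to super Daugavet via Hausdorffness of the weak topology. The two technical points you flag (finding a unit-sphere point in $V$ via infinite-dimensionality, and passing through $B_{X^{**}}$ with the weak$^*$ topology to apply Choquet's lemma) are exactly the steps the paper leaves implicit, and you handle them correctly.
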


\begin{proof}

    By Choquet's lemma, slices form bases of neighborhoods  in the relative weak topology for the preserved extreme points of the unit ball of any given Banach space. It immediately follows that every $\nabla$-point in a wMLUR space $X$ is a ``super $\nabla$-point", hence, as was previously observed, a super Daugavet point.
    
\end{proof}

Finally, recall that in a Banach space with the \RNP, every slice contains a denting point. Thus we immediately get, in those spaces, the following characterization for $\nabla$-points.

\begin{proposition}\label{prop:nabla-characterization_RNP}
    Let $X$ be a Banach space with the \RNP, and let $x\in S_X$. Then $x$ is a $\nabla$-point in $X$ if and only if $\|x-y\|=2$ for every denting point $y$ of $B_X$ with $y\neq x$.
\end{proposition}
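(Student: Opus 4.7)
The plan is to combine Proposition~\ref{prop:nabla_distance_2_to_denting} with a standard consequence of the \RNP. Both directions reduce to well-known facts, so no new technical input is required.

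For the forward implication, there is nothing to do beyond observing that Proposition~\ref{prop:nabla_distance_2_to_denting} already gives it in arbitrary Banach spaces (it does not use the \RNP). So if $x$ is a $\nabla$-point, then $\|x-y\|=2$ for every denting point $y\neq x$ of $B_X$.

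For the converse, the key ingredient I would invoke is the classical fact that in a Banach space with the \RNP, every non-empty slice of $B_X$ contains a denting point of $B_X$. This follows from $B_X=\cconv(\dent B_X)$: given a slice $S(x^\ast,\alpha)$, pick any $z\in B_X$ with $x^\ast(z)>1-\alpha/2$, approximate $z$ in norm by convex combinations of denting points of $B_X$, and observe that at least one such denting point appearing in a sufficiently close approximation must itself have $x^\ast$-value greater than $1-\alpha$, hence lie in $S(x^\ast,\alpha)$.

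Granted that, the proof is a one-line argument: let $S$ be a slice of $B_X$ with $x\notin S$; by the \RNP\ there exists a denting point $y\in S$; since $x\notin S$ we have $y\neq x$; by hypothesis $\|x-y\|=2$, hence $\sup_{z\in S}\|x-z\|=2$, which shows that $x$ is a $\nabla$-point. There is no real obstacle to overcome; the only thing to make sure of is to cite (or briefly justify) the \RNP\ characterization that produces a denting point inside every slice.
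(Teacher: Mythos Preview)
Your proposal is correct and matches the paper's approach exactly: the paper does not even give a formal proof, but simply recalls that in a space with the \RNP\ every slice of $B_X$ contains a denting point, and states that the characterization follows immediately from this together with Proposition~\ref{prop:nabla_distance_2_to_denting}. Your write-up spells out precisely this argument.
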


Analogously to Daugavet points (see \cite[Theorem~2.1]{VeeorgStudia}), this characterization does also hold in every Lipschitz-free space.

\begin{proposition}
    Let $M$ be a metric space and let $\mu\in S_{\mathcal{F}(M)}$. Then $\mu$ is a $\nabla$-point if and only if $\|\mu-\nu\|=2$ for every denting point $\nu$ of $B_{\mathcal{F}(M)}$ with $\nu\neq \mu$.
\end{proposition}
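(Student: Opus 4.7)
The plan is to prove the two directions separately. The forward implication ($\mu$ is a $\nabla$-point $\Rightarrow$ all other denting points are at distance $2$) is immediate: it is just a special case of Proposition~\ref{prop:nabla_distance_2_to_denting}, which was stated for arbitrary Banach spaces. So no further work is needed there.

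For the converse, I plan to mirror the argument that yields the analogous characterization of Daugavet points in $\mathcal{F}(M)$ given by \cite[Theorem~2.1]{VeeorgStudia}, exploiting the fact that in Lipschitz-free spaces, slices of $B_{\mathcal{F}(M)}$ admit denting points in a very strong sense. Concretely, fix a slice $S=S(f,\alpha)$ of $B_{\mathcal{F}(M)}$ with $\mu\notin S$, and fix $\varepsilon>0$. Since $f\in S_{\mathrm{Lip}_0(M)}$, there exist $x\neq y\in M$ with $\frac{f(x)-f(y)}{d(x,y)}>1-\alpha$, so that the molecule $m_{x,y}=\frac{\delta_x-\delta_y}{d(x,y)}$ belongs to $S$. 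The heart of Veeorg's argument is that such a molecule can then be approximated (in norm) by molecules that are actually denting points of $B_{\mathcal{F}(M)}$ while remaining inside the slice, via a small metric perturbation of the pair $(x,y)$. Applying this, I obtain a denting point $\nu\in S$; because $\mu\notin S$, we automatically have $\nu\neq\mu$, so by hypothesis $\|\mu-\nu\|=2$, which gives $\sup_{\eta\in S}\|\mu-\eta\|=2$ and hence that $\mu$ is a $\nabla$-point.

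The main subtlety is making sure that the denting-point approximation step from Veeorg's proof genuinely yields an element of the prescribed slice, and that one does not need to deal with slices containing $\mu$ (which in the Daugavet-point version is the more delicate case). For $\nabla$-points we only need slices avoiding $\mu$, which is precisely the case where $\nu$ produced by the approximation argument is automatically distinct from $\mu$; so in fact the $\nabla$-analogue is \emph{easier} than the Daugavet-point version, and no new ideas beyond those in \cite{VeeorgStudia} are required. I will therefore organize the write-up as a short direct proof with an explicit pointer to the technology of \cite[Theorem~2.1]{VeeorgStudia} for the approximation-by-denting-molecules step, so as not to reproduce that material in full.
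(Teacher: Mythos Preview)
Your forward direction is fine and matches the paper exactly.

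The converse, however, has a genuine gap. Your plan hinges on the claim that any slice $S(f,\alpha)$ of $B_{\mathcal{F}(M)}$ contains a denting point, obtained by approximating an arbitrary molecule $m_{x,y}\in S$ by denting molecules via a ``small metric perturbation''. This is not what \cite[Theorem~2.1]{VeeorgStudia} actually does, and it is false in general: if $M=[0,1]$, then $B_{\mathcal{F}(M)}$ has no denting points whatsoever, yet it certainly has slices. More generally, a slice $S(f,\alpha)$ need not contain any denting molecule precisely when the defining functional $f$ is \emph{local} (i.e.\ $f$ nearly attains its Lipschitz norm on pairs of points at arbitrarily small distance).

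The paper's proof --- and in fact the argument behind the Daugavet-point characterization you cite --- proceeds by a dichotomy on $f$. If $f$ is not local, then \cite[Proposition~2.7]{VeeorgFunc} produces a denting molecule $m_{uv}\in S(f,\alpha)$; since $\mu\notin S(f,\alpha)$ one has $m_{uv}\neq\mu$, and the hypothesis gives $\|\mu-m_{uv}\|=2$. This is the branch your outline covers. If $f$ is local, one instead invokes \cite[Theorem~2.6]{JRZ2022}, which shows that \emph{every} $\mu\in S_{\mathcal{F}(M)}$ --- with no assumption on distances to denting points --- is at distance at least $2-\varepsilon$ from some molecule in $S(f,\alpha)$. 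So the local case is handled by a completely different mechanism that does not use the hypothesis at all. Your write-up needs this second branch; without it the argument is incomplete.
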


\begin{proof}
One implication is covered by Proposition \ref{prop:nabla_distance_2_to_denting}. So assume that $\|\mu-\nu\|=2$ for every denting point $\nu$ of $B_{\mathcal{F}(M)}$ with $\nu\neq \mu$. Fix $\varepsilon>0$ and a slice $S(f,\alpha)$ with $\mu\notin S(f,\alpha)$. If $f$ is local, then by {\cite[Theorem~2.6]{JRZ2022}} there exists $m_{uv}\in S(f,\alpha)$ such that $\|\mu-m_{uv}\|\ge 2-\varepsilon$.
If $f$ is not local, then by {\cite[Proposition~2.7]{VeeorgFunc}} there exist a
denting point $m_{uv}\in S(f,\alpha)$, and by
assumption we have $\|\mu-m_{uv}\|=2$. Therefore $\mu$ is a 
$\nabla$-point.
\end{proof}


\section{\texorpdfstring{$\nabla$}{Nabla}-points in classical Banach spaces}\label{section:examples}

\subsection{Absolute sums}\label{subsec:absolute_sums}

Recall that a norm $N$ on $\R^2$ is said to be \emph{absolute} if $N(a,b)=N(\abs{a},\abs{b})$ for every $(a,b)\in \R^2$ and \emph{normalized} if $N(0,1)=N(1,0)=1$. If $X$ and $Y$ are Banach spaces, and if $N$ is an absolute normalized norm on $\R^2$, then we denote by $X\oplus_NY$ the \emph{absolute sum} of $X$ and $Y$, that is the Banach space  $(X\times Y,\norm{\cdot})$ where \begin{equation*}
    \norm{(x,y)}=N(\norm{x},\norm{y})\ \text{for every $(x,y)\in X\times Y$}.
\end{equation*} In particular, if $N:=\norm{\cdot}_p$ for some $p\in[1,\infty]$, then we simply denote by $X\oplus_p Y$ the \emph{$\ell_p$-sum} of $X$ and $Y$.

The study of Daugavet points in absolute sums of Banach spaces was started in \cite{AHLP} and completed in \cite{HPV}. In particular, recall that no $\ell_p$-sum of Banach spaces contains a Daugavet point when $1<p<\infty$, and that more generally, the absolute sum $X\oplus_N Y$ does not contain a Daugavet point if $N$ has the property $(\alpha)$ from \cite[Definition~4.4]{AHLP} for arbitrary $X$ and $Y$. Also recall that Daugavet points transfer very well through $\ell_1$-sums and $\ell_\infty$-sums, and that positive transfer results are more generally available for A-octaheral norms, see \cite[Section~2 and Section~3]{HPV}.

As was noticed in \cite[Section~4]{AHLP}, $\Delta$-points are much more flexible with respect to this operation than Daugavet points, and thus the $\Delta$-condition is not the one that provides obstructions to the existence of Daugavet points in absolute sums of Banach spaces. In fact, it can easily be checked that the $\nabla$-condition is the one that prevents the existence of these points whenever such an obstruction exists. But more can be said. Indeed, we will actually show that in absolute sums of Banach spaces, Daugavet points and $\nabla$-points can be different only if they are related either to the points $(1,0)$ or $(0,1)$ in $\ell_1^2$, or to the point $(1,1)$ in $\ell_\infty^2$. 

The specificity of these points is clearly apparent in the finite dimensional examples. 
Indeed, we have seen in Example~\ref{expl:l1_nabla} that every element of the unit vector basis $(e_i)$ of $\ell_1^n$ is a $\nabla$-point, so elements of the form $(x,0)$ or $(0,y)$ with $x\in S_X$ and $y\in S_Y$ can be $\nabla$-points without being Daugavet points in an $\ell_1$-sum. On the contrary, the point $\frac{1}{2}(e_1+e_2)$ is not a $\nabla$-point in $\ell_1^2$, so the $\ell_1$-sum of two $\nabla$-points with respect to the point $(\frac{1}{2},\frac{1}{2})$ need not be a $\nabla$-point. Similarly, the following example shows that elements of the form $(x,y)$ with $x\in S_X$ and $y\in S_Y$ can be $\nabla$-points without being Daugavet points in an $\ell_\infty$-sum, while the $\ell_\infty$-sum of $0$ and a $\nabla$-point need not be a $\nabla$-point, as $e_1$ is not $\nabla$ in $\ell_\infty^2$.

\begin{example}\label{expl:l_infty^n_nabla}

For every $n\in \N$ and for every $\theta:=(\theta_i)\in\{-1,1\}^n$, we have that $\sum_{i=1}^n\theta_i e_i$ is a $\nabla$-point in $\ell_\infty^n$.
    
\end{example}

So let us start by proving that the notions of Daugavet and $\nabla$-points coincide whenever the underlying absolute norm is neither equal to the $\ell_1$-norm nor to the $\ell_\infty$-norm.

\begin{theorem}\label{thm:Daugavet=Nabla_absolute-sums_not-1-infty}
Let $X$ and $Y$ be Banach spaces, let $N$ be an absolute normalized norm that is different from the $\ell_1$-norm and the $\ell_\infty$-norm, and let $(x,y)\in S_{X\oplus_NY}$. Then $(x,y)$ is a $\nabla$-point in $X\oplus_NY$ if and only if it is a Daugavet point. 
\end{theorem}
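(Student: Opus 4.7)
The direction Daugavet $\Rightarrow \nabla$ is immediate from the definitions, so we focus on the converse. Assuming $(x, y) \in S_{X \oplus_N Y}$ is a $\nabla$-point, by Lemma~\ref{lem:Daugavet=nabla+D} it suffices to verify the $\mathfrak{D}$-condition: for any $(x^*, y^*) \in D(x, y)$ and $\alpha > 0$, that $\sup_{(u, v) \in S((x^*, y^*), \alpha)} \|(x, y) - (u, v)\| = 2$.

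The core idea is to produce a functional $(\tilde x^*, \tilde y^*) \in S_{(X \oplus_N Y)^*}$ arbitrarily close to $(x^*, y^*)$ in dual norm and satisfying $(\tilde x^*, \tilde y^*)(x, y) < 1$. Once such a perturbation is in hand, one checks by a standard computation that for $\alpha' > 0$ sufficiently small, the slice $S((\tilde x^*, \tilde y^*), \alpha')$ does not contain $(x, y)$ and is contained in $S((x^*, y^*), \alpha)$; the $\nabla$-hypothesis applied to this new slice (which no longer contains $(x, y)$) then yields the conclusion. To construct $(\tilde x^*, \tilde y^*)$, I write $a := \|x^*\|$, $b := \|y^*\|$, $s := \|x\|$, $t := \|y\|$, so that $(a, b) \in S_{N^*}$ supports $(s, t) \in S_N$ in $(\R^2, N)$ with $a s + b t = 1$, and I seek $(\tilde a, \tilde b) \in S_{N^*}$ close to $(a, b)$ with $\tilde a s + \tilde b t < 1$. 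The perturbed functional is then obtained by replacing $a,b$ with $\tilde a, \tilde b$ in the natural decomposition of $(x^*, y^*)$ (with the obvious convention when $a$ or $b$ vanishes).

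The role of the hypothesis $N \neq \ell_1, \ell_\infty$ is precisely to guarantee the existence of such a perturbation on $S_{N^*}$, and the main obstacle is the geometric case analysis this requires. When $(s, t)$ is a smooth point of $S_N$, the supporting set $D_N((s, t))$ is a singleton in $S_{N^*}$, and small perturbations along $S_{N^*}$ automatically leave it. When $(s, t)$ is a corner of $S_N$, the set $D_N((s, t))$ is a non-trivial arc of $S_{N^*}$, and if $(a, b)$ lies in its relative interior one must work transversely: I will verify that, unless $(s, t)$ and $(a, b)$ are in one of the specific ``flat'' configurations of $\ell_1$ (namely $(s, t) \in \{(1, 0), (0, 1)\}$ with $(a, b)$ in the interior of the full axial face of $B_{N^*} = B_{\ell_\infty}$) or of $\ell_\infty$ (namely $(s, t) = (1, 1)$ with $(a, b)$ in the interior of the diagonal face of $B_{N^*} = B_{\ell_1}$), a suitable transverse perturbation is available. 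In the remaining ``borderline'' corner configurations of $S_N$ not excluded by hypothesis (for instance the intermediate corners of a non-$\ell_1$, non-$\ell_\infty$ polygonal norm), one verifies directly via Proposition~\ref{prop:nabla_distance_2_to_denting} that $(x, y)$ cannot be a $\nabla$-point in the first place, so the conclusion holds vacuously. This combination of the perturbation argument and the denting-point obstruction exhausts all cases.
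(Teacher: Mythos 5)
Your overall strategy—reduce to the $\mathfrak{D}$-condition via Lemma~\ref{lem:Daugavet=nabla+D} and then perturb each supporting functional into one whose small slices miss $(x,y)$, so that the $\nabla$-hypothesis can be invoked—is sound in outline, but the case analysis it rests on has a genuine hole. The perturbation you need, namely $(\tilde a,\tilde b)\in S_{N^*}$ arbitrarily close to $(a,b)=(\|x^*\|,\|y^*\|)$ with $\tilde a\|x\|+\tilde b\|y\|<1$, fails to exist exactly when $(\|x\|,\|y\|)$ is a vertex of $B_N$ and $(a,b)$ lies in the relative interior of the dual edge of $B_{N^*}$, and this configuration is by no means confined to $\ell_1$ and $\ell_\infty$. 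Take the absolute normalized norm $N$ with $B_N\cap\R_+^2=\conv\{(0,0),(1,0),(2/3,2/3),(0,1)\}$: then $(2/3,2/3)$ is a vertex of $B_N$, its dual face is the segment of $S_{N^*}$ from $(1,1/2)$ to $(1/2,1)$ (lying on the line $a+b=3/2$), and $(3/4,3/4)$ sits in its relative interior, so every nearby $(\tilde a,\tilde b)\in S_{N^*}$ still satisfies $\tfrac23\tilde a+\tfrac23\tilde b=1$ and no admissible perturbation exists for the supporting functional $\tfrac34(\hat x^*,\hat y^*)$ at $(\tfrac23x_0,\tfrac23y_0)$. Your fallback—that in such ``borderline'' configurations $(x,y)$ cannot be a $\nabla$-point, by Proposition~\ref{prop:nabla_distance_2_to_denting}—is false: this $N$ is octahedral at $(2/3,2/3)$, so if $X$ and $Y$ have the Daugavet property the transfer results of \cite{HPV} make $(\tfrac23x_0,\tfrac23y_0)$ a genuine Daugavet point, hence a $\nabla$-point, while $B_{X\oplus_NY}$ has no denting points at all, so Proposition~\ref{prop:nabla_distance_2_to_denting} gives no obstruction whatsoever. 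Thus your dichotomy misses precisely the cases where the real work lies.

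The paper sidesteps this by never attempting to verify the $\mathfrak{D}$-condition in the sum directly. It first shows that the normalized components $x/\|x\|$ and $y/\|y\|$ are Daugavet points in $X$ and $Y$: given a slice $S(x^*,\alpha)$ of $B_X$, it builds an auxiliary functional $f=(x^*,y^*)$ on $X\oplus_NY$ with $y^*(y)\le0$, so that $\|f\|=N^*(1,1)>1$ (this is where $N\neq\|\cdot\|_1$ enters) while $f(x,y)\le1$; the corresponding slice of $B_{X\oplus_NY}$ then misses $(x,y)$, the $\nabla$-hypothesis applies, and a quantitative lemma pushes the resulting far-away point back into $S(x^*,\alpha)$ (the case $0<\|x\|<1$ is handled by adapting \cite[Theorem~3.1]{HPV}, and $\|x\|=1$ forces $\|y\|<1$ since $N\neq\|\cdot\|_\infty$). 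Once both components are Daugavet points, the transfer results for $\Delta$-points in absolute sums give that $(x,y)$ is a $\Delta$-point, and $\nabla$ plus $\Delta$ equals Daugavet. To repair your argument you would need a direct treatment of exactly those vertex/interior configurations that your perturbation cannot reach.
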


\begin{proof}
    As every Daugavet point is a $\nabla$-point, it suffices to show that $(x,y)\in S_{X\oplus_NY}$ is a Daugavet point whenever it is assumed to be a $\nabla$-point. So assume that $(x,y)\in S_{X\oplus_NY}$ is a $\nabla$-point. We will start by proving that if $x\neq0$, then $x/\|x\|$ is a Daugavet point. 
    
    First notice that if $0<\|x\|<1$, then we can actually copy paste the proof of \cite[Theorem~3.1]{HPV}. Indeed, the key slices that are involved there are defined by functionals of the form $f:=(x^*,0)$ with $x^*\in S_{X^*}$. But as we have $f(x,y)\le \|x\|<1$ by assumption, taking a small enough parameter $\delta>0$ will exclude $(x,y)$ from the corresponding slices, and the $\nabla$-condition will be available in place of the Daugavet condition. We leave the details to the reader.
    
   Now assume that $\|x\|=1$. Then $\|y\|<1$ because $N$ is different from the $\ell_\infty$-norm. Fix $x^*\in S_{X^*}$ and $\alpha>0$. Then let $y^*\in S_{Y^*}$ be such that $y^*(y)\le0$, and let $f:=(x^*,y^*)$. Since $N$ is different from the $\ell_1$-norm, we have $\|f\|>1$. Let $\varepsilon\in (0,1]$ be such that $\|f\|>1+\varepsilon$. By \cite[Lemma~1.4]{HPV} there exists $\delta\in \big(0,\varepsilon\alpha\big)$ such that $\|y\|<1-\delta$, $\|f\|>1+\varepsilon+\delta$, and for every $p,q,r\geq 0$, if
    \[2-\delta \le N(p,q)\leq N(r,q) \leq 2\quad \textnormal{and}\quad  q<2-\delta,\]
    then $|p-r|<\varepsilon^2$.
    As $y^*(y)\leq 0$, we have $f(x,y)\le x^*(x)\leq 1$, and thus since $(x,y)$ is a $\nabla$-point, there exists $(u,v)\in B_{X\oplus_NY}$ such that $f(u,v)>\|f\|-\delta$ and $\big\|(x,y)-(u,v)\big\|\ge 2-\delta$. Then
    \begin{equation*}
        \|u\|\ge x^*(u)=f(u,v)-y^*(v)>\|f\|-\delta-\|v\|\ge \|f\|-\delta-1\ge \varepsilon.
    \end{equation*}
     We also have 
    \begin{equation*}
        x^*(u)>\|f\|-\delta-\|v\|\ge \|x^*\|\|u\|+\|y^*\|\|v\|-\delta-\|v\|= \|u\|-\delta\ge \|u\|-\varepsilon\alpha\ge \|u\|(1-\alpha),
    \end{equation*}
    and thus $u/\|u\|\in S(x^*,\alpha)$.
    Furthermore
    \begin{equation*}
        2-\delta\le N\big(\|x-u\|,\|y-v\|\big)\le N\big(\|x\|+\|u\|,\|y-v\|\big)\le 2
    \end{equation*}
    and $\|y-v\|\le\|y\|+\|v\|< 2-\delta$.
    Hence  
    \begin{equation*}
        \big|\|x\|+\|u\|-\|x-u\|\big|<\varepsilon^2<\varepsilon\|u\|,
    \end{equation*}
    and
    \begin{align*}
        \Big\|x-\frac{u}{\|u\|}\Big\|&\ge \frac{1}{\|u\|}\|x-u\|-\Big(\frac{1}{\|u\|}-1\Big)\|x\|\\
        &\ge \frac{1}{\|u\|}\big(\|x\|+\|u\|-\varepsilon\|u\|\big)-\Big(\frac{1}{\|u\|}-1\Big)\|x\|\\
        &=2-\varepsilon.
    \end{align*}
    Thus $x$ is a Daugavet point. Similarly we can prove that if $y\neq0$, then $y/\|y\|$ is a Daugavet point. Consequently, we get by the observations from \cite[Section~4]{AHLP} that $(x,y)$ is a $\Delta$-point in $X\oplus_N Y$, and since it is also $\nabla$ by assumption, we deduce that $(x,y)$ is a Daugavet point as we wanted. 
\end{proof}

From this we immediately get as corollaries to the results from \cite{AHLP} and \cite{HPV} that $\ell_p$-sums of Banach spaces with $p\in(1,\infty)$ (as well as $N$-sums of Banach spaces when $N$ has property $(\alpha)$) do not admit $\nabla$-points, and that characterizations for $\nabla$-points are available in the case of A-octahedral norms that are different from the $\ell_1$-norm and the $\ell_\infty$-norm, see e.g. \cite[Theorems~2.2 and 3.1]{HPV}.

To conclude the section, it now only remains to do a specific study for $\ell_1$-sums and $\ell_\infty$-sums.  For $\ell_1$-sums, we have the following statements. 

\begin{proposition}\label{prop:nabla_points_l1_sums}
    Let $X$ and $Y$ be Banach spaces, and let $x\in S_X$. Then  $x$ is a $\nabla$-point in $X$ if and only if $(x,0)$ is a $\nabla$-point in $X\oplus_1 Y$.
\end{proposition}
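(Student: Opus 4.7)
The proof will use both directions separately, with the key observation that $(X\oplus_1 Y)^* = X^* \oplus_\infty Y^*$, so every slice of $B_{X\oplus_1 Y}$ is defined by a pair $(x^*,y^*)$ with $\max(\norm{x^*},\norm{y^*})=1$. The forward direction splits into two cases according to which of these norms equals one, while the backward direction is obtained by pulling back slices of $B_X$ to slices of $B_{X\oplus_1 Y}$ through functionals of the form $(x^*,0)$ with a suitably small parameter.

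\textbf{Forward direction.} Assume $x\in S_X$ is a $\nabla$-point in $X$, and let $S:=S(B_{X\oplus_1 Y},(x^*,y^*),\alpha)$ be a slice not containing $(x,0)$; that is, $x^*(x)\le 1-\alpha$. Fix $\eps>0$. If $\norm{x^*}=1$, then $x$ lies outside the slice $S(B_X,x^*,\alpha)$, so the $\nabla$-property in $X$ yields $u\in S(B_X,x^*,\alpha)$ with $\norm{x-u}>2-\eps$; then $(u,0)\in S$ and $\norm{(x,0)-(u,0)}=\norm{x-u}>2-\eps$. If $\norm{x^*}<1$, then $\norm{y^*}=1$ (the case $Y=\{0\}$ being trivial), so we can pick $v\in B_Y$ with $y^*(v)>1-\min(\alpha,\eps)$; then $(0,v)\in S$ and $\norm{(x,0)-(0,v)}=1+\norm{v}\ge 1+y^*(v)>2-\eps$.

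\textbf{Backward direction.} Assume $(x,0)$ is a $\nabla$-point in $X\oplus_1 Y$, and let $S:=S(B_X,x^*,\alpha)$ be a slice of $B_X$ not containing $x$, so $x^*(x)\le 1-\alpha$. Fix $\eps>0$ and choose $\eta<\min(\alpha,\eps)$. Consider the slice $S':=S(B_{X\oplus_1 Y},(x^*,0),\eta)$. Since $(x^*,0)(x,0)=x^*(x)\le 1-\alpha<1-\eta$, the point $(x,0)$ is not in $S'$, so by hypothesis there is $(u,v)\in S'$ with $\norm{(x,0)-(u,v)}=\norm{x-u}+\norm{v}>2-\eta$. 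From $x^*(u)>1-\eta$ and $\norm{u}+\norm{v}\le 1$ we deduce $\norm{v}<\eta$, so $\norm{x-u}>2-2\eta>2-2\eps$. Moreover $u\in B_X$ with $x^*(u)>1-\eta>1-\alpha$, hence $u\in S$, proving that $x$ is a $\nabla$-point in $X$.

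\textbf{Expected difficulty.} The argument is elementary and there is no real obstacle: the only mild subtlety is in the forward direction, where one must remember to treat the case $\norm{x^*}<1$ (i.e.\ the slice is ``driven by the $Y$-component'') separately, using the fact that $Y\ne\{0\}$ allows one to produce vectors almost normalizing $y^*$. The backward direction is a purely mechanical pullback once one chooses the slice parameter $\eta$ small enough to force $\norm{v}$ to be negligible.
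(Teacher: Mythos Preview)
Your proof is correct and follows essentially the same approach as the paper: the same case split on $\norm{x^*}=1$ versus $\norm{x^*}<1$ in the forward direction, and the same pullback via $(x^*,0)$ with a small parameter in the backward direction. The only cosmetic differences are that the paper picks $v\in S_Y$ in case (b) to get distance exactly $2$, and uses $\delta=\min\{\alpha,\eps/2\}$ in the converse to land at $2-\eps$ rather than $2-2\eps$; neither affects the argument.
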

\begin{proof}
    Let $Z:=X\oplus_1 Y$, and let us first assume that $x$ is a $\nabla$-point in $X$. Fix $\eps>0$, and
    let $S:=S(B_Z, f, \alpha)$ with $f:=(x^*, y^*)\in S_{Z^*}$ and $\alpha>0$. Then assume that $(x,0)\notin S$.
    We look at two cases:
    \begin{enumerate}
        \item[(a)] If $\|x^*\|=1$, then $x\notin S(B_X, x^\ast, \alpha)$. Thus there is a $u\in S(B_X, x^\ast, \alpha)$ such that $\|x-u\|\geq 2-\varepsilon$. Now $(u,0)\in S$ and $\|(x,0)-(u,0)\|_1\geq 2-\varepsilon$.
        \item[(b)] If $\|x^*\|<1$, then $\|y^*\|=1$. Find $y\in S_Y$ such that $y^*(y)>1-\alpha$. Then $(0,y)\in S$ and $\|(x,0)-(0,y)\|_1=\|x\|+\|y\|=2$, so we are done.
    \end{enumerate}

    Conversely, let us assume that $(x,0)$ is a $\nabla$-point in $Z$. Fix $\eps>0$ and let $S:=S(B_X, x^*, \alpha)$ with $x^*\in S_{X^*}$ and $\alpha>0$. Then assume that $x\notin S$. Let $\delta=\min\{\alpha, \varepsilon/2\}$. Then $(x,0)\notin S(B_Z, f, \delta)$ where $f:=(x^*,0)\in S_{Z^*}$. Since $(x,0)$ is a $\nabla$-point in $Z$, we can find $(u,v)\in S(B_Z, f, \delta)$ such that $\|x-u\|+\|v\|\geq 2-\delta$. Then $u\in S(B_X, x^*, \delta)\subset S(B_X, x^*, \alpha)$. In particular, $\norm{v}=1-\norm{u}<\delta$, hence $\|x-u\|\geq 2-2\delta\geq 2-\varepsilon$.
\end{proof}

\begin{proposition}
    Let $X$ and $Y$ be Banach spaces, let $x\in S_X$ and $y\in S_Y$, and let $a,b>0$ be such that $a+b=1$. If $(ax,by)$ is a $\nabla$-point in $X\oplus_1 Y$, then both $x$ and $y$ are Daugavet points. 
\end{proposition}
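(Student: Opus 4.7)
The plan is to prove that $x$ is a Daugavet point in $X$; the argument for $y$ will then be symmetric. So I would fix $x^*\in S_{X^*}$, $\alpha>0$, and $\eps>0$, and aim to produce $u\in S(B_X,x^*,\alpha)$ with $\norm{x-u}>2-\eps$.

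The key idea is to lift $x^*$ to the norm-one functional $f:=(x^*,0)$ on $X\oplus_1 Y$. Since $b>0$ and hence $a<1$, one has $f(ax,by)=ax^*(x)\le a<1$ regardless of the value of $x^*(x)$, so for every $\gamma\in(0,b]$ the slice $S(B_{X\oplus_1 Y},f,\gamma)$ automatically excludes $(ax,by)$. This is what makes the $\nabla$-hypothesis applicable uniformly over all choices of $x^*\in S_{X^*}$.

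I would then pick $\gamma,\delta>0$ small (concretely, with $\gamma<\min(\alpha,b)$ and $\gamma+\delta<a\eps$) and apply the $\nabla$-property of $(ax,by)$ to this slice to obtain a pair $(u,v)$ with $x^*(u)>1-\gamma$ and $\norm{(ax,by)-(u,v)}\ge 2-\delta$. The first inequality forces $\norm{u}>1-\gamma>a$ and $\norm{v}<\gamma$; feeding the bound on $\norm{v}$ back into $\norm{by-v}\le b+\norm{v}$ and the distance estimate gives $\norm{ax-u}\ge 1+a-\gamma-\delta$, close to its maximum possible value $1+a$. A short computation using the decomposition $\norm{u}x-u=(\norm{u}-a)x+(ax-u)$ together with $\norm{u}\le 1$ then yields $\norm{x-u/\norm{u}}\ge 2-(\gamma+\delta)/a>2-\eps$, while $u/\norm{u}$ automatically belongs to $S(B_X,x^*,\gamma)\subset S(B_X,x^*,\alpha)$.

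The one non-obvious step is the choice of $f$. The natural first guess $(x^*,-y^*)$ with $y^*\in D(y)$ also excludes $(ax,by)$ from the relevant slice, but produces a pair $(u,v)$ for which $\norm{u}$ may be arbitrarily small (making the normalization $u/\norm{u}$ useless). The choice $f=(x^*,0)$ sidesteps this difficulty because $a<1$ (which is precisely where the hypothesis $b>0$ enters) forces $\norm{v}$ near zero and hence $\norm{u}$ near one, which is exactly what is needed to transfer the distance-two estimate from $X\oplus_1 Y$ back to $X$.
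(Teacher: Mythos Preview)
Your direct argument is correct: the choice $f=(x^*,0)$ together with $a<1$ guarantees that $(ax,by)$ lies outside the slice $S(f,\gamma)$ for all $\gamma\le b$, and the chain of estimates you describe does yield $\norm{x-u/\norm{u}}\ge 2-(\gamma+\delta)/a$.

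However, the paper takes a much shorter and conceptually different route. It simply observes that since $a,b\in(0,1)$, the point $(ax,by)=a(x,0)+b(0,y)$ is a proper convex combination of two distinct points of $B_{X\oplus_1 Y}$, hence is not extreme and in particular not strongly exposed. Theorem~\ref{thm:Nabla_not_str-exp_is_Daugavet} then upgrades the $\nabla$-point $(ax,by)$ to a genuine Daugavet point, after which the known transfer result \cite[Theorem~3.1]{HPV} immediately gives that both $x$ and $y$ are Daugavet. Your approach is self-contained and avoids invoking Theorem~\ref{thm:Nabla_not_str-exp_is_Daugavet} (whose proof is somewhat delicate), at the price of reproducing by hand the slice manipulations specific to $\ell_1$-sums; the paper's approach is cleaner but relies on that structural dichotomy for $\nabla$-points as a black box.
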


\begin{proof}
    Let $Z:=X\oplus_1 Y$ and assume that $(ax,by)$ is a $\nabla$-point in $Z$. Note that since $a,b>0$, we have that $(ax,by)$ is not extreme in $B_Z$ hence not strongly exposed. So by Theorem~\ref{thm:Nabla_not_str-exp_is_Daugavet}, $(ax,by)$ is actually a Daugavet point in $Z$. It then follows from \cite[Theorem~3.1]{HPV} that $x$ and $y$ are Daugavet points in $X$ and $Y$ respectively. 
\end{proof}

Finally, for $\ell_\infty$-sums, we have the following result. One direction is straightforward, and the other is analogous to \cite[Theorem~3.2]{HPV}. We leave the details to the reader.

\begin{proposition}

Let $X,Y$ be Banach spaces, and let $x\in B_X$ and $y\in B_Y$. Then $(x,y)$ is a $\nabla$-point in $X\oplus_\infty Y$ if and only if one of the two following conditions is satisfied:
\begin{enumerate}
    \item $x$ is a Daugavet point or $y$ is a Daugavet point; 

    \item $x$ and $y$ are both $\nabla$-points.
    
\end{enumerate}
    
\end{proposition}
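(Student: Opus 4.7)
The plan is to prove each implication separately, and for the converse to argue by contrapositive, splitting on the value of $\norm{y}$.

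For the forward direction under condition~(1), mimic the proof of \cite[Theorem~3.2]{HPV}: if say $x$ is a Daugavet point and $S(Z,(x^*,y^*),\alpha)$ is any slice of $B_Z$, then when $\norm{x^*}>0$ use the Daugavet property of $x$ to find $u\in B_X$ with $x^*(u)>\norm{x^*}-\eps$ and $\norm{x-u}\geq 2-\eps$, and complete with $v\in B_Y$ nearly attaining $y^*$; when $\norm{x^*}=0$, take $v\in S(B_Y,y^*,\alpha)$ and $u=-x$, which gives $\norm{(x,y)-(u,v)}_\infty\geq 2$. Under condition~(2), given a slice $S(Z,(x^*,y^*),\alpha)$ not containing $(x,y)$, rewrite the exclusion as $(\norm{x^*}-x^*(x))+(\norm{y^*}-y^*(y))\geq\alpha$. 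WLOG the first summand is $\geq \alpha/2$, so $\norm{x^*}>0$ and $x\notin S(B_X,x^*/\norm{x^*},\alpha/(2\norm{x^*}))$. The $\nabla$-property of $x$ then produces $u$ in this $X$-slice with $\norm{x-u}\geq 2-\eps$ and $x^*(u)>\norm{x^*}-\alpha/2$, and completing with any $v\in B_Y$ satisfying $y^*(v)>\norm{y^*}-\alpha/2$ yields $(u,v)$ in the original $Z$-slice with $\norm{(x,y)-(u,v)}_\infty\geq \norm{x-u}\geq 2-\eps$.

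For the converse I assume both (1) and (2) fail; then neither $x$ nor $y$ is a Daugavet point, and at least one of them, say $x$, is not a $\nabla$-point. This yields $x_0^*\in S_{X^*}$ and $\alpha_0,\eta_0>0$ with $x_0^*(x)\leq 1-\alpha_0$ and $\sup_{u\in S(B_X,x_0^*,\alpha_0)}\norm{x-u}\leq 2-\eta_0$. If $\norm{y}=1$, the non-Daugavet property of $y$ similarly gives a bad slice $S(B_Y,y_0^*,\alpha_1)$ with $\sup_{v\in S(B_Y,y_0^*,\alpha_1)}\norm{y-v}\leq 2-\eta_1$. Setting $f=\tfrac12(x_0^*,y_0^*)\in S_{Z^*}$ and choosing $\alpha<\min(\alpha_0,\alpha_1)/2$, the estimate $f(x,y)\leq 1-\alpha_0/2$ keeps $(x,y)$ outside $S(Z,f,\alpha)$, while for any $(u,v)$ in this $Z$-slice, the inequality $x_0^*(u)\leq 1-\alpha_0$ would force $f(u,v)\leq 1-\alpha_0/2<1-\alpha$ (and similarly for $v$); hence both $u$ and $v$ must lie in their respective bad slices, giving $\norm{(x,y)-(u,v)}_\infty\leq 2-\min(\eta_0,\eta_1)<2$ and contradicting that $(x,y)$ is a $\nabla$-point. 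If instead $\norm{y}<1$, then $y$ is vacuously neither $\nabla$ nor Daugavet, and the assumption reduces to $x$ being non-Daugavet (the bad $X$-slice now need not exclude $x$). Pick $y_0^*\in S_{Y^*}$ by Hahn--Banach with $y_0^*(y)\leq 0$, so every $v\in B_Y$ satisfies $\norm{y-v}\leq 1+\norm{y}<2$. The same combined construction $f=\tfrac12(x_0^*,y_0^*)$ applies: $y_0^*(y)\leq 0$ gives $f(x,y)\leq 1/2<1-\alpha$ for $\alpha$ small, the slice analysis still forces $u\in S(B_X,x_0^*,\alpha_0)$, and the bound on $\norm{y-v}$ is now automatic.

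The main obstacle is the delicate simultaneous choice of the combination coefficients and the slice width $\alpha$ for the $Z^*$-functional $f=(x_0^*/2,y_0^*/2)$, ensuring that $(x,y)$ remains outside $S(Z,f,\alpha)$ while every inside point has both coordinates in the respective bad slices. The case $\norm{y}=1$ is the most sensitive: the balancing genuinely requires the $\nabla$-defect $\alpha_0$ of $x$, not merely its weaker non-Daugavet character, which explains precisely why the proposition demands the disjunction of~(1) and~(2) rather than a single condition.
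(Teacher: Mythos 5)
Your argument is correct and follows exactly the route the paper intends: the paper omits the proof, describing one direction as straightforward and the other as analogous to \cite[Theorem~3.2]{HPV}, and your slice-lifting via the $\ell_1$-decomposition of the dual (forward direction) together with the combination of two bad slices under a functional of the form $\frac12(x_0^*,y_0^*)$ (converse, by contrapositive) is precisely that scheme. The only point deserving one extra line is the sub-case $\|x\|<1$ inside your first branch of the converse, where the bad slice for $x$ comes not from negating the $\nabla$-definition (which only applies to sphere points) but from the trivial bound $\sup_{u\in B_X}\|x-u\|\le 1+\|x\|<2$ together with a thin enough slice excluding $x$ --- the same observation you already make explicitly for $y$ in the second branch.
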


\subsection{Function spaces}\label{subsection:function-spaces}

Let $(\Omega,\Sigma,\mu)$ be a measured space. It is well known that the space $L_1(\mu)$ has the Daugavet property if and only if $\mu$ admits no atom (see e.g. \cite[Section~2, Example~(b)]{Werner01}). In fact, it was observed in \cite[Proposition~4.12]{MPRZ} that this is actually equivalent to $L_1(\mu)$ having the strong diameter 2 property. Building on \cite[Theorem~3.1]{AHLP}, the following result was proved in \cite[Corollary~4.1]{MPRZ}.

\begin{proposition}

    Let $f\in S_{L_1(\mu)}$. Then the following assertions are equivalent:

    \begin{enumerate}
        \item $f$ is a super Daugavet point;

        \item $f$ is a $\Delta$-point;

        \item The support of $f$ contains no atom.
        
    \end{enumerate}
\end{proposition}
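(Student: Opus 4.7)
The plan is to prove $(1) \Rightarrow (2) \Rightarrow (3) \Rightarrow (1)$. The implication $(1) \Rightarrow (2)$ is immediate since every super Daugavet point is, in particular, a $\Delta$-point.

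For $(2) \Rightarrow (3)$, I argue by contrapositive. Suppose $A \subseteq \supp(f)$ is an atom of $\mu$. Since $A$ is an atom, $f|_A$ is $\mu$-a.e.\ a constant $c \neq 0$; set $\beta := |c|\mu(A) \in (0,1]$. The functional $x^* := \sign(c)\chi_A \in S_{L_\infty(\mu)}$ satisfies $x^*(f) = \beta$, so with $\alpha := 1 - \beta/2$ the slice $S(x^*, \alpha)$ contains $f$. For any $g$ in this slice, atomicity forces $g|_A$ to be $\mu$-a.e.\ a constant $d$ with $\sign(d) = \sign(c)$ and $|d|\mu(A) > \beta/2$. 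Splitting the integral over $A$ and $A^c$ and using $\int_{A^c}|f|\,d\mu = 1 - |c|\mu(A)$ together with $\int_{A^c}|g|\,d\mu \leq 1 - |d|\mu(A)$ yields
\[\|f - g\|_1 \leq |c-d|\mu(A) + (1 - |c|\mu(A)) + (1 - |d|\mu(A)) = 2 - 2\min(|c|,|d|)\mu(A) \leq 2 - \beta,\]
so the slice has $f$-diameter at most $2-\beta < 2$, showing that $f$ is not a $\Delta$-point.

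For $(3) \Rightarrow (1)$, assume $S := \supp(f)$ is $\mu$-atomless and let a non-empty relatively weakly open subset $V \subseteq B_{L_1(\mu)}$ and $\eps > 0$ be given. Fix $g_0 \in V$ and a basic weak neighborhood $W := \{g \in B_{L_1(\mu)} : |\phi_i(g - g_0)| < \delta,\ i=1,\ldots,n\} \subseteq V$ determined by $\phi_1,\ldots,\phi_n \in L_\infty(\mu)$ and $\delta > 0$. Setting $M := 1 - \|g_0\chi_{S^c}\|_1$ and $\psi_i := \phi_i \sign(f)$, I look for $g \in W$ with $\|f - g\|_1 > 2 - \eps$ of the form $g := g_0\chi_{S^c} + h$, where $h$ is supported on $S$ with $\|h\|_1 = M$, of the signed spike shape
\[h = \frac{M_+}{\mu(E_+)}\sign(f)\chi_{E_+} - \frac{M_-}{\mu(E_-)}\sign(f)\chi_{E_-},\]
with $E_\pm \subseteq S$ disjoint and $M_+ + M_- = M$. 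A pointwise computation on $E_+$, $E_-$ and $S\setminus(E_+\cup E_-)$ then gives $\|f-g\|_1 \geq 2 - 2\int_{E_+}|f|\,d\mu$, so it suffices to extract $E_\pm$ and $M_\pm$ with $\int_{E_+}|f| < \eps/2$ and $|\phi_i(h) - \phi_i(g_0\chi_S)| < \delta$ for all $i$, the latter unfolding into the weighted-average identity $M_+\operatorname{avg}_{E_+}\psi_i - M_-\operatorname{avg}_{E_-}\psi_i \approx \phi_i(g_0\chi_S)$. Since $\mu|_S$ is atomless, I apply Lyapunov's convexity theorem to the vector measure $E \mapsto (\int_E \psi_i\,d\mu,\, \int_E |f|\,d\mu,\, \mu(E))_i$ on $(S, \mu|_S)$, whose range is a convex subset of $\R^{n+2}$, to jointly meet all these constraints.

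The main obstacle is the Lyapunov coordination in $(3) \Rightarrow (1)$: one must check that the target signed-average vector is achievable while simultaneously taking $E_+$ of arbitrarily small $|f|$-mass. Feasibility rests on the bound $|\phi_i(g_0\chi_S)| \leq \|g_0\chi_S\|_1 \leq M$ and on splitting the total weight $M$ between a small "positive" spike on $E_+$ and a free "negative" spike on $E_-$, which provides enough freedom to realize any target in the convex hull of $\{\psi(x) : x \in S\}$. The edge case $\|g_0\chi_{S^c}\|_1$ close to $1$ collapses $M$ to $0$, but then $\|f-g_0\|_1$ is already close to $2$ and $g := g_0$ works. An alternative that avoids this explicit Lyapunov bookkeeping decomposes $L_1(\mu) = L_1(\mu|_S) \oplus_1 L_1(\mu|_{S^c})$ isometrically, invokes the known Daugavet (and hence super Daugavet) property of $L_1$ over atomless measures for the first summand, and transfers $(f,0) = f$ through the $\ell_1$-sum by an argument analogous to Proposition~\ref{prop:nabla_points_l1_sums}.
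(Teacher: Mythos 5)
Your chain $(1)\Rightarrow(2)\Rightarrow(3)\Rightarrow(1)$ is correct, but note that the paper itself offers no proof of this proposition: it is quoted from \cite[Corollary~4.1]{MPRZ}, which builds on \cite[Theorem~3.1]{AHLP}, so your argument is necessarily a different (from-scratch) route. The implications $(1)\Rightarrow(2)$ and $(2)\Rightarrow(3)$ are complete and correct; the slice estimate $\|f-g\|_1\le 2-2\min(|c|,|d|)\mu(A)\le 2-\beta$ is exactly the standard atomic obstruction. For $(3)\Rightarrow(1)$ the direct spike construction is sound in outline, but the feasibility step is looser than you indicate: after normalizing by $M$, the target vector $\big(\phi_i(g_0\chi_S)\big)_i$ lies in $\conv\big(C\cup(-C)\big)$, where $C$ is the closed convex hull of the essential range of $(\psi_i)_i$ on $S$, and not in $C$ itself -- so one must split $g_0\sign(f)\chi_S$ into positive and negative parts, realize the two resulting averages separately on $E_+$ and $E_-$, and pad both spikes by a common term to force $M_++M_-=M$ exactly; one also needs the $\sigma$-finiteness of $\mu|_S$ (which holds because $f\in L_1(\mu)$) before invoking Lyapunov, since $\mu(S)$ may be infinite. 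These details are all fillable, and your Lyapunov shrinking of $E_+$ (preserving averages while driving $\int_{E_+}|f|\,d\mu\to 0$) is the right device. That said, the alternative you sketch at the end -- the isometric identification $L_1(\mu)=L_1(\mu|_S)\oplus_1 L_1(\mu|_{S^c})$, the Daugavet property of the atomless summand (which makes every point of its unit sphere a super Daugavet point), and the transfer of $(f,0)$ through the $\ell_1$-sum via \cite[Remark~3.28]{MPRZ} -- is shorter, avoids all of this bookkeeping, and is much closer to how the cited references actually obtain the result; I would promote it to the main proof and keep the explicit spike construction only if a self-contained argument is wanted.
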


If $\mu$ admits atoms, then we can also show that $L_1(\mu)$  naturally contains $\nabla$-points which are not Daugavet points, and that those are exactly the points given by normalized indicator functions over an atom and their opposites, extending Example~\ref{expl:l1_nabla} in a natural way. 

\begin{proposition}

    Let $f\in S_{L_1(\mu)}$ and assume that the support of $f$ contains an atom $A$. Then $f$ is a $\nabla$-point if and only if $f=\pm \frac{\1_A}{\mu(A)}$.

\end{proposition}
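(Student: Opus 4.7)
The plan is to handle the two directions separately, leveraging the preceding characterization of super Daugavet / $\Delta$-points in $L_1(\mu)$ together with Theorem~\ref{thm:Nabla_not_str-exp_is_Daugavet} for the ``only if'' direction, and an explicit construction for the ``if'' direction.

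For the ``if'' direction, suppose $f=\frac{\1_A}{\mu(A)}$ with $A$ an atom, and let $S:=S(\phi,\alpha)$ be a slice of $B_{L_1(\mu)}$ defined by some $\phi\in S_{L_\infty(\mu)}$ and $\alpha>0$ with $f\notin S$. Since $A$ is an atom, $\phi$ is essentially constant on $A$, equal to some $c\in[-1,1]$, and the condition $f\notin S$ reads $c=\phi(f)\leq 1-\alpha$. Pick $\delta\in(0,\alpha)$, so in particular $c<1-\delta$, and set $E:=\{\phi>1-\delta\}$. Because $\|\phi\|_\infty=1$, the set $E$ has positive measure, and by the previous inequality $E$ is essentially disjoint from $A$. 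Choose $E'\subset E$ with $0<\mu(E')<\infty$ and set $g:=\frac{\1_{E'}}{\mu(E')}\in S_{L_1(\mu)}$. Then $\phi(g)>1-\delta>1-\alpha$, so $g\in S$, and as $f$ and $g$ have disjoint supports, $\|f-g\|_1=2$.

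For the ``only if'' direction, assume $f$ is a $\nabla$-point in $S_{L_1(\mu)}$ and that $\supp(f)$ contains an atom $A$. By the preceding proposition, $f$ is not a $\Delta$-point (since its support contains an atom), and hence not a Daugavet point. Theorem~\ref{thm:Nabla_not_str-exp_is_Daugavet} then forces $f$ to be a strongly exposed point of $B_{L_1(\mu)}$. Since the extreme (hence strongly exposed) points of $B_{L_1(\mu)}$ are exactly the functions of the form $\pm\frac{\1_B}{\mu(B)}$ for some atom $B$, and $A\subset\supp(f)$ up to a null set, one must have $B=A$ up to null sets, and therefore $f=\pm\frac{\1_A}{\mu(A)}$.

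The bulk of the argument is really Theorem~\ref{thm:Nabla_not_str-exp_is_Daugavet} combined with the well-known description of extreme points of $B_{L_1(\mu)}$, so the only step that requires real verification is the ``if'' direction. The one subtlety there is confirming that the level set $E=\{\phi>1-\delta\}$ is essentially disjoint from $A$; this is immediate once one uses the fact that $\phi$ must be essentially constant on $A$ and that $f\notin S$ precisely bounds that constant away from $1$. No issues arise from $\mu(E)$ being infinite, as one simply restricts to a subset of finite positive measure.
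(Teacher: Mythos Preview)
Your ``only if'' direction is correct and gives a clean alternative to the paper's argument: the paper instead shows directly that any $f$ with an atom $A$ in its support and $f\neq\pm\frac{\1_A}{\mu(A)}$ satisfies $\norm{f-\frac{\theta\1_A}{\mu(A)}}<2$ for the appropriate sign $\theta$, and then invokes Proposition~\ref{prop:nabla_distance_2_to_denting}. Your route through Theorem~\ref{thm:Nabla_not_str-exp_is_Daugavet} and the classification of extreme points of $B_{L_1(\mu)}$ is equally valid and arguably more conceptual.

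Your ``if'' direction, however, has a gap. The claim that $E=\{\phi>1-\delta\}$ has positive measure does \emph{not} follow from $\norm{\phi}_\infty=1$: consider $\phi=-\1_\Omega$. What $\norm{\phi}_\infty=1$ guarantees is only that $\{\abs{\phi}>1-\delta\}$ has positive measure. The fix is short: if $\{\phi>1-\delta\}$ has positive measure, proceed as you wrote; otherwise $\{\phi<-(1-\delta)\}$ has positive measure. If the latter set meets $\Omega\setminus A$ in positive measure, pick $E'$ there and set $g:=-\frac{\1_{E'}}{\mu(E')}$, which lies in $S$ and has support disjoint from $A$. If instead $\{\phi<-(1-\delta)\}$ is essentially contained in $A$, then $c<-(1-\delta)$, hence $\phi(-f)=-c>1-\delta>1-\alpha$, so $-f\in S$ and $\norm{f-(-f)}=2$. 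For comparison, the paper sidesteps all of this by identifying $L_1(\mu)$ with $L_1(\mu_{|\Omega\setminus A})\oplus_1\R$ and invoking Proposition~\ref{prop:nabla_points_l1_sums}.
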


\begin{proof}

If $\supp f$ contains an atom $A$, then either $f=g+\theta\alpha\1_A$ with $g\in L_1(\mu)$ non-zero, $\theta\in\{-1,1\}$ and $\alpha\in \left(0,\frac{1}{\mu(A)}\right)$; or $f=\frac{\theta}{\mu(A)}\1_A$ with $\theta\in\{-1,1\}$. In the first case, we have \begin{equation*}
    \norm{f-\frac{\theta}{\mu(A)}\1_A}=\int_{\Omega\backslash A}\abs{g} d\mu+(1-\alpha\mu(A))=2-2\alpha\mu(A)<2;
\end{equation*} so $f$ is at distance strictly less than $2$ to a denting point of $B_{L_1(\mu)}$ that is distinct from $f$, hence $f$ is not a $\nabla$-point by Proposition~\ref{prop:nabla_distance_2_to_denting}. In the second case, observe that $f=(0,1)$ in the Banach space $L_1(\mu_{\lvert \Omega\backslash A})\oplus_1 \R\equiv L_1(\mu)$, so Proposition~\ref{prop:nabla_points_l1_sums} yields that $f$ is a $\nabla$-point.
    
\end{proof}

Let $K$ be a compact Hausdorff space. It is well known that $C(K)$ has the Daugavet property if and only if $K$ has no isolated point (see e.g. \cite[Section~2, Example~(a)]{Werner01}). Building on \cite[Theorem~3.4]{AHLP}, the following result was proved in \cite[Corollary~4.3]{MPRZ}.

\begin{proposition}\label{prop:diametral_C(K)_characterization}

Let $f\in S_{C(K)}$. Then the following assertions are equivalent: 
 \begin{enumerate}
        \item $f$ is a super Daugavet point;

        \item $f$ is a $\Delta$-point;

        \item $f$ attains its norm at an accumulation point of $K$.
        
    \end{enumerate}

\end{proposition}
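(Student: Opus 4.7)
The implication $(1) \Rightarrow (2)$ is immediate, since any super Daugavet point is in particular a Daugavet point, and hence a $\Delta$-point. The implication $(2) \Rightarrow (3)$ is \cite[Theorem~3.4]{AHLP}, which I would take as given. The substantive content of the proposition is therefore $(3) \Rightarrow (1)$.

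For $(3) \Rightarrow (1)$, let $t_0 \in K$ be an accumulation point with $\abs{f(t_0)} = 1$; replacing $f$ by $-f$ if necessary, I may assume $f(t_0) = 1$. Fix a non-empty relatively weakly open subset $V$ of $B_{C(K)}$ and $\eps > 0$; the goal is to produce $g \in V$ with $\norm{f - g}_\infty \ge 2 - \eps$. Picking $g_0 \in V$ and shrinking $V$ to a basic weak neighborhood of $g_0$ inside it, I may assume
\[
V = \{g \in B_{C(K)} : \abs{\mu_i(g - g_0)} < \delta,\ i = 1, \ldots, n\}
\]
for finite regular Borel measures $\mu_1, \ldots, \mu_n$ on $K$ and some $\delta > 0$. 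The strategy is to locate a point $s$ very close to $t_0$ but distinct from it, and then modify $g_0$ to the value $-1$ on a tiny open neighborhood of $s$ having negligible $|\mu_i|$-mass for every $i$.

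To produce $s$, pick by continuity an open neighborhood $W$ of $t_0$ on which $f > 1 - \eps/2$. Because $K$ is Hausdorff and $t_0$ is an accumulation point, $W$ must contain infinitely many points distinct from $t_0$: otherwise, separating $t_0$ from each of the finitely many other points of $W$ would reduce some neighborhood of $t_0$ to $\{t_0\}$, contradicting accumulation. Each $|\mu_i|$ being a finite Borel measure, only finitely many of its atoms can carry mass $\ge \delta/(6n)$, so I may pick $s \in W \setminus \{t_0\}$ with $|\mu_i|(\{s\}) < \delta/(6n)$ for every $i$. By outer regularity, for each $i$ there is an open set $O_i \ni s$ with $|\mu_i|(O_i) < \delta/(3n)$; setting $U := O_1 \cap \cdots \cap O_n \cap (K \setminus \{t_0\})$ yields an open neighborhood of $s$ missing $t_0$ and satisfying $|\mu_i|(U) < \delta/(3n)$ for every $i$. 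Urysohn's lemma provides a continuous $\varphi : K \to [0,1]$ with $\varphi(s) = 1$ vanishing outside $U$, and I set $g := g_0 - \varphi(g_0 + \1_K)$. Then $g$ is a pointwise convex combination of $g_0$ and the constant $-1$, so $g \in B_{C(K)}$; the bound $\abs{\mu_i(g - g_0)} \le 2|\mu_i|(U) < \delta$ for each $i$ gives $g \in V$; and finally $\norm{f - g}_\infty \ge |f(s) - g(s)| = |f(s) + 1| > 2 - \eps/2$, closing the argument. The main subtle point is the simultaneous choice of $s$: a priori the atoms of the $\mu_i$ could cluster near $t_0$, but the fact that only finitely many atoms can carry mass above any fixed threshold, together with the infinitude of points of $K$ in every neighborhood of $t_0$ granted by accumulation, makes this possible.
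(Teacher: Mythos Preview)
Your argument for $(3)\Rightarrow(1)$ is correct. The point selection is sound: in a Hausdorff space an accumulation point has infinitely many neighbors in every neighborhood, only finitely many of which can be heavy atoms for the finitely many $\abs{\mu_i}$, so a suitable $s$ exists; outer regularity of the regular Borel measures representing $C(K)^*$ then produces the small open set $U$, and the Urysohn bump gives $g=(1-\varphi)g_0+\varphi(-\1_K)\in B_{C(K)}\cap V$ with $g(s)=-1$ and $f(s)>1-\eps/2$. All steps check.

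For comparison with the paper: note that the paper does not actually prove this proposition but attributes it to \cite[Corollary~4.3]{MPRZ}, building on \cite[Theorem~3.4]{AHLP} for the equivalence $(2)\Leftrightarrow(3)$. Your write-up therefore supplies a self-contained direct proof of $(3)\Rightarrow(1)$ where the paper relies on an external reference. The approach you take---perturbing an arbitrary $g_0$ near a carefully chosen non-atomic point $s$ close to $t_0$---is the natural one and is essentially what underlies the cited result, so there is no substantive methodological divergence; you have simply unpacked the citation. One cosmetic remark: the factor $K\setminus\{t_0\}$ in the definition of $U$ is harmless but unnecessary for the argument, and the bound $\delta/(3n)$ is stronger than needed (any bound below $\delta/2$ would do, since the conditions $\abs{\mu_i(g-g_0)}<\delta$ are checked one $i$ at a time).
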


We have seen in Example~\ref{expl:l_infty^n_nabla} that, similar to $\ell_1^n$, the space $\ell_\infty^n$ contains plenty of $\nabla$-points. But unlike the infinite dimensional $L_1(\mu)$ setting, we will now prove that Proposition~\ref{prop:diametral_C(K)_characterization} also provides a characterization for $\nabla$-points in infinite dimensional $C(K)$-spaces, so that all the diametral notions coincide in this context.

\begin{proposition}

    Let $K$ be an infinite compact Hausdorff space. If a function $f\in S_{C(K)}$ does not attain its norm at an accumulation point of $K$, then it is not a $\nabla$-point.
    
\end{proposition}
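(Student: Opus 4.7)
The plan is to directly exhibit a slice of $B_{C(K)}$ that does not contain $f$ but whose elements all stay at distance strictly less than $2$ from $f$. The slice will be built from evaluation functionals at the (necessarily finitely many) isolated points where $|f|=1$, together with evaluation at one additional point where $|f|<1$.

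First I would pin down the structure of the norming set. Let $M := \{k \in K : |f(k)| = 1\}$. By continuity $M$ is closed, and by the assumption on $f$ every point of $M$ is isolated in $K$. Since $M$ is a closed discrete subset of the compact space $K$, it must be finite, say $M = \{k_1,\dots,k_n\}$, and writing $\eps_i := f(k_i) \in \{-1,+1\}$, we have $n\ge1$ because a continuous function on a compact space attains its norm. Since each $\{k_i\}$ is clopen, $M$ is clopen, so $K\setminus M$ is a non-empty compact set (non-empty because $K$ is infinite while $M$ is finite). Therefore $s := \max_{k\in K\setminus M} |f(k)| < 1$. Pick any $k^* \in K\setminus M$.

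Next I would consider the functional
\begin{equation*}
    x^* := \tfrac{1}{n+1}\Bigl(\delta_{k^*} + \sum_{i=1}^n \eps_i\,\delta_{k_i}\Bigr) \in C(K)^*,
\end{equation*}
whose total variation norm is $1$ since $k^*,k_1,\dots,k_n$ are pairwise distinct. A direct computation gives
\begin{equation*}
    x^*(f) = \frac{f(k^*) + \sum_i \eps_i^2}{n+1} = \frac{n + f(k^*)}{n+1} \le \frac{n+s}{n+1} = 1 - \frac{1-s}{n+1}.
\end{equation*}
Hence for any $0 < \alpha < \tfrac{1-s}{n+1}$, the slice $S := S(x^*,\alpha)$ is a proper slice of $B_{C(K)}$ that does not contain $f$; it is non-empty because the continuous function equal to $\eps_i$ on $\{k_i\}$ and to $1$ on the clopen set $K\setminus M$ lies in $S$.

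Finally I would bound $\|f-h\|$ for arbitrary $h \in S$. Expanding $x^*(h) > 1-\alpha$ as
\begin{equation*}
    (1-h(k^*)) + \sum_{i=1}^n (1-\eps_i h(k_i)) < (n+1)\alpha,
\end{equation*}
each non-negative summand is strictly less than $(n+1)\alpha$, and since $\eps_i\in\{\pm1\}$ one checks that $|h(k_i)-\eps_i| = 1-\eps_i h(k_i) < (n+1)\alpha$. Hence $|f(k_i)-h(k_i)|<(n+1)\alpha$ for $i=1,\dots,n$, while for $k \in K\setminus M$ one has the trivial bound $|f(k)-h(k)| \le |f(k)| + |h(k)| \le s+1$. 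Taking $\alpha$ small enough that $(n+1)\alpha \le 1+s$ yields $\|f-h\|_\infty \le 1+s < 2$, so $\sup_{h\in S}\|f-h\| < 2$ and $f$ fails to be a $\nabla$-point. There is no genuine obstacle in this argument; the only point requiring care is the passage from the single scalar slice inequality to a pointwise estimate at each $k_i$, which works precisely because the weights $\eps_i$ match the extremal values $f(k_i)$ so that each term in the expanded sum is non-negative.
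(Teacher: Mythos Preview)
Your proof is correct and follows essentially the same approach as the paper: both arguments construct the slice from the functional $\frac{1}{|M|+1}\bigl(\delta_{k^*}+\sum_{k\in M}\operatorname{sign}f(k)\,\delta_k\bigr)$, where $M$ is the (finite, clopen) norming set and $k^*\in K\setminus M$, and then obtain the uniform bound $\|f-h\|\le 1+s<2$ for every $h$ in that slice. The only cosmetic differences are notational (the paper writes $H$, $\theta_x$, and $\eps=1-s$), and your extra condition $(n+1)\alpha\le 1+s$ is automatically satisfied once $\alpha<\frac{1-s}{n+1}$.
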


\begin{proof}

Let us consider the set $H:=\left\{x\in K: \abs{f(x)}=1 \right\}$. Since $K$ is compact and $f$ does not attain its norm at an accumulation point of $K$, we have that the set $H$ is finite and in particular clopen in $K$. As a consequence, $\abs{f}$ attains its maximum on $K\backslash H$, and there exists $\eps\in(0,1]$ such that $\abs{f_{\lvert K\backslash H}}\leq 1-\eps$. Now fix any $x_0\in K\backslash H$, and for every $x\in H$, let $\theta_x:=\sign f(x)$. We consider the functional $\varphi:=\frac{1}{\abs{H}+1}\left(\sum_{x\in H} \theta_x\delta_x+\delta_{x_0}\right)\in S_{C(K)^*}$. We have $$\varphi(f)=\frac{\abs{H}+f(x_0)}{\abs{H}+1}\leq \frac{\abs{H}+1-\eps}{\abs{H}+1}=1-\frac{\eps}{\abs{H}+1},$$ so $f\notin S\left(\varphi,\frac{\eps}{\abs{H}+1}\right)$. Now pick any $g\in S\left(\varphi,\frac{\eps}{\abs{H}+1}\right)$ and pick some $z\in K$. If $z\notin H$, then $\abs{f(z)-g(z)}\leq \abs{f(z)}+\abs{g(z)}\leq 2-\eps$. Else, $$\theta_zg(z)=(\abs{H}+1)\left(\varphi(g)-\left(\varphi(g)-\frac{\theta_zg(z)}{\abs{H}+1}\right)\right)>\abs{H}+1-\eps-\abs{H}=1-\eps,$$ and as a consequence, $$\abs{f(z)-g(z)}=\abs{\theta_z-g(z)}=\abs{1-\theta_zg(z)}<\eps\leq 1.$$ Thus $\norm{f-g}\leq 2-\eps$, and $f$ is not a $\nabla$-point.
    
\end{proof}

\begin{remark}

{\ }

\begin{enumerate}
    \item Let $L$ be an infinite locally compact Hausdorff space. We can show analogously that if a function $f\in S_{C_0(L)}$ does not attain its norm at an accumulation point of $L$, then it is not a $\nabla$-point. In particular, if $L$ does not have an accumulation point, then $C_0(L)$ does not contain $\nabla$-points.

    \item If $X$ is an $L_1$-predual, then $X^{**}$ is a $C(K)$ space. It follows that all the diametral notions -- including the notion of $\nabla$-points -- coincide in $X$. A characterization for those points was provided in \cite[Theorem~3.2]{MRZ22}.
    
\end{enumerate}

\end{remark}

\subsection{Projective tensor products}\label{subsec:tensor_products}

The transfer of $\Delta$-points (respectively Daugavet points) in projective tensor products of Banach spaces was first investigated in \cite{LP2021} (respectively \cite{DJRZ2022}). We summarize the results obtained in these two papers here:

\begin{proposition}
Let $X$ and $Y$ be Banach spaces, and let $x_0\in S_X$ and $y_0\in S_Y$.
\begin{itemize}
    \item[(a)] If $x_0$ is a $\Delta$-point, then $x_0\otimes y$ is a $\Delta$-point in $X\widehat{\otimes}_\pi Y$ for every $y\in S_Y$ \cite[Remark~5.4]{LP2021}.
    \item[(b)] If $x_0\otimes y_0$ is a $\Delta$-point in $X\widehat{\otimes}_\pi Y$ and $y_0$ is a strongly exposed point, then $x_0$ is a $\Delta$-point \cite[Proposition~2.12, (a)]{DJRZ2022}.
    \item[(c)] If $x_0$ and $y_0$ are both Daugavet points, then $x_0\otimes y_0$ is a Daugavet point in $X\widehat{\otimes}_\pi Y$ \cite[Proposition~2.12, (b)]{DJRZ2022}.
    \item[(d)] If $x_0\otimes y_0$ is a Daugavet point in $X\widehat{\otimes}_\pi Y$ and $y_0$ is a denting point, then $x_0$ is a Daugavet point \cite[Proposition~2.12, (c)]{DJRZ2022}.
\end{itemize}
\end{proposition}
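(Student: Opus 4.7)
The plan is to treat this proposition as a compilation of results already established in the cited references, so the write-up in each case reduces to invoking the corresponding statement. Still, I would want to indicate briefly the common mechanism, since all four items exploit the correspondence between slices of $B_{X\widehat{\otimes}_\pi Y}$ and slices of $B_X$ or $B_Y$, together with the identity $\|u\otimes v\|_\pi = \|u\|\|v\|$ valid on elementary tensors.

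For item (a), given a slice $S(B_{X\widehat{\otimes}_\pi Y}, T, \alpha)$ containing $x_0\otimes y$, where $T\in B_{L(X,Y^*)}$ is identified with a functional on the projective tensor product via $\langle T, u\otimes v\rangle = T(u)(v)$, the assumption reads $T(x_0)(y) > 1 - \alpha$. The functional $\varphi := T(\cdot)(y)$ lies in $B_{X^*}$ and satisfies $\varphi(x_0) > 1 - \alpha$, so $x_0$ belongs to the slice $S(\varphi,\alpha)$ of $B_X$. Applying the $\Delta$-point property of $x_0$ provides $u \in S(\varphi,\alpha)$ with $\|x_0 - u\| \geq 2 - \varepsilon$; then $u\otimes y$ lies in the original slice of $B_{X\widehat{\otimes}_\pi Y}$ and the projective norm estimate gives the required $2-\varepsilon$ lower bound for $\|x_0\otimes y - u\otimes y\|_\pi$. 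For item (b) I would carry out the converse lift: given a slice $S(B_X, x^*, \alpha)$ containing $x_0$, pick $y^* \in S_{Y^*}$ strongly exposing $y_0$ and form the operator $T := x^* \otimes y^*$ in $B_{L(X,Y^*)}$, for which $\langle T, x_0\otimes y_0\rangle > 1 - \alpha$. The $\Delta$-point property of $x_0\otimes y_0$ at this slice produces an element of $B_{X\widehat{\otimes}_\pi Y}$ close to $x_0\otimes y_0$ at distance close to $2$; approximating it by elementary tensors $u\otimes v$ and using that $y^*$ strongly exposes $y_0$ forces $v$ close to $y_0$, so the triangle inequality gives $\|x_0 - u\| \approx 2$ with $u \in S(x^*,\alpha)$ modulo controlled losses. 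Items (c) and (d) proceed along the same pattern, but working with arbitrary slices rather than slices through the point; the denting hypothesis in (d) replaces strong exposure and ensures that slices of $B_Y$ around $y_0$ can be made arbitrarily small in diameter, which is what pins down the companion factor when the Daugavet condition is invoked.

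The main obstacle, and what requires the careful bookkeeping of the cited papers, is the passage between the projective tensor norm of an element of $B_{X\widehat{\otimes}_\pi Y}$ and the elementary-tensor approximations that appear in the slices, since the relevant series decompositions need to be truncated and normalized without losing too much in the slice inequalities or the distance estimates. Because complete arguments are worked out in \cite{LP2021} and \cite{DJRZ2022}, I would simply cite them in the final write-up rather than reproduce the calculations.
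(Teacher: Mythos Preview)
Your proposal is correct and matches the paper's treatment: the proposition is stated there purely as a summary of cited results from \cite{LP2021} and \cite{DJRZ2022}, with no proof given, so your plan to simply invoke those references is exactly what the paper does. Your informal sketches of the underlying mechanisms are accurate extras that go beyond what the paper records; the only minor point to watch if you ever flesh out item~(a) is that the functional $\varphi = T(\cdot)(y)$ need not have norm exactly one, so a normalization step (with the attendant adjustment of~$\alpha$) is required before the $\Delta$-point definition applies.
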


Our goal in this subsection is to study similar stability results for $\nabla$-points. In particular, we will show that in order to get that $x_0\otimes y_0$ is a Daugavet point in \cite[Proposition~2.12, (b)]{DJRZ2022}, it suffices to assume that one of the points is a Daugavet point and that the other is a $\nabla$-point (see Proposition~\ref{prop: Daugavet + Nabla = Daugavet}). 

We begin by pointing out the following simple lemma, which is certainly well known to experts on Daugavet points, but for which we could not find an explicit reference.

\begin{lemma}\label{lem: symmetric Daugavet point}
   Let $X$ be a Banach space. If $x\in S_X$ is a Daugavet point, then for every slice $S$ of $B_X$ and for every $\varepsilon>0$, there exists $y\in S$ such that $\|x\pm y\|\geq 2-\varepsilon$.
\end{lemma}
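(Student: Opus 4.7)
The plan is to exploit the symmetry $x\leftrightarrow -x$: as $x$ is a Daugavet point, so is $-x$, because $\sup_{y\in S}\|{-x}-y\|=\sup_{y'\in S(-x^*,\alpha)}\|x-y'\|=2$ for every slice $S=S(x^*,\alpha)$. I would start by applying the Daugavet condition at $-x$ to the sub-slice $S(x^*,\alpha/2)\subseteq S$ to produce $y_2\in S(x^*,\alpha/2)$ with $\|x+y_2\|>2-\eta_0$ for a small parameter $\eta_0>0$ to be fixed later. A Hahn--Banach functional $f_2\in S_{X^*}$ norming $x+y_2$ then satisfies both $f_2(x)>1-\eta_0$ and $f_2(y_2)>1-\eta_0$.

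The core of the argument is a tilting: for a parameter $\mu>0$ to be chosen, set $h:=x^*+\mu f_2$, so that $\|h\|\geq h(y_2)>(1-\alpha/2)+\mu(1-\eta_0)$. The Daugavet condition at $x$ applied to the slice $S(h,\delta)$ (for a small $\delta>0$) then yields $y\in S(h,\delta)$ with $\|x-y\|>2-\varepsilon$. The inequality $h(y)>\|h\|-\delta$ combined with the trivial bounds $f_2(y)\leq 1$ and $x^*(y)\leq 1$ forces both coordinates of $y$ to be large:
\begin{equation*}
x^*(y)>1-\tfrac{\alpha}{2}-\mu\eta_0-\delta\quad\text{and}\quad f_2(y)>1-\eta_0-\frac{\alpha/2+\delta}{\mu}.
\end{equation*}

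It then remains to choose $\eta_0,\mu,\delta$ so that (i) $\mu\eta_0+\delta<\alpha/2$ (ensuring $x^*(y)>1-\alpha$, i.e. $y\in S$), and (ii) $2\eta_0+(\alpha/2+\delta)/\mu<\varepsilon$ (ensuring $\|x+y\|\geq f_2(x+y)>2-\varepsilon$ via the lower bounds on $f_2(x)$ and $f_2(y)$). A direct check shows that, for example, $\eta_0=\varepsilon/16$, $\mu=4\alpha/\varepsilon$ and $\delta=\alpha/8$ satisfy both simultaneously. Combined with $\|x-y\|>2-\varepsilon$ from the Daugavet step, this furnishes the desired $y\in S$.

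The main obstacle, as this sketch suggests, is precisely the parameter balance: $\mu$ must be large enough compared to $\alpha/\varepsilon$ to drive $f_2(y)$ close to $1$, yet the product $\mu\eta_0$ must stay below $\alpha$ to keep $y$ inside $S$. These two requirements are compatible only because $\eta_0$ is an independent free parameter that can be taken small relative to $\varepsilon$.
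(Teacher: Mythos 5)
Your argument is correct: the parameter check goes through ($\mu\eta_0+\delta=3\alpha/8<\alpha/2$ and $2\eta_0+(\alpha/2+\delta)/\mu=9\varepsilon/32<\varepsilon$ with your choices), and the two inequalities $x^*(y)>1-\alpha$ and $f_2(x+y)>2-\varepsilon$ indeed follow from $h(y)>\|h\|-\delta$ together with the trivial bounds $x^*(y)\le 1$ and $f_2(y)\le 1$. The route is genuinely different from the paper's, though it lives in the same circle of ideas. The paper's proof is two lines: it invokes \cite[Remark~2.3]{JRZ2022}, which says that for a Daugavet point $x$ every slice $S$ contains a further slice $\tilde S$ on which $\|x-u\|\ge 2-\varepsilon$ holds \emph{uniformly}, and then simply applies the Daugavet condition of $-x$ to $\tilde S$. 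You instead swap the roles of $x$ and $-x$ (first producing $y_2$ with $\|x+y_2\|$ large, then tilting $x^*$ by the norming functional $f_2$ of $x+y_2$ and applying the Daugavet condition of $x$ to the tilted slice), and you carry out the localization by hand rather than citing it; your tilting step is essentially an inline proof of the self-improvement behind the cited remark. What the paper's version buys is brevity and reusability of the uniform sub-slice statement; what yours buys is self-containedness and an explicit, checkable parameter bookkeeping. One small presentational point: since your $h=x^*+\mu f_2$ is not normalized, you should note (as the paper's definition of slice permits) that $\{y\in B_X:\ h(y)>\|h\|-\delta\}$ is still a legitimate slice to which the Daugavet condition applies.
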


\begin{proof}
    Let $x$ be a Daugavet point, $S$ a slice of $B_X$, and $\varepsilon>0$. Since $x$ is a Daugavet point, then by \cite[Remark~2.3]{JRZ2022}, we can find a slice $\tilde{S}\subset S$ such that 
    \begin{equation*}
\|x-u\|\geq 2-\varepsilon\qquad\text{for all $u\in \tilde{S}$}.
    \end{equation*}
    Note that $-x$ is also a Daugavet point, hence we can find a $y\in \tilde{S}$ such that $\|-x-y\|=\|x+y\|\geq 2-\varepsilon$. Therefore, $\|x\pm y\|\geq 2-\varepsilon$ and $y\in S$ as we wanted.
\end{proof}

\begin{remark}
Let us point out that there is no complete $\nabla$-analogue to Lemma~\ref{lem: symmetric Daugavet point}. Indeed, $(0,1)$ is a $\nabla$-point in $\ell_1^2$, but if one considers any slice $S$ of $B_{\ell_1^2}$ such that $(0,-1)\in S$, but $(0,1), (1,0), (-1,0)\notin S$, then clearly there exists $\eps>0$ such that $\|(1,0)+ y\|\leq2-\eps$ for every $y\in S$. However, the result does hold true for a $\nabla$-point $x$ and slices that contain neither $x$ nor $-x$.

\end{remark}

\begin{proposition}\label{prop: Daugavet + Nabla = Daugavet}
Let $X$ and $Y$ be Banach spaces. If $x\in S_X$ is a Daugavet point and $y\in S_Y$ is a $\nabla$-point, then $x\otimes y\in S_{X\widehat{\otimes}_\pi Y}$ is a Daugavet point.
\end{proposition}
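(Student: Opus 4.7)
The plan is to adapt the proof of the Daugavet--Daugavet case (part (c) above), replacing one of the two applications of the Daugavet condition with a single application of the $\nabla$-condition, and compensating for the lack of symmetry by invoking the symmetric version of the Daugavet lemma (Lemma~\ref{lem: symmetric Daugavet point}) in the other argument. Fix a slice $S(T,\alpha)$ of $B_{X\widehat{\otimes}_\pi Y}$ where $T \in \mathcal{L}(X,Y^*)$ has $\norm{T}=1$, and fix $\varepsilon>0$. First I pick $u_0 \in S_X$ with $\norm{T(u_0)} > 1-\delta$ for a small $\delta>0$, and set $h := T(u_0)/\norm{T(u_0)} \in S_{Y^*}$. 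I then split into two cases according to whether $y$ lies in the slice $S(B_Y,h,\eta)$ of $B_Y$ for a small $\eta>0$.

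If $y \in S(B_Y,h,\eta)$, then the functional $\psi:=T(\cdot)(y) \in X^*$ has norm close to $1$; applying the Daugavet property of $x$ to the slice of $B_X$ defined by $\psi/\norm{\psi}$, I obtain $u_1 \in B_X$ with $T(u_1)(y)$ close to $1$ (so that $u_1\otimes y \in S(T,\alpha)$) and $\norm{x-u_1}\ge 2-\varepsilon$. Then $w:=u_1\otimes y$ does the job, since $\norm{x\otimes y - u_1\otimes y}_\pi = \norm{x-u_1} \ge 2-\varepsilon$.

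If instead $y\notin S(B_Y,h,\eta)$, then I apply the $\nabla$-condition of $y$ on this slice to find $v_1 \in S(B_Y,h,\eta)$ with $\norm{y-v_1}\ge 2-\varepsilon'$, where $\varepsilon':=\varepsilon/4$. The functional $u \mapsto T(u)(v_1)$ on $X$ then has norm close to $1$ (because $T(u_0)(v_1) > (1-\eta)\norm{T(u_0)}$), and by the symmetric Daugavet property of $x$ I find $u_1 \in B_X$ with $T(u_1)(v_1)$ close to $1$ (so $u_1\otimes v_1 \in S(T,\alpha)$) and $\norm{x\pm u_1}\ge 2-\varepsilon'$. To bound $\norm{x\otimes y - u_1\otimes v_1}_\pi$ from below, I use Hahn--Banach to pick $f\in S_{X^*}$ with $f(x)+f(u_1)\ge 2-\varepsilon'$ and $g\in S_{Y^*}$ with $g(y)-g(v_1)\ge 2-\varepsilon'$; then $f(x),f(u_1)\ge 1-\varepsilon'$ and $g(y)\ge 1-\varepsilon'$, $g(v_1)\le -(1-\varepsilon')$, so the rank-one operator $f\otimes g \in S_{\mathcal{L}(X,Y^*)}$ yields
\begin{equation*}
\norm{x\otimes y - u_1\otimes v_1}_\pi \ge f(x)g(y)-f(u_1)g(v_1) \ge 2(1-\varepsilon')^2 \ge 2-\varepsilon.
\end{equation*}

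The main obstacle is reconciling the asymmetry between the $\nabla$-condition for $y$ (which gives only $\norm{y-v_1}$ large, and says nothing about $\norm{y+v_1}$) and what is needed for the final rank-one estimate. This is precisely what makes the symmetric Daugavet property of $x$ essential: using Hahn--Banach with $\norm{x+u_1}$ rather than $\norm{x-u_1}$ ensures that $f(u_1)$ is close to $+1$, while the $\nabla$-estimate for $y$ ensures that $g(v_1)$ is close to $-1$, so that $f(u_1)g(v_1)$ is close to $-1$ and its subtraction contributes the $+1$ needed for the lower bound of $2-\varepsilon$. The cascade of parameter choices ($\delta,\eta,\varepsilon'$ all small relative to $\alpha$ and $\varepsilon$) is then routine.
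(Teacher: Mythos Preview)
Your proposal is correct and follows essentially the same approach as the paper: both arguments use the symmetric Daugavet lemma for $x$, a case split according to whether $y$ lies in a suitable slice of $B_Y$, the $\nabla$-condition for $y$ in the case it does not, and the rank-one Hahn--Banach estimate $f(x)g(y)-f(u_1)g(v_1)\ge 2(1-\varepsilon')^2$ to conclude. The only structural difference is the order of operations: the paper applies the symmetric Daugavet lemma \emph{first} (picking $u_1$ with $\norm{x\pm u_1}\ge 2-\varepsilon$ in a slice determined by an initial $y_0$) and then splits on whether $y$ lies in the slice $S_2$ determined by $u_1$, whereas you split first on whether $y$ lies in the slice determined by your initial $u_0$, apply the $\nabla$-condition to find $v_1$ if not, and only then invoke the (symmetric) Daugavet property of $x$ in a slice determined by $v_1$; this reordering is inconsequential and the underlying argument is the same.
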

\begin{proof} We follow \cite[Proposition~2.12, (b)]{DJRZ2022}. Let $\varepsilon>0$ and $S:=S(B_{X\widehat{\otimes}_\pi Y}, B, \alpha)$ be an arbitrary slice. Our goal is to find a $z\in S$ such that $\|x\otimes y-z\|\geq 2-\varepsilon$. Find $x_0\in B_X$ and $y_0\in B_Y$ such that $B(x_0,y_0)>1-\alpha/2.$

    Consider first the following slice
    \begin{equation*}
S_1:=\left\{x\in B_X\colon B(x,y_0)>\sup_{u\in B_X} B(u,y_0)-\frac{\alpha}{4}\right\}.   
    \end{equation*}
    Since $x$ is a Daugavet point, by Lemma~\ref{lem: symmetric Daugavet point}, we can find $u_1\in S_1$ such that $\|x\pm u_1\|\geq 2-\varepsilon$.

    Now look at the slice
        \begin{equation*}
S_2:=\left\{y\in B_Y\colon B(u_1,y)>\sup_{v\in B_Y} B(u_1,v)-\frac{\alpha}{4}\right\}.   
    \end{equation*}

We consider two cases: (a) $y\in S_2$ and (b) $y\notin S_2$.
\begin{enumerate}
    \item[(a)] Assume that $y\in S_2$. We can then take $z:=u_1\otimes y$. Indeed, $\|x\otimes y- u_1\otimes y\|=\|x-u_1\|\cdot \|y\|\geq 2-\varepsilon$ and
    \begin{align*}
        B(u_1,y)&>\sup_{v\in B_Y} B(u_1,v)-\frac{\alpha}{4}\\
        &\geq B(u_1,y_0)-\frac{\alpha}{4}\\
        &>\sup_{u\in B_X} B(u,y_0)-\frac{\alpha}{4}-\frac{\alpha}{4}\\
        &\geq B(x_0,y_0)-\frac{\alpha}{2}\\
        &>1-\alpha.
    \end{align*}
    Hence, $z\in S$ and $\|x\otimes y-z\|\geq 2-\varepsilon$ as we wanted.
      \item[(b)] Assume that $y\notin S_2$. Then since $y$ is a $\nabla$-point we can find $v_2\in S_2$ such that $\|y-v_2\|\geq 2 -\varepsilon$. Similar computations as in (a) show that $B(u_1,v_2)>1-\alpha$, that is, $z:=u_1\otimes v_2\in S$. Finally, let us show that $\|x\otimes y - z\|\geq 2-\varepsilon$ also.

    Since $\|x+u_1\|\geq 2-\varepsilon$ and $\|y-v_2\|\geq 2-\varepsilon$, we can find $x^*\in S_{X^*}$ and $y^*\in S_{Y^*}$ such that 
      \begin{equation*}
      x^*(x+u_1)\geq 2-\varepsilon \qquad \text{and} \qquad  y^*(y-v_2)\geq 2-\varepsilon.
      \end{equation*}
      Therefore, $x^*(x),  x^*(u_1),  y^*(y), y^*(-v_2)\geq 1-\varepsilon$. Define a bilinear operator $B_0(x,y):=x^*(x)y^*(y)$ for all $x\in X$ and $y\in Y$. Then $\|B_0\|=1$ and
      \begin{align*}
          \|x\otimes y - u_1\otimes v_2\|&\geq B_0(x\otimes y - u_1\otimes v_2)\\
          &=x^*(x)y^*(y)-x^*(u_1)y^*(v_2)\\
          &=x^*(x)y^*(y)+x^*(u_1)y^*(-v_2)\\
          &\geq (1-\varepsilon)^2+(1-\varepsilon)^2=2(1-\varepsilon)^2.
      \end{align*}
\end{enumerate}
Hence, $x\otimes y$ is a $\nabla$-point.
\end{proof}



Although, we do not know whether an elementary tensor is a $\nabla$-point whenever both components are $\nabla$-points, we can at least say that such a point has to be far away from all denting points of the unit ball other than itself.

\begin{proposition}
   Let $X$ and $Y$ be Banach spaces. If $x\in S_X$ and $y\in S_Y$ are $\nabla$-points, then $\|x\otimes y -z\|=2$ for every denting point $z\in B_{X\widehat{\otimes}_\pi Y}$ with $z\neq x\otimes y$.
\end{proposition}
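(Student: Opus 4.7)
The plan is to reduce the problem to elementary tensors via the classical structural result that every denting point $z$ of $B_{X\widehat{\otimes}_\pi Y}$ is of the form $z=u\otimes v$ with $u\in\dent B_X$ and $v\in\dent B_Y$, and then to apply Proposition~\ref{prop:nabla_distance_2_to_denting} repeatedly. Since $u$ and $v$ are unit vectors, the condition $z\neq x\otimes y$ translates into $(u,v)\neq (x,y)$ and $(u,v)\neq (-x,-y)$. I will also use that $-x$ and $-y$ are themselves $\nabla$-points, which follows from the definition by observing that $S\mapsto -S$ is a bijection on the slices of $B_X$ (and $B_Y$) that exchanges the roles of $x$ and $-x$ (respectively $y$ and $-y$).

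I would then proceed by case analysis. If $u=x$, then $v\neq y$, and Proposition~\ref{prop:nabla_distance_2_to_denting} applied to the $\nabla$-point $y$ and the denting point $v$ yields $\|y-v\|=2$, hence
\begin{equation*}
\|x\otimes y-x\otimes v\|_\pi=\|x\|\|y-v\|=2.
\end{equation*}
If $u=-x$, then $v\neq -y$, so the same result applied to $-y$ gives $\|y+v\|=2$, and
\begin{equation*}
\|x\otimes y-(-x)\otimes v\|_\pi=\|x\otimes(y+v)\|_\pi=\|y+v\|=2.
\end{equation*}
The cases $v\in\{\pm y\}$ are handled symmetrically.

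The remaining case is when $u\notin\{\pm x\}$ and $v\notin\{\pm y\}$. Applying Proposition~\ref{prop:nabla_distance_2_to_denting} to each of the four pairs yields $\|x\pm u\|=\|y\pm v\|=2$. Given $\eps>0$, I can then find $x^*\in S_{X^*}$ with $x^*(x-u)>2-\eps$ (so $x^*(x)>1-\eps$ and $x^*(u)<-1+\eps$), and $y^*\in S_{Y^*}$ with $y^*(y+v)>2-\eps$ (so $y^*(y)>1-\eps$ and $y^*(v)>1-\eps$). The bilinear form $B(a,b):=x^*(a)y^*(b)$ has norm one on $X\times Y$, and since $x^*(u)$ is close to $-1$ while $y^*(v)$ is close to $+1$, the product $x^*(u)y^*(v)$ is negative with magnitude greater than $(1-\eps)^2$. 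This yields
\begin{equation*}
\|x\otimes y-u\otimes v\|_\pi\geq B(x,y)-B(u,v)=x^*(x)y^*(y)-x^*(u)y^*(v)>2(1-\eps)^2,
\end{equation*}
and letting $\eps\to 0$ gives the desired equality.

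The main obstacle is justifying the structural characterization of denting points of $B_{X\widehat{\otimes}_\pi Y}$ as elementary tensors of denting points. The analogue for extreme points is classical, but the denting-point upgrade requires a more delicate slice-diameter argument through the bilinear duality pairing; if such a result is not directly citable, it would need to be verified separately, most likely by showing that the defining bilinear form of a small-diameter slice around $u\otimes v$ induces small-diameter slices around $u$ in $B_X$ and around $v$ in $B_Y$.
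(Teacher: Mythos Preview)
Your proof is correct and follows essentially the same approach as the paper: reduce to elementary tensors $z=u\otimes v$ with $u,v$ denting, then apply Proposition~\ref{prop:nabla_distance_2_to_denting} in a case analysis, finishing the generic case with a rank-one bilinear form. The structural result you flag as the main obstacle is in fact directly citable --- it is \cite[Corollary~4]{Werner1987}, which the paper invokes --- so no separate verification is needed; your case organization is slightly cleaner than the paper's (you correctly note that $(u,v)=(-x,-y)$ forces $z=x\otimes y$ and is therefore excluded, whereas the paper treats this as a separate subcase).
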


\begin{proof}
Let $z$ be a denting point in $B_{X\widehat{\otimes}_\pi Y}$ such that $z\neq x\otimes y$. By \cite[Corollary~4]{Werner1987}, we know that $z=u\otimes v$, where $u$ is a denting point of $B_X$ and $v$ is a denting point of $B_Y$. We will conclude the result arguing by cases:

\begin{enumerate}
    \item[(1)] Assume first that $x\neq u$ and $y=v$ (the case $x=u$ and $y\neq v$ is similar). Since $x$ is a $\nabla$-point, then by Proposition~\ref{prop:nabla_distance_2_to_denting}, we have that $\|x-u\|=2$. Hence, 
\[
\|x\otimes y -z\|=\|x\otimes y - u\otimes y\|=\|x-u\|\cdot \|y\|=2.
\]

\item[(2)] Assume now that  $x\neq u$ and $y\neq v$. 

\begin{enumerate}
    \item[(a)] Suppose $x=-u$ and $y=-v$. Then 
    \[
\|x\otimes y-u\otimes v\|= \|(-u)\otimes(-v)-u\otimes v\|=2\|u\|\|v\|=2.     
    \]
    \item[(b)] Suppose $x\neq -u$ and $y=-v$ (the case $x=-u$ and $y\neq -v$ is similar). Since $-u$ is also a denting point, we can use Propositon~\ref{prop:nabla_distance_2_to_denting} to obtain that $\|x+u\|=2$. Thus
       \[
\|x\otimes y-u\otimes v\|= \|x\otimes(-v)-u\otimes v\|=\|x+u\|\|v\|=2.     
    \] 
    \item[(c)]  Suppose $x\neq -u$ and $y\neq -v$. By using Propositon~\ref{prop:nabla_distance_2_to_denting} twice we obtain that 
\[
\|x+u\|=2 \quad \text{ and } \quad \|y-v\|=2. 
\]
Let $\varepsilon>0$. We can find $x^*\in S_{X^*}$ and $y^*\in S_{Y^*}$ such that 
      \begin{equation*}
      x^*(x+u)\geq 2-\varepsilon \qquad \text{and} \qquad  y^*(y-v)\geq 2-\varepsilon.
      \end{equation*}
      Therefore, $x^*(x),  x^*(u),  y^*(y), y^*(-v)\geq 1-\varepsilon$. Define a bilinear operator $B(r,s):=x^*(r)y^*(s)$ for all $r\in X$ and $s\in Y$. Then $\|B\|=1$ and
      \begin{align*}
          \|x\otimes y - u\otimes v\|&\geq B(x\otimes y - u\otimes v)\\
          &=x^*(x)y^*(y)-x^*(u)y^*(v)\\
          &=x^*(x)y^*(y)+x^*(u)y^*(-v)\\
          &\geq (1-\varepsilon)^2+(1-\varepsilon)^2=2(1-\varepsilon)^2.
      \end{align*}
\end{enumerate}

    \end{enumerate}
Hence, $\|x\otimes y -z\|=2$ as we wanted.
\end{proof}

\begin{remark}

    In particular, note that it follows from Proposition~\ref{prop:nabla-characterization_RNP} that if $X\widehat{\otimes}_\pi Y$ has the \RNP, then actually $x\otimes y$ is a $\nabla$-point in $X\widehat{\otimes}_\pi Y$ whenever $x$ and $y$ are $\nabla$-points. 
    
\end{remark}

We now turn our attention to the converse of Proposition~\ref{prop: Daugavet + Nabla = Daugavet}. For this we state a lemma that can be proven similarly to \cite[Remark~2.3]{JRZ2022} and is left to the reader.

\begin{lemma}\label{lem: a slice full of nabla-points}
Let $X$ be a Banach space, let $x\in S_X$ be a $\nabla$-point in $X$, and let $\varepsilon>0$. If $S$ is a slice of $B_X$ that does not contain $x$, then there exists a slice $\tilde{S}$ of $B_X$ contained in $S$ such that $\|x-y\|>2-\varepsilon$ for every $y\in \tilde{S}$.
\end{lemma}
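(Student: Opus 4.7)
My plan is to adapt the classical subslice argument used for Daugavet points in \cite[Remark~2.3]{JRZ2022} to the $\nabla$-setting. Write $S=S(x^*,\alpha)$, so that the assumption $x\notin S$ translates to $x^*(x)\leq 1-\alpha$. First, I will fix an auxiliary small parameter $\eta>0$ (to be optimized later in terms of $\alpha$ and $\varepsilon$) and observe that the smaller slice $S(x^*,\eta)$ still avoids $x$ whenever $\eta<\alpha$. Applying the $\nabla$-condition for $x$ to this smaller slice yields some $y_0\in S(x^*,\eta)$ with $\|x-y_0\|>2-\eta$, and Hahn--Banach then provides $g^*\in S_{X^*}$ such that $g^*(x-y_0)>2-\eta$, forcing $g^*(x)>1-\eta$ and $g^*(y_0)<-1+\eta$.

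Next, I will define the candidate subslice
\[
\tilde{S}:=\{y\in B_X:\ (x^*-g^*)(y)>2-2\eta-\delta\}
\]
for a small $\delta>0$ to be chosen. A direct computation gives $(x^*-g^*)(y_0)>2-2\eta$, which shows both that $y_0\in\tilde{S}$ and that $\|x^*-g^*\|>2-2\eta>0$; after normalization, $\tilde{S}$ is therefore a genuine slice of $B_X$. The two properties to be verified on $y\in\tilde{S}$ are then immediate: using $g^*(y)\geq -1$ in the defining inequality yields $x^*(y)>1-2\eta-\delta$, so $y\in S$ as long as $2\eta+\delta<\alpha$; symmetrically, $x^*(y)\leq 1$ yields $g^*(y)<-1+2\eta+\delta$, whence
\[
\|x-y\|\geq g^*(x-y)>(1-\eta)-(-1+2\eta+\delta)=2-3\eta-\delta,
\]
which exceeds $2-\varepsilon$ as long as $3\eta+\delta<\varepsilon$. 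I will therefore pick $\eta<\min\{\alpha/4,\varepsilon/6\}$ and $\delta<\min\{\alpha/2,\varepsilon/2\}$ to close the argument.

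I do not anticipate any serious difficulty here: the entire content of the proof is concentrated in the single application of the $\nabla$-condition at the start, after which everything reduces to elementary bookkeeping with the pair of dual functionals $x^*$ and $g^*$. The only subtlety worth flagging is the need to ensure that the perturbed functional $x^*-g^*$ gives rise to a nondegenerate slice with the prescribed threshold; this is precisely what the lower bound $\|x^*-g^*\|>2-2\eta$, extracted from the distance $\|x-y_0\|$, guarantees.
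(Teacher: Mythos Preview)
Your proof is correct and follows exactly the approach the paper intends: the paper does not spell out the argument but refers the reader to \cite[Remark~2.3]{JRZ2022}, and your adaptation of that subslice trick---perturbing the slicing functional $x^*$ by a norming functional $g^*$ for $x-y_0$---is precisely what is meant. The bookkeeping with $\eta$ and $\delta$ is clean, and the verification that $\tilde S$ is a genuine nonempty slice via $\|x^*-g^*\|>2-2\eta$ is the right thing to check.
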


Note that Lemma~\ref{lem: a slice full of nabla-points} gives us the following: If $x\otimes y\in S_{X\widehat{\otimes}_\pi Y}$ is a $\nabla$-point, $S$ is a slice of $B_{X\widehat{\otimes}_\pi Y}$ not containing $x\otimes y$, and $\varepsilon>0$, then we can find an elementary tensor $u\otimes v\in S$ such that $\|x\otimes y- u\otimes v\|\geq 2-\varepsilon$.

\begin{proposition}
    Let $X$ and $Y$ be Banach spaces. If  $x\otimes y\in S_{X\widehat{\otimes}_\pi Y}$ is a $\nabla$-point and $y$ is a denting point, then $x\in S_X$ is a $\nabla$-point.
\end{proposition}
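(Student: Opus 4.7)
The plan is to adapt the argument for the Daugavet analogue \cite[Proposition~2.12, (c)]{DJRZ2022}, substituting the remark following Lemma~\ref{lem: a slice full of nabla-points} for \cite[Remark~2.3]{JRZ2022} in order to exploit the $\nabla$-property of $x\otimes y$ along slices that avoid it.

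Fix $\varepsilon>0$ and a slice $S(B_X,x^*,\alpha)$ not containing $x$. Using that $y$ is a denting point, I would first select $y^*\in S_{Y^*}$ and $\gamma\in(0,\alpha]$ with $y^*(y)>1-\gamma$ and $\diam S(B_Y,y^*,\gamma)<\varepsilon/2$. The natural object to consider is then the norm-one functional $B:=x^*\otimes y^*$ on $X\widehat{\otimes}_\pi Y$ together with its slice $S(B_{X\widehat{\otimes}_\pi Y},B,\gamma)$: the inequality $B(x\otimes y)=x^*(x)y^*(y)\le 1-\alpha\le 1-\gamma$ shows $x\otimes y$ lies outside this slice, so the remark following Lemma~\ref{lem: a slice full of nabla-points} produces an elementary tensor $u\otimes v$ in the slice with $\|x\otimes y-u\otimes v\|\ge 2-\varepsilon/2$.

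From $x^*(u)y^*(v)>1-\gamma$ together with $|x^*(u)|,|y^*(v)|\le 1$, and after replacing $(u,v)$ by $(-u,-v)$ if necessary (which preserves $u\otimes v$), we may assume $x^*(u)>1-\gamma$ and $y^*(v)>1-\gamma$. Hence $u\in S(B_X,x^*,\alpha)$ and $v\in S(B_Y,y^*,\gamma)$; the latter forces $\|y-v\|<\varepsilon/2$ by the diameter condition. The usual triangle estimate $\|x\otimes y-u\otimes v\|\le\|x-u\|\|y\|+\|u\|\|y-v\|$ then delivers $\|x-u\|\ge 2-\varepsilon$, completing the verification that $x$ is a $\nabla$-point.

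The only real subtlety is the dual role played by $\gamma$: it has to be small enough to make the denting property bite (through the diameter condition) while also small enough relative to $\alpha$ to keep $x\otimes y$ outside $S(B_{X\widehat{\otimes}_\pi Y},B,\gamma)$ and to force the recovered $u$ into $S(B_X,x^*,\alpha)$. The single inequality $\gamma\le\alpha$ reconciles both constraints and makes the estimates line up.
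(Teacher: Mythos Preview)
Your argument is correct and follows essentially the same route as the paper's proof: both use the bilinear form $B=x^*\otimes y^*$, exclude $x\otimes y$ from the resulting tensor slice, invoke Lemma~\ref{lem: a slice full of nabla-points} to obtain a far elementary tensor $u\otimes v$, and then unwind via the identity $x\otimes y-u\otimes v=(x-u)\otimes y+u\otimes(y-v)$; your single parameter $\gamma\le\alpha$ plays the role of the paper's $\min\{\alpha,\beta\}$, and you are in fact slightly more careful in handling the possible sign ambiguity of $(u,v)$. One minor imprecision: the intermediate bound $x^*(x)y^*(y)\le 1-\alpha$ can fail when $x^*(x)<0$, but the conclusion $B(x\otimes y)\le 1-\gamma$ that you actually need still holds once $\gamma<1$ (equivalently, one may assume $\alpha\le 1$ without loss of generality).
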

\begin{proof} We follow \cite[Proposition~2.12, (c)]{DJRZ2022}. Let $S(B_X, x^\ast, \alpha)$ be such that $x\notin S(B_X, x^\ast, \alpha)$ and $\varepsilon>0$. Since $y$ is a denting point, we can find a slice $S(B_Y, y^*, \beta)$ such that $y\in S(B_Y, y^*, \beta)$ and $\text{diam}(S(B_Y, y^*, \beta))\leq \varepsilon$. 

    Define the bilinear form $B(u,v)=x^*(u)y^*(v)$ for all $u\in X$ and $v\in Y$. Consider the slice $S(B_{X\widehat{\otimes}_\pi Y}, B, \gamma)$, where $\gamma=\min\{\alpha,\beta\}$. Since $x\otimes y$ is a $\nabla$-point and $x\otimes y\notin S(B_{X\widehat{\otimes}_\pi Y}, B, \gamma)$, we can find, by Lemma~\ref{lem: a slice full of nabla-points}, an elementary tensor $u_0\otimes v_0\in S(B_{X\widehat{\otimes}_\pi Y}, B, \gamma)$ such that
    \begin{equation*}
    \|x\otimes y - u_0\otimes v_0\|\geq 2-\varepsilon.
    \end{equation*}
Observe that
\begin{align*}
    2-\varepsilon&\leq \|x\otimes y - u_0\otimes v_0\|\\
    &=\|(x-u_0)\otimes y + u_0\otimes (y-v_0)\|\\
    &\leq \|x-u_0\|+\|y-v_0\|.
\end{align*}
Since $u_0\otimes v_0\in S(B_{X\widehat{\otimes}_\pi Y}, B, \gamma)$, we have that 
\begin{equation*}
x^*(u_0)y^*(v_0)=B(u_0,v_0)>1-\gamma=1-\min\{\alpha,\beta\}.
\end{equation*}
Therefore, $x^*(u_0)>1-\alpha$ and $y^*(v_0)>1-\beta$, that is, $u_0\in S(B_X, x^\ast, \alpha)$ and $v_0\in S(B_Y, y^*, \beta)$. Note that $\|y-v_0\|<\varepsilon$, because $\text{diam}(S(B_Y, y^*, \beta))\leq \varepsilon$. Finally, 
\begin{equation*}
\|x-u_0\|\geq 2-\varepsilon - \|y-v_0\|>2-2\varepsilon,
\end{equation*}
hence $x$ is a $\nabla$-point as we wanted to show.
\end{proof}

We will end this section with some results on super $\Delta$-points, and with some observations on super Daugavet points. We first show that super $\Delta$-points pass easily to projective tensor products.

\begin{proposition}
  Let $X$ and $Y$ be Banach spaces. If $x$ is a super $\Delta$-point, then $x\otimes y\in S_{X\widehat{\otimes}_\pi Y}$ is a super $\Delta$-point for every $y\in S_Y$.  
\end{proposition}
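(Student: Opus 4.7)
The plan is to reduce the super $\Delta$-condition on $x \otimes y$ to the super $\Delta$-condition on $x$ via an auxiliary linear map. Specifically, I would introduce $\phi : X \to X \widehat{\otimes}_\pi Y$ defined by $\phi(u) := u \otimes y$. This is a bounded linear map of norm $\|y\| = 1$, so $\phi(B_X) \subset B_{X \widehat{\otimes}_\pi Y}$, and in particular $\phi(x) = x \otimes y$.

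The first key step is to verify that $\phi$ is weak-to-weak continuous. Every continuous linear functional on $X \widehat{\otimes}_\pi Y$ corresponds to a bounded bilinear form $B : X \times Y \to \R$, and the composition $B \circ \phi$ sends $u \mapsto B(u,y)$, which is a bounded linear functional on $X$. This is the standard identification $(X \widehat{\otimes}_\pi Y)^* = \mathcal{L}(X, Y^*)$, and is immediate.

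Next, given any relatively weakly open subset $W$ of $B_{X \widehat{\otimes}_\pi Y}$ containing $x \otimes y$, the set $V := \phi^{-1}(W) \cap B_X$ is a relatively weakly open subset of $B_X$ (by weak-to-weak continuity of $\phi$) that contains $x$. Applying the super $\Delta$-property of $x$, we get $\sup_{u \in V} \|x - u\| = 2$. For each such $u \in V$, the element $u \otimes y$ lies in $W$, and
\begin{equation*}
\|x \otimes y - u \otimes y\| = \|(x - u) \otimes y\| = \|x - u\| \cdot \|y\| = \|x - u\|.
\end{equation*}
Hence $\sup_{z \in W} \|x \otimes y - z\| \geq \sup_{u \in V}\|x-u\| = 2$, which gives the super $\Delta$-condition at $x \otimes y$.

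There is no real obstacle in this argument; the whole proof essentially amounts to observing that $\phi$ is a weak-to-weak continuous isometric embedding of $X$ (restricted to $B_X$) into $B_{X \widehat{\otimes}_\pi Y}$ sending $x$ to $x \otimes y$, so that weakly open neighborhoods of $x \otimes y$ pull back to weakly open neighborhoods of $x$ in $B_X$. This mirrors the $\Delta$-point transfer result \cite[Remark~5.4]{LP2021}, simply replacing slices by relatively weakly open sets.
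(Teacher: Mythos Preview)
Your proof is correct and follows essentially the same approach as the paper: both arguments rest on the fact that the map $u \mapsto u \otimes y$ is a linear isometry that is weak-to-weak continuous. The only cosmetic difference is that the paper invokes the net characterization of super $\Delta$-points from \cite[Proposition~3.4]{MPRZ} (pushing forward a net $(x_\alpha)$ to $(x_\alpha \otimes y)$), whereas you work directly with the definition by pulling back relatively weakly open sets through $\phi$.
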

\begin{proof}
 Let $y\in S_Y$. Since $x$ is a super $\Delta$-point, then by \cite[Proposition~3.4, (2)]{MPRZ}, there is a net $(x_\alpha)$ in $B_X$ that converges to $x$ weakly and such that $\|x_\alpha-x\|\rightarrow 2$. 

 Take $z_\alpha:=x_\alpha\otimes y$. Clearly, $(z_\alpha)\subset B_{X\widehat{\otimes}_\pi Y}$ and
 \begin{equation*}
 \|x\otimes y - x_\alpha\otimes y\|=\|x_\alpha-x\|\rightarrow 2.
 \end{equation*}
 Finally, let us show that $z_\alpha$ converges weakly to $x\otimes y$. Indeed, let $T\in (X\widehat{\otimes}_\pi Y)^*= \mathcal{L}(Y,X^*)$. Then
 \begin{equation*}
T(x_\alpha\otimes y)=\langle Ty, x_\alpha\rangle \rightarrow \langle Ty,x\rangle = T(x\otimes y),
 \end{equation*}
 because $x_\alpha$ converges to $x$ weakly, and the conclusion follows.
\end{proof}

We do not know whether super Daugavet points pass to the projective tensor product similarly to Daugavet points. However, let us point out that there is an analogue to Lemma~\ref{lem: symmetric Daugavet point} for those points.

\begin{lemma}\label{lem:symmetric_super-Daugavet point}

    Let $X$ be a Banach space, and let $x\in S_X$. If $x$ is a super Daugavet point, then for every relatively weakly open subset $V$ of $B_X$, and for every $\varepsilon>0$, there exists $y\in V$ such that $\|x\pm y\|\geq 2-\varepsilon$. In particular, we can find for every $y\in B_X$ a net $(y_a)$ in $ S_X$ that converges weakly to $y$ and such that $\norm{x\pm y_a}\to 2$.
    
\end{lemma}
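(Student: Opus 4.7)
The plan is to mirror the proof of Lemma~\ref{lem: symmetric Daugavet point}, with slices replaced by relatively weakly open subsets of $B_X$. The core intermediate step will be to establish a super analogue of \cite[Remark~2.3]{JRZ2022}: for every super Daugavet point $x$, every non-empty relatively weakly open subset $V$ of $B_X$, and every $\varepsilon>0$, there is a non-empty relatively weakly open subset $\tilde V\subseteq V$ such that $\|x-u\|\geq 2-\varepsilon$ for every $u\in\tilde V$.

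To produce $\tilde V$, I would apply the super Daugavet property to choose $y\in V$ with $\|x-y\|\geq 2-\delta$ for $\delta:=\varepsilon/3$; Hahn--Banach then yields $z^*\in S_{X^*}$ with $z^*(x-y)\geq 2-\delta$, which forces both $z^*(x)\geq 1-\delta$ and $z^*(y)\leq -1+\delta$. I would then take
\begin{equation*}
\tilde V:=V\cap\{u\in B_X:\ z^*(u)<-1+2\delta\},
\end{equation*}
which is relatively weakly open in $B_X$ and contains $y$; any $u\in\tilde V$ satisfies $\|x-u\|\geq z^*(x)-z^*(u)>2-3\delta=2-\varepsilon$. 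Granted this, the first assertion of the lemma is immediate: since the super Daugavet condition is clearly invariant under $x\mapsto -x$, applying the super Daugavet property for $-x$ to $\tilde V$ produces $y\in\tilde V$ with $\|x+y\|\geq 2-\varepsilon$, and this $y$ witnesses $\|x\pm y\|\geq 2-\varepsilon$.

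For the \emph{in particular} statement, given $y\in B_X$ I would index a net by pairs $(W,\varepsilon)$, where $W$ ranges over weakly open neighborhoods of $y$ in $X$ and $\varepsilon>0$, ordered in the natural way. Given such a pair, I would first shrink $W$ to a smaller weakly open neighborhood $W'$ of $y$ admitting a norm buffer, i.e.\ with $W'+\rho B_X\subseteq W$ for some $\rho>0$ (this is straightforward from the standard sub-basis description of the weak topology). Applying the first assertion to $W'\cap B_X$ with parameter $\tau:=\min(\varepsilon/2,\rho)$ produces $u\in W'\cap B_X$ with $\|x\pm u\|\geq 2-\tau$. The inequality $\|u\|\geq 1-\tau$ allows the normalization $y_{W,\varepsilon}:=u/\|u\|\in S_X$, which satisfies $\|y_{W,\varepsilon}-u\|\leq \tau\leq\rho$, hence $y_{W,\varepsilon}\in W$, and $\|x\pm y_{W,\varepsilon}\|\geq 2-2\tau\geq 2-\varepsilon$. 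The resulting net converges weakly to $y$ and witnesses $\|x\pm y_a\|\to 2$.

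The conceptual crux is the first step: converting a single witness $y\in V$ with $\|x-y\|$ near $2$ into an entire weakly open subset on which the distance to $x$ stays uniformly near $2$, via a functional $z^*$ that separates $x$ from $y$ optimally. I expect the main technical obstacle to be the bookkeeping in the final normalization step, where the weak neighborhood of $y$, the norm perturbation from $u\mapsto u/\|u\|$, and the final $(2-\varepsilon)$-bound must all be balanced simultaneously; the pre-shrinking of $W$ to $W'$ with a norm buffer $\rho$ is the device that reconciles the three constraints.
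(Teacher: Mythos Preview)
Your proof is correct and follows essentially the same strategy as the paper: isolate a non-empty relatively weakly open subset of $V$ on which $\|x-\cdot\|>2-\varepsilon$, then apply the super Daugavet property of $-x$ there. The only difference is cosmetic: the paper invokes \cite[Proposition~3.4]{MPRZ} to note directly that $\Delta_\varepsilon(x):=\{u\in B_X:\|x-u\|>2-\varepsilon\}$ is relatively weakly open and takes $\tilde V=V\cap\Delta_\varepsilon(x)$, whereas you rebuild a smaller such set by hand via a single separating functional; for the net, the paper simply defers to the construction in \cite[Proposition~3.4]{MPRZ}, while you spell out the normalization and buffer argument explicitly.
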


\begin{proof}

    Let \begin{equation*}
        \Delta_\eps(x):=\{y\in B_X:\ \norm{x-y}>2-\eps\}.
    \end{equation*} It was observed in \cite[Proposition~3.4]{MPRZ} that the set $\Delta_\eps(x)$ is relatively weakly open in $B_X$. So as $x$ is a super Daugavet point, we get that $W:=V\cap \Delta_\eps(x)$ is a non-empty relatively weakly open subset of $B_X$. Thus $-W\cap\Delta_\eps(x)$ is also non-empty, and any element $y$ in this set satisfies $\norm{x\pm y}>2-\eps$. The desired net can then be constructed analogously to \cite[Proposition~3.4]{MPRZ}.
    
\end{proof}


\section{Renormings}\label{section:renormings}

Our starting point will be the observation that every Banach space can easily be renormed with a $\nabla$-point. Indeed, it immediately follows from Proposition~\ref{prop:nabla_points_l1_sums} and Example~\ref{expl:l1_nabla} that the addition of an ``$\ell_1$-corner" to the unit ball of any given Banach space automatically produces a $\nabla$-point. We will then combine this simple idea with an adapted version of the renorming from \cite[Theorem~2.4]{DKR+} that was already used in \cite{AALMPPV} to produce a renorming of $\ell_2$ with a super $\Delta$-point in the space and its dual.

\begin{proposition}\label{prop:nabla_renorming}

    Every Banach space can be renormed with a $\nabla$-point.
    
\end{proposition}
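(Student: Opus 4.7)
The plan is to realize any Banach space as an $\ell_1$-sum (up to equivalent renorming) and then apply Proposition~\ref{prop:nabla_points_l1_sums} together with the fact that $1$ is trivially a $\nabla$-point of $\mathbb{R}$ (cf.\ Example~\ref{expl:l1_nabla}). The renorming I would use is the most naive one: collapse the direction of some norming pair into an $\ell_1$-corner.

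More precisely, let $X$ be a Banach space; the case $\dim X=1$ being trivial, assume $\dim X\geq 2$. Pick any $e\in S_X$ and, by Hahn--Banach, a functional $e^*\in S_{X^*}$ with $e^*(e)=1$. Let $Y:=\ker e^*$, which is a closed hyperplane of $X$ complemented by $\spn\{e\}$ via the continuous projection $P\colon x\mapsto e^*(x)e$. Every $x\in X$ thus decomposes uniquely as $x=e^*(x)e+(x-e^*(x)e)$ with the second summand in $Y$. On this decomposition, define
\begin{equation*}
    \nnorm{x}:=\abs{e^*(x)}+\norm{x-e^*(x)e}\qquad(x\in X).
\end{equation*}

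The first step would be to verify that $\nnorm{\cdot}$ is an equivalent norm on $X$. Subadditivity and homogeneity are immediate, while the equivalence follows from the easy estimates $\norm{x}\leq \nnorm{x}\leq 3\norm{x}$ coming from $\abs{e^*(x)}\leq \norm{x}$ and $\norm{x-e^*(x)e}\leq \norm{x}+\abs{e^*(x)}\leq 2\norm{x}$. By construction, the linear map $x\mapsto(e^*(x),x-e^*(x)e)$ is an isometric isomorphism from $(X,\nnorm{\cdot})$ onto $\mathbb{R}\oplus_1 Y$, sending $e$ to $(1,0)$.

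The second and final step is to invoke Proposition~\ref{prop:nabla_points_l1_sums}: the scalar $1\in S_{\mathbb{R}}$ is trivially a $\nabla$-point of $B_{\mathbb{R}}=[-1,1]$ (any slice not containing $1$ is contained in $[-1,1-\alpha)$ for some $\alpha>0$, hence has an element arbitrarily close to $-1$, at distance arbitrarily close to $2$ from $1$). Therefore $(1,0)$ is a $\nabla$-point of $\mathbb{R}\oplus_1 Y$, and transporting this via the isomorphism yields that $e$ is a $\nabla$-point in $(X,\nnorm{\cdot})$, as required.

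There is essentially no obstacle: the only mild subtlety is the justification that an arbitrary Banach space admits such a splitting, which is resolved by Hahn--Banach providing a norm-one supporting functional at any prescribed unit vector and hence a complemented hyperplane. The proof is then a one-line reduction to the elementary $\ell_1$-case already recorded in Example~\ref{expl:l1_nabla}.
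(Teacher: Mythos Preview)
Your proof is correct and follows essentially the same route as the paper: realize $X$ (up to renorming) as an $\ell_1$-sum $\R\oplus_1 Y$ with a hyperplane $Y$, and invoke Proposition~\ref{prop:nabla_points_l1_sums} together with the trivial fact that $1$ is a $\nabla$-point in $\R$. The only cosmetic difference is that the paper treats the finite-dimensional case by a direct appeal to Example~\ref{expl:l1_nabla} for $\ell_1^n$, whereas you handle all dimensions $\geq 2$ uniformly by the explicit Hahn--Banach splitting; your version is slightly more detailed but the underlying argument is the same.
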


\begin{proof}

     For finite dimensional spaces, the statement is clear, as we have already seen in Example~\ref{expl:l1_nabla} that $\ell_1^n$ contains a $\nabla$-point for every $n\in \N$. So let $X$ be an infinite dimensional Banach space. Then $X$ is isomorphic to $Y\oplus_1 \R$ for any given co-dimension $1$ subspace $Y$ of $X$. Since $1$ is a $\nabla$-point in $\R$, we get from Proposition~\ref{prop:nabla_points_l1_sums} that $(0,1)$ is a $\nabla$-point in $Y\oplus_1 \R$, and the conclusion follows.  

  \end{proof}

Let $X$ be a Banach space with a Schauder basis $(e_n)$. Recall that $(e_n)$ is said to be \emph{unconditional} if the series $x:=\sum_{n\geq 1}a_ne_n$ converges unconditionally for every $x\in X$. Following \cite{AlbiacKalton} we will say that $(e_n)$ is \emph{1-unconditional} if for every $(a_n),(b_n)\in c_{00}$ with $\abs{a_n}\leq \abs{b_n}$ we have $$\norm{\sum_{n\geq 1}a_ne_n}\leq \norm{\sum_{n\geq 1}b_ne_n}.$$

The following theorem is the main result of the section. 

\begin{theorem}\label{thm:super_renorming}
    Let $X$ be an infinite dimensional Banach space with an unconditional weakly null Schauder basis $(e_n)$ and biorthogonal functionals $(e_n^*)$. Then there exists an equivalent
    norm $\nnorm{\cdot}$ on $X$ such that \begin{enumerate}
    
        \item\label{item:e1_super-Daugavet} $e_1$ is a super Daugavet point in $(X,\nnorm{\cdot})$;

        \item\label{item:e1*_weak*-super-Daugavet} $e_1^*$ is a weak$^*$ super Daugavet point in $(E,\nnorm{\cdot})$, where $E:=\cspan\{e_n^*\}$.

    \end{enumerate}

\end{theorem}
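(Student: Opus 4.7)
My plan is to combine the $\ell_1$-corner idea underlying Proposition~\ref{prop:nabla_renorming} with an adapted form of the super $\Delta$-renorming of $\ell_2$ from \cite[Section~3]{AALMPPV}, in order to produce a norm $\nnorm{\cdot}$ under which $e_1$ is simultaneously a $\nabla$-point and a super $\Delta$-point, and in fact satisfies the stronger super Daugavet condition. The point is that the super $\Delta$-renorming already handles weak neighborhoods of $e_1$, and adding a sharp $\ell_1$-corner at $e_1$ on top of it should handle weak open sets disjoint from $e_1$ in an analogous fashion, yielding the full super Daugavet property.

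Concretely, I would define $\nnorm{\cdot}$ via a dual norming set $F\subset X^*$ of the form $F=F_{\mathrm{corner}}\cup F_{\mathrm{bump}}$. The first piece $F_{\mathrm{corner}}$ contains $\pm e_1^*$ together with a family of functionals $\pm e_1^*\pm\lambda_n e_n^*$ with carefully chosen weights $\lambda_n\to 1$; this produces an $\ell_1$-corner at $\pm e_1$ in the new unit ball, so that any $y$ with $e_1^*(y)$ bounded away from $1$ is automatically at $\nnorm{\cdot}$-distance close to $2$ from $e_1$. The second piece $F_{\mathrm{bump}}$ is a bounded collection of functionals supported on $\cspan\{e_n:n\geq 2\}$, obtained by a $1$-unconditional hull construction based on the original norm, designed so that vectors $e_1+\mu e_n$ (for large $n$ and appropriate $\mu$) lie in $B_X^{\nnorm{\cdot}}$; by weak nullness of $(e_n)$, these vectors populate every relatively weakly open neighborhood of $e_1$.

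Given this setup, I would first verify equivalence of norms and that $\nnorm{e_1}=\nnorm{e_1^*}=1$. Then, for the super Daugavet property of $e_1$ in $(X,\nnorm{\cdot})$, I would take an arbitrary non-empty relatively weakly open subset $V\subset B_X^{\nnorm{\cdot}}$ and argue in two cases. If some $y\in V$ satisfies $e_1^*(y)\leq 1-\delta$ for a fixed $\delta>0$, then pairing with the suitable functional from $F_{\mathrm{corner}}$ gives $\nnorm{e_1-y}\geq 2-c\delta$; otherwise $V$ is, up to a small perturbation, a weak neighborhood of $e_1$, and I would use the weak nullness of $(e_n)$ to pick a large index $n$ and a scale $\mu$ with $e_1+\mu e_n\in V$ and $\nnorm{(e_1+\mu e_n)-e_1}=\abs{\mu}\nnorm{e_n}$ arbitrarily close to $2$, the unconditionality of the basis ensuring that the perturbed vector remains in the unit ball. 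The weak$^*$ super Daugavet property of $e_1^*$ in $E$ would then follow by a symmetric dual argument, since the restriction of the dual norm to $E$ inherits the same corner-plus-bump structure and weak$^*$-open subsets of $E$ are generated by evaluations at basis vectors $e_n\in X$.

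The main obstacle I expect is the simultaneous calibration of the weights $\lambda_n$ and the bump functionals defining $F_{\mathrm{bump}}$: the corner must be sharp enough to yield the $\nabla$-type estimate, yet shallow enough that the bump perturbations $e_1+\mu e_n$ still fit inside $B_X^{\nnorm{\cdot}}$, and the construction must additionally be symmetric enough under duality to yield the weak$^*$ version for $e_1^*$ in $E$. This is where the weak nullness and unconditionality of $(e_n)$ play their essential roles: weak nullness supplies the directions along which weak neighborhoods of $e_1$ open up, while unconditionality keeps both the primal and dual unit balls well-behaved under the mixed functionals defining $F$, allowing the same functionals to witness both properties at once.
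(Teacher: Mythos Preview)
Your proposal has a genuine gap in the case analysis, and the underlying heuristic ``$\nabla$-point plus super $\Delta$-point yields super Daugavet point'' does not hold without additional structure.

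First, the corner estimate you state is simply false. An $\ell_1$-corner at $e_1$ produced by functionals of the form $\pm e_1^*\pm\lambda_n e_n^*$ does \emph{not} force every $y$ with $e_1^*(y)$ bounded away from $1$ to be at $\nnorm{\cdot}$-distance close to $2$ from $e_1$. The point $y=0$ already satisfies $e_1^*(y)=0$ yet $\nnorm{e_1-y}=1$; more generally $y=(1-\delta)e_1$ lies at distance $\delta$ from $e_1$. What an $\ell_1$-corner actually gives is that $e_1$ is a $\nabla$-point (distance $2$ to every \emph{slice} not containing it), and this is a statement about slices, not about individual points with small first coordinate.

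Second, and more seriously, your dichotomy does not cover the super Daugavet condition. A non-empty relatively weakly open $V\subset B_X$ all of whose points satisfy $e_1^*(y)>1-\delta$ is in no sense ``up to small perturbation a weak neighborhood of $e_1$''. In your own construction the vectors $e_1+\mu e_n$ are supposed to lie in the unit ball with $e_1^*(e_1+\mu e_n)=1$, so small weak neighborhoods of such points (and convex combinations thereof) give plenty of weakly open sets lying entirely in the slab $\{e_1^*>1-\delta\}$ yet containing neither $e_1$ nor any point of the form $e_1+\mu e_m$. Your bump mechanism has nothing to say about these sets.

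The paper avoids this pitfall entirely. It does not argue by cases on $e_1^*(y)$; instead it designs the unit ball as $\cconv\{\pm(e_1+2x):x\in A\cap B_X\}$ and shows directly that \emph{every} finitely supported $z$ in this ball can be written as $\lambda(e_1+2x)-(1-\lambda)(e_1+2y)$ with $x,y$ finitely supported positive vectors of norm at most $1$. One then perturbs $x$ along a fresh basis direction to push it onto $S_X$, obtaining $w_n=z+2\lambda a_n e_n\to z$ weakly, and an explicit norm formula (derived from $1$-unconditionality) yields $\nnorm{e_1-w_n}=2$ exactly. This produces, for each point of the ball, a weakly convergent sequence at distance $2$---which is precisely the characterization of a super Daugavet point. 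The dual argument is symmetric. The essential ingredient your sketch is missing is this pointwise approximation scheme; the $\nabla$-/super $\Delta$-split cannot substitute for it.
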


     We will prove Theorem~\ref{thm:super_renorming} in several steps. So from now on, $(X,\norm{\cdot})$ 
     will be an infinite dimensional Banach space with an unconditional weakly null Schauder basis $(e_n)$ and biorthogonal functionals $(e_n^*)$. We will also assume as we may that $(e_n)$ is normalized and 1-unconditional with respect to the original norm of $X$. For simplicity, we will keep using the notation from previous sections whenever we refer to this specific norm. 
     
     Let $Y:=\cspan\{e_n\}_{n\geq 2}$, and let $A$ be the set of all finitely supported elements in the positive cone of $Y$. Similarly,  let $F$ be the set of all finitely supported elements in the positive cone of $\cspan\{e_n^*\}_{n\geq 2}$. 
     
     We consider the equivalent norm $\nnorm{\cdot}$ on $X$ whose unit ball is 
     \begin{equation*}
         B_{(X,\nnorm{\cdot})}:=\cconv\{\pm(e_1+2x)\colon x\in A\cap B_X\}.
     \end{equation*} 
     Since $(e_n)$ is 1-unconditional, every finitely supported element $y\in B_Y$ can be written as $y=y_+-y_-$ with $y_+,y_-\in A\cap B_X$, and it immediately follows that $B_Y$ is contained in $B_{(X,\nnorm{\cdot})}$. Hence, \begin{equation*}
         B_{Y\oplus_1\R e_1}=\conv(B_Y\cup\{\pm e_1\})\subset B_{(X,\nnorm{\cdot})} \subset 3B_X,
     \end{equation*} and $\nnorm{\cdot}$ is indeed an equivalent norm on $X$. 

     We start by giving a geometric description of $B_{(E,\nnorm{\cdot})}$, and by producing useful formula for the norm $\nnorm{\cdot}$ and for its dual norm.

     \begin{lemma}\label{lem:properties_of_new_norm} The space $(X,\nnorm{\cdot})$ and the space $(E,\nnorm{\cdot})$ have the following properties:
     \begin{enumerate}
         \item \label{item:finite_supp_dual_ball} For every $z^*\in B_{(E,\nnorm{\cdot})}$ with finite support and $z^*(e_1)\ge0$, there exist $\lambda\in[0,1]$ and $x^*,y^*\in F\cap B_{X^*}$ with disjoint supports such that 
            \begin{equation*}z^*=\lambda(e_1^*-y^*)+(1-\lambda)\frac{1}{2}(x^*-y^*);\end{equation*} 
            
         \item \label{item:dual_ball_description}  The unit ball of the space $(E,\nnorm{\cdot})$ is given by
         \begin{equation*}B_{(E,\nnorm{\cdot})}=\cconv\Big\{\pm(e_1^*-x^*),\frac{1}{2}(x^*-y^*)\colon x^*,y^*\in F\cap B_{X^*}\Big\};\end{equation*}
         
         \item \label{item:norm_formula} For every $x\in X$,
         \begin{equation*}
             \nnorm{x}=\max\left\{\big|e_1^*(x)\big|,\big|e_1^*(x)-\|x_+\|\big|,\big|e_1^*(x)+\|x_-\|\big|,\frac{1}{2}\big(\|x_+\|+\|x_-\|\big)\right\},
         \end{equation*}
         where $x_+$ and $x_-$ are positive and negative parts of $x-e_1^*(x)e_1$ respectively; 
         
         \item \label{item:dual_norm_formula} For every $x^*\in E$,
         \begin{equation*}
             \nnorm{x^*}=\max\left\{\big|x^*(e_1)+2\|x_+^*\|\big|,\big|x^*(e_1)-2\|x_-^*\|\big|\right\},
         \end{equation*}
         where $x_+^*$ and $x_-^*$ are positive and negative parts of $x^*-x^*(e_1)e_1^*$ respectively.
     \end{enumerate}         
     \end{lemma}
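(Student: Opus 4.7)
My plan is to establish the four assertions in the order \ref{item:dual_norm_formula}, \ref{item:norm_formula}, \ref{item:finite_supp_dual_ball}, \ref{item:dual_ball_description}: the dual norm formula is the key computation, the primal formula follows by duality, and the remaining two assertions are algebraic and density consequences.

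For \ref{item:dual_norm_formula}, by definition of $B_{(X,\nnorm{\cdot})}$ as the closed convex hull of $\{\pm(e_1+2x):x\in A\cap B_X\}$, every $z^*\in X^*$ satisfies $\nnorm{z^*}=\sup_{x\in A\cap B_X}\bigl|z^*(e_1)+2w^*(x)\bigr|$ where $w^*:=z^*-z^*(e_1)e_1^*$. The $1$-unconditionality of $(e_n)$ transfers to $(e_n^*)$, and a standard density argument yields $\sup_{x\in A\cap B_X}w^*(x)=\|w^*_+\|$ and $\inf_{x\in A\cap B_X}w^*(x)=-\|w^*_-\|$, with $w^*_\pm$ the positive and negative parts of $w^*$ relative to $(e_n^*)_{n\ge2}$ (attained by choosing $x$ supported on the respective coordinates of $w^*$). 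Splitting the absolute value into its two envelopes yields \ref{item:dual_norm_formula}.

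For \ref{item:norm_formula}, I would first verify that $E$ remains $1$-norming for $(X,\nnorm{\cdot})$: the adjoints $P_N^*z^*$ of the natural basis projections lie in $E$, stay in the dual ball (since $P_N(A\cap B_X)\subset A\cap B_X$ by $1$-unconditionality), and converge pointwise on $X$ to any $z^*\in B_{(X,\nnorm{\cdot})^*}$. Hence $\nnorm{x}=\sup\{z^*(x):z^*\in E,\,\nnorm{z^*}\le1\}$. Decomposing $x=e_1^*(x)e_1+x_+-x_-$, setting $a:=e_1^*(x)$, $b:=\|x_+\|$, $c:=\|x_-\|$, and parameterizing $z^*$ by $(\alpha,\beta,\gamma):=(z^*(e_1),\|z^*_+\|,\|z^*_-\|)$, a support-alignment argument reduces the problem to maximizing $a\alpha+b\beta+c\gamma$ over the polytope
\begin{equation*}
P:=\{(\alpha,\beta,\gamma)\in\R\times\R_+\times\R_+:\alpha+2\beta\le1,\,-\alpha+2\gamma\le1\}.
\end{equation*}
Evaluating at the four vertices $(1,0,0)$, $(-1,0,0)$, $(1,0,1)$, $(-1,1,0)$ of $P$ gives the three candidates $|a|$, $a+c$, $b-a$, and the fourth term $(b+c)/2$ appearing in \ref{item:norm_formula} is always dominated by $\max(b-a,a+c)$ (these two linear functions of $a$ meet at their common minimum $(b+c)/2$ when $a=(b-c)/2$), so its inclusion in the max is harmless.

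Finally, for \ref{item:finite_supp_dual_ball}, given a finitely supported $z^*\in B_{(E,\nnorm{\cdot})}$ with $\alpha:=z^*(e_1)\ge0$, I would write $z^*=\alpha e_1^*+z^*_+-z^*_-$ with $z^*_\pm\in F$ disjointly supported, and set $\lambda:=\alpha$, $x^*:=\frac{2z^*_+}{1-\alpha}$ (or $0$ if $\alpha=1$), $y^*:=\frac{2z^*_-}{1+\alpha}$; a direct computation verifies the claimed identity, and the constraints $\|x^*\|,\|y^*\|\le1$ translate exactly into the two inequalities characterizing $\nnorm{z^*}\le1$ via \ref{item:dual_norm_formula}. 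Then \ref{item:dual_ball_description} follows: ``$\supset$'' is immediate from \ref{item:dual_norm_formula}, and ``$\subset$'' from \ref{item:finite_supp_dual_ball} (together with its symmetric counterpart for $z^*(e_1)\le0$) combined with density of finitely supported functionals in $E$, after passage to the closed convex hull. The main obstacle I expect is the careful justification of the identity $\sup_{x\in A\cap B_X}w^*(x)=\|w^*_+\|$ via $1$-unconditionality and the verification that $E$ stays $1$-norming for $\nnorm{\cdot}$; once these are in place, the remaining content is finite-dimensional linear programming and bookkeeping.
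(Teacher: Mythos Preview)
Your proposal is correct and follows essentially the same route as the paper: establish \ref{item:dual_norm_formula} first directly from the definition of $B_{(X,\nnorm{\cdot})}$, verify that the basis projections $P_N$ remain contractive in the new norm (so that $E$ is $1$-norming), and then derive the remaining items algebraically from \ref{item:dual_norm_formula}. The only cosmetic difference is the ordering---the paper proceeds \ref{item:dual_norm_formula}$\to$\ref{item:finite_supp_dual_ball}$\to$\ref{item:dual_ball_description}$\to$\ref{item:norm_formula} and reads \ref{item:norm_formula} off by evaluating the generators from \ref{item:dual_ball_description} on $x$, whereas you obtain \ref{item:norm_formula} via the equivalent linear program over the tetrahedron $P$; your observation that the term $\tfrac12(\|x_+\|+\|x_-\|)$ is always dominated by $\max\{\|x_+\|-e_1^*(x),\,e_1^*(x)+\|x_-\|\}$ and hence redundant in the stated maximum is correct.
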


\begin{proof}
    For convenience, we will first prove \ref{item:dual_norm_formula}, and then use it for proving \ref{item:finite_supp_dual_ball} and \ref{item:dual_ball_description}. Fix $x^*\in E$ and let $x_+^*$ and $x_-^*$ be the positive and negative parts of $x^*-x^*(e_1)e_1^*$ respectively. Then $x^*=x^*(e_1)e_1^*+x_+^*-x_-^*$. Since
    \begin{equation*}
         B_{(X,\nnorm{\cdot})}=\cconv\{\pm(e_1+2x)\colon x\in A\cap B_X\},
    \end{equation*}
    we get 
    \begin{align*}
        \nnorm{x^*}&=\sup_{x\in A\cap B_X}\big|x^*(e_1+2x)\big|\\
        &=\sup_{x\in A\cap B_X}\big|x^*(e_1)+2x_+^*(x)-2x_-^*(x)\big|\\
        &=\max\left\{\big|x^*(e_1)+2\|x_+^*\|\big|,\big|x^*(e_1)-2\|x_-^*\|\big|\right\}.
    \end{align*}
    The last equality strongly relies on the fact that $(e_n)$ is 1-unconditional with respect to the original norm. Indeed, the latter implies that 
    \begin{equation*}\norm{y^*}=\sup\{y^*(x): x\in A\cap B_X,\ \supp{x}\subset\supp{y^*}\}\end{equation*}
    for every functional $y^*\in F$. So since $x^*_+$ and $x^*_-$ have disjoint supports, each of these functionals admits a norming set in $A\cap B_X$ on which the other functional vanishes.

    Now let us prove \ref{item:finite_supp_dual_ball}. Fix $z^*\in B_{(E,\nnorm{\cdot})}$ with finite support and $z^*(e_1)\ge0$.
    Let $z_+^*$ and $z_-^*$ be the positive and negative parts of $z^*-z^*(e_1)e_1^*$ respectively. If $z^*(e_1)=1$, then from \ref{item:dual_norm_formula} we get
    \begin{equation*}\max\left\{\big|1+2\|z_+^*\|\big|,\big|1-2\|z_-^*\|\big|\right\}=\nnorm{z^*}\le 1,\end{equation*}
    which means $z_+^*=0$ and $\|z_-^*\|\le 1$. Hence $z^*=e_1^*-z_-^*$ and $z_-^*\in  F\cap B_{X^*}$.
    
    So let us assume that $z^*(e_1)\in[0,1)$. Then
    \begin{equation*}z^*=z^*(e_1)e_1^*+z_+^*-z_-^*=z^*(e_1)\left(e_1^*-y^*\right)+\big(1-z^*(e_1)\big)\frac{1}{2}\left(x^*-y^*\right),\end{equation*}
    where $x^*=\frac{2}{1-z^*(e_1)}z_+^*$ and $y^*=\frac{2}{1+z^*(e_1)} z_-^*$.
    Furthermore,
    \begin{equation*}\max\left\{\big|z^*(e_1)+2\|z_+^*\|\big|,\big|z^*(e_1)-2\|z_-^*\|\big|\right\}=\nnorm{z^*}\le 1\end{equation*}
    and thus 
    \begin{equation*}\frac{2\|z_+^*\|}{1-z^*(e_1)}\le 1\quad\text{ and }\quad \frac{2\|z_-^*\|}{1+z^*(e_1)}\le 1 ,\end{equation*}
    meaning $x^*,y^*\in F\cap B_{X^*}$.

    Next we will prove \ref{item:dual_ball_description}. By \ref{item:finite_supp_dual_ball} we have 
    \begin{equation*}B_{(E,\nnorm{\cdot})}\subseteq\cconv\Big\{\pm(e_1^*-x^*),\frac{1}{2}(x^*-y^*)\colon x^*,y^*\in F\cap B_{X^*}\Big\}.\end{equation*}
    From \ref{item:dual_norm_formula} we get
    \begin{equation*}\nnorm{e_1^*-x^*}=\max\left\{\big|1+2\|0\|\big|,\big|1-2\|x^*\|\big|\right\}=1\end{equation*}
    and 
    \begin{equation*}\frac{1}{2}\nnorm{x^*-y^*}=\max\left\{\|x^*\|,\|y^*\|\right\}\le 1\end{equation*}
    for all $x^*,y^*\in F\cap B_{X^*}$, which gives us the other inclusion. 

    Finally, let us prove \ref{item:norm_formula}. For this purpose, we start by proving that the basis $(e_n)$ is also monotone with respect to the new norm. Then we will get e.g. from \cite[Lemma~3.2.3]{AlbiacKalton}  that \begin{equation*}
        \nnorm{x}=\sup_{z^*\in B_{(E,\nnorm{\cdot})}} \big|z^*(x)\big|
    \end{equation*} for every $x\in X$. So let $n\in \N$ and let $P_n$ be the projection on $\spn\{e_1,\dots, e_n\}$. Since $(e_n)$ is 1-unconditional hence monotone with respect to the original norm, we have that $P_n(x)\in A\cap B_X$ for every $x\in A\cap B_X$. Thus $P_n(e_1+2x)=e_1+2P_n(x)\in B_{(X,\nnorm{\cdot})}$, and from this we clearly get $P_n\big(B_{(X,\nnorm{\cdot})}\big)\subseteq B_{(X,\nnorm{\cdot})}$. Therefore $\nnorm{P_n}\le 1$, which is what we wanted. 
    
    Now fix $x\in X$ and let  $x_+$ and $x_-$ be the positive and negative parts of $x-e_1^*(x)e_1$ respectively. By \ref{item:dual_ball_description} we have
    \begin{align*}
        \nnorm{x}&=\sup_{z^*\in B_{(E,\nnorm{\cdot})}}\big|z^*(x)\big|\\
        &= \max\left\{\sup_{x^*\in F\cap B_{X^*}}|(e_1^*-x^*)(x)|,\frac{1}{2}\sup_{x^*,y^*\in F\cap B_{X^*}}|(x^*-y^*)(x)|\right\}\\
        &= \max\left\{\sup_{x^*\in F\cap B_{X^*}}\big|e_1^*(x)-x^*(x_+)+x^*(x_-)\big|,\frac{1}{2}\sup_{x^*,y^*\in F\cap B_{X^*}}\big(x^*(x_+)+y^*(x_-)\big)\right\}\\
        &= \max\left\{\big|e_1^*(x)\big|,\big|e_1^*(x)-\|x_+\|\big|,\big|e_1^*(x)+\|x_-\|\big|,\frac{1}{2}\big(\|x_+\|+\|x_-\|\big)\right\}.
    \end{align*}

    Again, the last two equalities strongly rely on the fact that $(e_n)$ is 1-unconditional with respect to the original norm because the latter implies that every $y\in A$ attains its norm on the set \begin{equation*}\{x^*\in F\cap B_{X^*}:\ \supp{x^*}\subset\supp{y}\},  \end{equation*} and in particular that $x_+$ and $x_-$ both admit a norming functional in $F\cap B_{X^*}$ which vanishes at the other point.
\end{proof}

In particular, note that $\nnorm{x}=1$ for every $x\in A\cap S_X$, and that $\nnorm{e_1+2y}=1$ for every $y\in A\cap B_X$.  Furthermore, $S_{(X,\nnorm{\cdot})}$ contains plenty of line segments that will prove very useful later on.

\begin{corollary}\label{cor:renorm_segments_in_sphere}
    For every $x\in A\cap S_X$ and $y\in A\cap B_X$ with disjoint supports, and for every $\lambda\in[0,1]$, we have \begin{equation*}
        \nnorm{\lambda x-(1-\lambda)(e_1+2y)}=1.
    \end{equation*}
\end{corollary}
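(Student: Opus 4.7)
The plan is to apply the norm formula from Lemma~\ref{lem:properties_of_new_norm}\ref{item:norm_formula} directly to the vector $z := \lambda x - (1-\lambda)(e_1+2y)$. Since $x,y\in A\subset Y=\cspan\{e_n\}_{n\ge 2}$, the coefficient of $e_1$ in $z$ is $e_1^*(z)=-(1-\lambda)$, so that
\begin{equation*}
z-e_1^*(z)e_1=\lambda x-2(1-\lambda)y.
\end{equation*}
The key structural observation, and the reason the disjoint-support hypothesis is needed, is that $x$ and $y$ lie in the positive cone of $Y$ and have disjoint supports. Hence the positive and negative parts of $z-e_1^*(z)e_1$ decouple along the basis as $z_+=\lambda x$ and $z_-=2(1-\lambda)y$, with $\|z_+\|=\lambda$ (because $\|x\|=1$) and $\|z_-\|=2(1-\lambda)\|y\|$.

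Setting $a:=\|y\|\in[0,1]$, the four quantities inside the maximum in Lemma~\ref{lem:properties_of_new_norm}\ref{item:norm_formula} become
\begin{equation*}
1-\lambda,\quad \bigl|-(1-\lambda)-\lambda\bigr|=1,\quad (1-\lambda)|2a-1|,\quad \tfrac{\lambda}{2}+(1-\lambda)a.
\end{equation*}
All four are at most $1$: the first and the third because $a\in[0,1]$ gives $|2a-1|\le 1$, and the fourth because $\tfrac{\lambda}{2}+(1-\lambda)a\le\tfrac{\lambda}{2}+(1-\lambda)=1-\tfrac{\lambda}{2}\le 1$. Meanwhile, the second term attains the value $1$ exactly. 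Therefore the maximum equals $1$, which by Lemma~\ref{lem:properties_of_new_norm}\ref{item:norm_formula} gives $\nnorm{z}=1$.

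There is essentially no obstacle; the only subtle point is invoking the disjoint-support hypothesis and the fact that $x,y\in A$ to identify $z_+$ and $z_-$ correctly with respect to the 1-unconditional basis $(e_n)$. The computation then reduces to a one-line maximum check.
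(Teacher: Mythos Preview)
Your proof is correct and follows exactly the same route as the paper: apply the norm formula from Lemma~\ref{lem:properties_of_new_norm}\ref{item:norm_formula}, identify $z_+=\lambda x$ and $z_-=2(1-\lambda)y$ via the disjoint-support hypothesis, and check that the maximum of the four terms equals $1$ (attained by the second term). If anything, your version is slightly more careful than the paper's, since you track $a=\|y\|\in[0,1]$ explicitly rather than bounding the third and fourth terms directly.
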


\begin{proof}
    Fix $x\in A\cap S_{X}$ and $y\in A\cap B_{X}$ with disjoint supports, and fix $\lambda\in[0,1]$. Then by Lemma \ref{lem:properties_of_new_norm}~\ref{item:norm_formula} we have 
    \begin{equation*}
        \nnorm{\lambda x-(1-\lambda)(e_1+2y)}=\max\left\{1-\lambda,1,1-\lambda,\frac{1}{2}\big(\lambda+2(1-\lambda)\big)\right\}=1.
    \end{equation*}
\end{proof}

    Analogously, observe that $\nnorm{x^*}=2$ for every $x^*\in F\cap S_{X^*}$ and that $\nnorm{e_1^*-x^*}=1$ for every $x^*\in F\cap B_{X^*}$. From these observations, one can already easily deduce some diametral properties for the points $e_1$ and $e_1^*$. So for the reader's convenience, let us prove right away the two following corollaries. 

     \begin{corollary}\label{cor:e1_super-Delta}

         The point $e_1$ is a super $\Delta$-point in  $(X,\nnorm{\cdot})$, and $e_1^*$ is a weak$^*$ super $\Delta$-point in $(E,\nnorm{\cdot})$.
         
     \end{corollary}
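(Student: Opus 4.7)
The plan is to exhibit, for each case, an explicit sequence that converges weakly (resp. weak$^*$) to the target point while staying at distance $2$ from it in the new norm. The natural candidates have already been flagged in the remarks preceding the statement: for $e_1$ we take $z_n := e_1 + 2e_n$ for $n \geq 2$, and for $e_1^*$ we take $z_n^* := e_1^* - e_n^*$ for $n \geq 2$.

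For the primal statement, note first that since $\norm{\cdot}$ and $\nnorm{\cdot}$ are equivalent, they induce the same weak topology on $X$, so $(e_n)$ remains weakly null in $(X,\nnorm{\cdot})$. Hence $z_n \to e_1$ weakly. Each $z_n$ belongs to $B_{(X,\nnorm{\cdot})}$ directly by the definition of this ball, as $e_n \in A \cap B_X$. It then remains to observe, via Lemma~\ref{lem:properties_of_new_norm}~\ref{item:norm_formula} applied to $e_n$, that $\nnorm{e_n} = 1$, which immediately yields $\nnorm{e_1 - z_n} = 2\nnorm{e_n} = 2$. Any weak neighborhood $V$ of $e_1$ in $B_{(X,\nnorm{\cdot})}$ therefore contains $z_n$ for all sufficiently large $n$, so $e_1$ is a super $\Delta$-point.

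For the dual statement, Lemma~\ref{lem:properties_of_new_norm}~\ref{item:dual_norm_formula} applied to $z_n^*$ and to $e_n^*$ gives $\nnorm{z_n^*} = 1$ and $\nnorm{e_n^*} = 2$, so $z_n^* \in B_{(E,\nnorm{\cdot})}$ and $\nnorm{e_1^* - z_n^*} = 2$. For the weak$^*$ convergence $z_n^* \to e_1^*$ in $E$ (with predual $X$, the biorthogonal functionals separating points of $X$), one only has to check that $e_n^*(x) \to 0$ for every $x \in X$, which is precisely the statement that the coordinates of any vector in its Schauder expansion tend to zero. Combined, these facts show $e_1^*$ is a weak$^*$ super $\Delta$-point.

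No real obstacle is expected: the corollary is essentially a direct unwinding of the explicit norm formulas already proved in Lemma~\ref{lem:properties_of_new_norm}, combined with the weakly null assumption on $(e_n)$. The most delicate point, and the reason this is only a $\Delta$-type statement, is that the chosen sequences $z_n$ and $z_n^*$ happen to converge to $e_1$ and $e_1^*$; upgrading to \emph{super Daugavet} in the subsequent parts of the theorem will require producing, in every (possibly $e_1$-free) weakly open subset of $B_{(X,\nnorm{\cdot})}$, a point far from $e_1$, which is strictly more than what this corollary asks for.
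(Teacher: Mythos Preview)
Your proof is correct and follows essentially the same route as the paper: both use the sequences $e_1+2e_n$ and $e_1^*-e_n^*$, verify the relevant norm computations via Lemma~\ref{lem:properties_of_new_norm} (the paper routes through Corollary~\ref{cor:renorm_segments_in_sphere}, which is itself an immediate consequence of that lemma), and invoke the weakly null / weak$^*$ null behavior of the basis and its biorthogonal functionals. Your added remarks on equivalence of weak topologies and on why $e_n^*(x)\to 0$ are harmless elaborations of points the paper leaves implicit.
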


     \begin{corollary}\label{cor:e1_nabla}

         The points $e_1$ and $e_1^*$ are $\nabla$-points in $(X,\nnorm{\cdot})$ and $(E,\nnorm{\cdot})$ respectively. 
         
     \end{corollary}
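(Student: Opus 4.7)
The plan is to handle both parts in parallel, combining the explicit unit-ball descriptions of Lemma~\ref{lem:properties_of_new_norm} with the norm formulas, and relying on a direct selection of a good candidate point followed by a rescaling step whenever the natural candidate does not yet realize distance~$2$.

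For (1), given a slice $S(f,\alpha)$ with $f(e_1) \leq 1-\alpha$, the decomposition $B_{(X,\nnorm{\cdot})}=\cconv\{\pm(e_1+2x) : x \in A \cap B_X\}$ provides some $\epsilon \in \{\pm 1\}$ and $x \in A \cap B_X$ with $f(\epsilon(e_1+2x)) > 1-\alpha/2$. When $\epsilon = -1$, Lemma~\ref{lem:properties_of_new_norm}\ref{item:norm_formula} directly gives $\nnorm{2e_1+2x} = 2$, so we are done. When $\epsilon = +1$, the constraint $f(e_1) \leq 1-\alpha$ forces $f(x) > \alpha/4 > 0$, so one may rescale to $\tilde x := x/\|x\| \in A \cap S_X$; the inequality $f(\tilde x) \geq f(x)$ shows $e_1+2\tilde x$ still lies in the slice, while $\nnorm{2\tilde x} = 2$ yields the distance.

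For (2), let $\phi \in S_{(E,\nnorm{\cdot})^*}$ with $\phi(e_1^*) \leq 1-\alpha$. Since finitely supported elements are dense in $E$, I select a finitely supported $z^* \in B_{(E,\nnorm{\cdot})}$ with $\phi(z^*) > 1-\alpha/4$ and apply Lemma~\ref{lem:properties_of_new_norm}\ref{item:finite_supp_dual_ball} to $z^*$ (or to $-z^*$ if $z^*(e_1)<0$) combined with pigeonhole to extract a summand $w^*$ of one of the three types $-(e_1^*-y^*)$, $(e_1^*-y^*)$, or $\tfrac{1}{2}(x^*-y^*)$, with $x^*,y^* \in F \cap B_{X^*}$ of disjoint supports, and with $\phi(w^*) > 1-\alpha/4$. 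The first type is already at distance~$2$ from $e_1^*$ (since $\nnorm{2e_1^*-y^*}=2$). The second is handled just as in the primal case: $\phi(e_1^*) \leq 1-\alpha$ forces $\phi(y^*) < 0$, so $y^* \neq 0$, and rescaling to $\tilde y^* := y^*/\|y^*\|$ produces a point in the slice at distance $\nnorm{\tilde y^*} = 2$.

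The main obstacle lies in the third type $w^* = \tfrac{1}{2}(x^*-y^*)$, where rescaling $y^*$ only preserves membership in the slice if $\phi(y^*) \leq 0$, and requires $y^* \neq 0$ to be well-defined. The critical observation ruling out the opposite is that $\nnorm{-e_1^*+x^*} = 1$ for every $x^* \in F \cap B_{X^*}$, which is immediate from Lemma~\ref{lem:properties_of_new_norm}\ref{item:dual_norm_formula}. Indeed, if $\phi(y^*) \geq 0$, then $\phi(x^*)-\phi(y^*) > 2-\alpha/2$ together with $\phi(e_1^*) \leq 1-\alpha$ yields $\phi(-e_1^*+x^*) > 1+\alpha/2$, contradicting $\nnorm{\phi}=1$; this simultaneously excludes $\phi(y^*)>0$ and $y^*=0$. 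Once $\phi(y^*)<0$ is secured, rescaling $y^*$ to norm one and using the disjoint-supports formula give $\nnorm{e_1^*-\tfrac{1}{2}(x^*-\tilde y^*)}=2$. Note that the weakly null hypothesis on $(e_n)$ does not enter this $\nabla$-point conclusion; it only becomes essential for the stronger super Daugavet claims in Theorem~\ref{thm:super_renorming}.
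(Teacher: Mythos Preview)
Your argument is correct. Both you and the paper use the same underlying idea: the unit ball of $(X,\nnorm{\cdot})$ (respectively $(E,\nnorm{\cdot})$) is the closed convex hull of an explicit generating set, and $e_1$ (respectively $e_1^*$) is at distance $2$ from every generator other than itself. The difference is in implementation. The paper first passes to a \emph{refined} generating set consisting only of sphere-type elements --- namely $\{\pm e_1\}\cup\{\pm(e_1+2x):x\in A\cap S_X\}$ in the primal case, and $\{\pm e_1^*\}\cup\{\pm(e_1^*-x^*):x^*\in F\cap S_{X^*}\}\cup\{\tfrac12(x^*-y^*):x^*,y^*\in F\cap S_{X^*}\text{ disjoint}\}$ in the dual case --- and then the distance-$2$ computations are immediate from Corollary~\ref{cor:renorm_segments_in_sphere} and Lemma~\ref{lem:properties_of_new_norm}\ref{item:dual_norm_formula}, with no further case analysis needed. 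You instead stick with the coarser ball-type generating set and perform a local rescaling to push the selected generator to the sphere, using the slice constraint $f(e_1)\le1-\alpha$ (respectively $\phi(e_1^*)\le1-\alpha$) to check that the rescaled point remains in the slice; your extra observation $\nnorm{-e_1^*+x^*}=1$ for $x^*\in F\cap B_{X^*}$ is exactly what is needed to rule out the degenerate subcase in the third dual type, and it is not required in the paper's route because there $y^*$ is already on the sphere. Both approaches work; the paper's is shorter, while yours avoids having to justify the refined generating set separately.
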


     \begin{proof}[Proof of Corollary~\ref{cor:e1_super-Delta}]

      Since $(e_n)$ is weakly null, we have $e_1+2e_n\to e_1$ weakly. By Corollary~\ref{cor:renorm_segments_in_sphere}, we have \begin{equation*}
     \nnorm{e_1}=\nnorm{e_1+2e_n}=1\ \text{and}\ \nnorm{e_1-(e_1+2e_n)}=2\nnorm{e_n}=2
     \end{equation*}for every $n\geq 2$. Hence $e_1$ is a super $\Delta$-point in $(X,\nnorm{\cdot})$. 

     Analogously, $(e_n^*)$ is weak$^*$ null, so we have $e_1^*-e_n^*\to e_1^*$ weak$^*$. Then as $\nnorm{e_1^*-(e_1^*-e_n^*)}=\nnorm{e_n^*}=2$ for every $n\geq2$, it follows that $e_1^*$ is a weak$^*$ super $\Delta$-point in $(E,\nnorm{\cdot})$. 
     
      \end{proof}

      \begin{proof}[Proof of Corollary~\ref{cor:e1_nabla}]

      Clearly, $B_{(X,\nnorm{\cdot})}$ is equal to the closure of the convex hull of the set \begin{equation*}
          \{\pm e_1\}\cup\{\pm (e_1+2x)\colon x\in A\cap S_X\}.
      \end{equation*} So every slice of $B_{(X,\nnorm{\cdot})}$ contains either $\pm e_1$, or $\pm (e_1+2x)$ for some $x\in A\cap S_X$. By Corollary~\ref{cor:renorm_segments_in_sphere}, we have
      \begin{equation*}
          \nnorm{e_1-(e_1+2x)}=2\nnorm{x}=2\ \text{and}\ \nnorm{e_1+(e_1+2x)}=2\nnorm{e_1+x}=2
      \end{equation*} for every $x\in A\cap S_X$, and it clearly follows that $e_1$ is a $\nabla$-point in $(X,\nnorm{\cdot})$.

      It is also quite clear from the proof of Lemma~\ref{lem:properties_of_new_norm} that $B_{(E,\nnorm{\cdot})}$ is equal to the closure of the convex hull of the set \begin{equation*}
          \{\pm e_1^*\}\cup\{\pm (e_1^*-x^*)\colon x^*\in S_{X^*}\}\cup \left\{\frac{1}{2}(x^*-y^*):\  x^*,y^*\in F\cap S_{X^*},\ \supp{x^*}\cap\supp{y^*}=\emptyset\right\}.\end{equation*}
    Using Lemma~\ref{lem:properties_of_new_norm}~\ref{item:dual_norm_formula}, one can easily check that \begin{equation*}
        \nnorm{e_1^*\pm(e_1^*-x^*)}=\nnorm{e_1^*-\frac{1}{2}(x^*-y^*)}=2
    \end{equation*} for every $x^*,y^*\in F\cap S_{X^*}$ with disjoint supports, so it follows as above that $e_1^*$ is a $\nabla$-point in $(E,\nnorm{\cdot})$.

      \end{proof}

      In particular, it readily follows from Corollaries~\ref{cor:e1_super-Delta} and \ref{cor:e1_nabla} that $e_1$ is a Daugavet point in $(X,\nnorm{\cdot})$ and that $e_1^*$ is a weak$^*$ Daugavet point in $(E,\nnorm{.})$. We will now finally show that those points are a super Daugavet point and a weak$^*$ super Daugavet point in their respective unit balls.

\begin{proof}[Proof of Theorem~\ref{thm:super_renorming}]
    Let us first prove \ref{item:e1_super-Daugavet}. Fix $z\in B_{(X,\nnorm{\cdot})}$ with finite support. We will construct a sequence $(w_n)$ in $B_{(X,\nnorm{\cdot})}$ such that $\|e_1- w_n\|\rightarrow2$ and $(w_n)$ converges weakly to $z$. 
    
 Let  $z_+$ and $z_-$ be the positive and negative parts of $z-e_1^*(z)e_1$ respectively, and let $\lambda:=\frac{1+e_1^*(z)}{2}\in [0,1]$. Then define \begin{equation*}
     x:=\left\{\begin{array}{cc}
        \frac{1}{2\lambda}z_+  & \text{if $\lambda>0$}, \\
        0  & \text{if $\lambda=0$},
     \end{array}\right.\quad \text{and}\quad y:=\left\{\begin{array}{cc}
        y=\frac{1}{2(1-\lambda)}z_-  & \text{if $\lambda<1$}, \\
        0  & \text{if $\lambda=1$}.
     \end{array}\right.
 \end{equation*}Note that if $\lambda=0$, then $e_1^*(z)=-1$, and it follows from Lemma \ref{lem:properties_of_new_norm}~\ref{item:norm_formula} that $z_+=0$. Analogously, if $\lambda=1$, then $z_-=0$.    So in either case, we have
    \begin{equation*}z=\lambda(e_1+2x)-(1-\lambda)(e_1+2y).\end{equation*}
    By Lemma \ref{lem:properties_of_new_norm}~\ref{item:norm_formula} 
    we have 
    $\|z_+\|-e_1^*(z)\le \nnorm{z}\le 1$ and thus
    \begin{equation*}\|x\|=\frac{1}{2\lambda}\|z_+\|\le \frac{1+e_1^*(z)}{2\lambda}=1,\end{equation*}
    if $\lambda\neq0$.
    So for every $n\in \N$ large enough, we have $\|x\|\le1$ and $\|x+e_{n}\|\ge \norm{e_n}= 1$. Thus there exists $a_n\in [0,1]$ such that $x+a_ne_{n}\in S_{X}$. Let $w_n=z+2\lambda a_ne_{n}$. By construction, $w_n\in B_{(X,\nnorm{\cdot})}$, and since $(e_n)$ is weakly null, $w_n\to z$ weakly. Furthermore, $x+a_ne_{n}\in A\cap S_{X}$,  and $\supp(x+a_ne_{n})\cap \supp{y}=\emptyset$ for $n\in\N$ large enough, so by Corollary~\ref{cor:renorm_segments_in_sphere}
    we get
    \begin{equation*}
     \nnorm{e_1-w_n}=2\nnorm{(1-\lambda)(e_1+y)-\lambda (x+a_ne_{n})}=2
     \end{equation*}as desired. Finally, as the set of all finitely supported elements is dense in $B_{(X,\nnorm{\cdot})}$, then this  immediately implies that for every $w\in B_{(X,\nnorm{\cdot})}$, there exists a net $(w_\alpha)$ in $B_{(X,\nnorm{\cdot})}$ that converges weakly to $w$ and satisfies $\nnorm{e_1-w_\alpha}\rightarrow2$. Hence $e_1$ is a super Daugavet point in $(X,\nnorm{\cdot})$. 

     Next let us prove \ref{item:e1*_weak*-super-Daugavet}.  Fix $z^*\in B_{(E,\nnorm{\cdot})}$ with finite support. We will construct as above a sequence $(w_n^*)$ in $B_{(E,\nnorm{\cdot})}$ such that $\|e_1^*- w_n^*\|\rightarrow2$ and $(w_n^*)$ converges weak$^*$ to $z^*$. Since the set of all finitely supported functionals is dense in $B_{(E,\nnorm{\cdot})}$, this will give us that $e_1^*$ is a weak$^*$ super Daugavet point in $(E,\nnorm{\cdot})$.
     
     Actually, we will show  that $\|e_1^*\pm w_n^*\|\rightarrow2$, which allows us to assume that $z^*(e_1)\ge 0$. 
     Then by Lemma \ref{lem:properties_of_new_norm}~\ref{item:finite_supp_dual_ball}, there exist $\lambda\in[0,1]$ and $x^*,y^*\in F\cap B_{X^*}$ such that 
    \begin{equation*}z^*=\lambda(e_1^*-y^*)+(1-\lambda)\frac{1}{2}(x^*-y^*).\end{equation*}
    Again, $\|x^*\|\le1$ and $\|x^*+e_{2n}^*\|\ge \norm{e_{2n}^*}= 1$ for $n\in\N$ large enough. Thus there exists $a_n\in [0,1]$ such that $x^*+a_ne_{2n}^*\in S_{X^*}$. Similarly, there exists $b_n\in [0,1]$ such that $y^*+b_ne_{2n+1}^*\in S_{X^*}$. Let \begin{equation*}
        w_n^*=z^*+\frac{1-\lambda}{2} a_ne_{2n}^*-\frac{1+\lambda}{2} b_ne_{2n+1}^*.
    \end{equation*} Since $(e_n^*)$ is weak$^*$ null, $w_n^*\to z^*$ weak$^*$. Furthermore, $\supp(x^*+a_ne_{2n}^*)\cap \supp(y^*+b_ne_{2n+1}^*)=\emptyset$ for $n\in\N$ large enough, so by Lemma \ref{lem:properties_of_new_norm}~\ref{item:dual_norm_formula} 
    we get
    \begin{equation*}
     \nnorm{e_1^*-w_n^*}=\nnorm{(1-\lambda)e_1^*-\frac{1-\lambda}{2}(x^*+a_ne_{2n}^*)+\frac{1+\lambda}{2}(y^*+b_ne_{2n+1}^*)}=\max\{2,0\}=2
     \end{equation*}
     and
    \begin{equation*}
     \nnorm{e_1^*+w_n^*}=\nnorm{(1+\lambda)e_1^*+\frac{1-\lambda}{2}(x^*+a_ne_{2n}^*)-\frac{1+\lambda}{2}(y^*+b_ne_{2n+1}^*)}=\max\{2,0\}=2.
     \end{equation*}The conclusion follows.
\end{proof}

\begin{remark}\label{rem:super_renorming}
  {\ }
  
\begin{enumerate}

    \item\label{item:ccw_rem} It is straightforward to check that the point $e_1$ is an extreme point of $B_{(X,\nnorm{\cdot})}$. So it actually follows from \cite[Remark~3.14]{MPRZ} that additionally to being a super Daugavet point, $e_1$ is also a ccw $\Delta$-point (i.e. satisfies a $\Delta$-like condition for convex combinations of relative weakly open subsets of the unit ball, see \cite{MPRZ}).
    Similarly, $e_1^*$ is an extreme point of $B_{(E,\nnorm{\cdot})}$, so additionally to being a weak$^*$ super Daugavet point, it is also a weak$^*$ ccw $\Delta$-point. In strongly regular spaces (respectively weak$^*$ strongly regular duals), this is the best diametral property we can ask for points of the unit ball, as the existence of a (weak$^*$) ccw Daugavet point implies the (weak$^*$) SD2P (see \cite[Proposition~3.12]{MPRZ}).

    \item\label{item:sequential_rem} It is actually possible to prove, up to some technical and longish adjustments in the above proof (that rely once again strongly on unconditionality), that for every $w\in B_{(X,\nnorm{\cdot})}$, there exists a sequence $(w_n)$ in $B_{(X,\nnorm{\cdot})}$ that converges weakly to $z$ and such that $\nnorm{e_1-w_n}\to 2$. Hence $e_1$ is actually a \emph{sequential super Daugavet point} in the lines of \cite[Definition~5.22]{AALMPPV}. Recall that in general, this is strictly stronger than merely being a super Daugavet point, as there exists a Banach space with the Daugavet property and the Schur property \cite{KW_Schur+Daugavet}. Also, the sequence $(w_n)$ could easily be modified (as in the second part of the above proof) so that $\nnorm{e_1\pm w_n}=2$ for every $n\in \N$, so $e_1$ satisfies a sequential symmetric super Daugavet condition similar to the one from Lemma~\ref{lem:symmetric_super-Daugavet point}. It is not clear whether those two sequential properties are equivalent in general. 

\end{enumerate}
     
\end{remark}

It is well known that every shrinking basis is weakly null (a.k.a. \emph{semi-shrinking}) and that every unconditional basis in a space that does not contain a copy of $\ell_1$ is shrinking. So we immediately get the following corollary. On the other hand, let us note that there exists bases which are semi-shrinking but not shrinking (e.g. the Faber--Schauder basis of $C[0,1]$), and actually that a continuum of mutually non-similar such bases can be constructed using tensor products \cite{Holub71}. Furthermore, there exists a weakly null sequence with no shrinking subsequence \cite{Odell80}. In the previous examples, the bases are all conditional, but unconditional semi-shrinking bases which are not shrinking do also exist (see e.g. \cite{PS} or \cite[Examples~2]{DD}). 

 \begin{corollary}\label{cor:shrinking_super_renorming}

     Let $X$ be a infinite dimensional Banach space with an unconditional Schauder basis. If $X$ does not contain a copy of $\ell_1$, and in particular if $X$ is reflexive, then $X$ can be renormed so that $X$ contains a super Daugavet point and  $X^*$ contains a weak$^*$ super Daugavet point.

 \end{corollary}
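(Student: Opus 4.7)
The plan is to reduce Corollary~\ref{cor:shrinking_super_renorming} directly to Theorem~\ref{thm:super_renorming} by verifying that the two hypotheses there -- namely that the unconditional basis is weakly null and that $E=\cspan\{e_n^*\}$ coincides with the whole dual -- are automatically satisfied under the present assumptions. The reflexive case is of course a special case of the non-containment of $\ell_1$, so it suffices to handle the latter.

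First, I would invoke the classical theorem of James for unconditional bases: an unconditional Schauder basis $(e_n)$ of a Banach space $X$ is shrinking if and only if $X$ does not contain an isomorphic copy of $\ell_1$. Thus under our assumption, $(e_n)$ is shrinking. Next, I would recall that any shrinking basis is automatically weakly null: indeed, for every $x^*\in X^*$ one has $x^*(e_n)\to 0$ (this follows at once from the fact that the biorthogonal functionals $(e_n^*)$ span $X^*$ densely when the basis is shrinking, combined with the boundedness of $(e_n)$). Moreover, by the very definition of a shrinking basis, $E:=\cspan\{e_n^*\}=X^*$.

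With these two observations in hand, the hypotheses of Theorem~\ref{thm:super_renorming} are met, and the renorming $\nnorm{\cdot}$ constructed there provides an equivalent norm on $X$ for which $e_1$ is a super Daugavet point in $(X,\nnorm{\cdot})$ and $e_1^*$ is a weak$^*$ super Daugavet point in $(E,\nnorm{\cdot})=(X^*,\nnorm{\cdot})$, which is exactly what is claimed.

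There is no real obstacle here: the entire argument is bookkeeping on the level of known characterizations of unconditional bases. The only point worth pausing on is the identification $E=X^*$, which ensures that the weak$^*$ super Daugavet point produced by Theorem~\ref{thm:super_renorming} lives in the dual of the renormed space and not merely in a proper subspace of it; this is precisely where ``shrinking'' (rather than merely ``weakly null'') is used. The reflexive case then follows immediately since reflexive spaces contain no copy of $\ell_1$.
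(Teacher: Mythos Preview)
Your argument is correct and follows the same route as the paper: use James' theorem to pass from ``unconditional and no copy of $\ell_1$'' to ``shrinking'', note that shrinking implies weakly null, and then apply Theorem~\ref{thm:super_renorming}. Your additional remark that shrinking gives $E=\cspan\{e_n^*\}=X^*$, so that the weak$^*$ super Daugavet point produced by Theorem~\ref{thm:super_renorming} genuinely lives in $X^*$, makes explicit a point the paper leaves implicit.
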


\begin{remark}\label{rem:weakly-null_biorthogonal-functionals}

    If the sequence $(e_n^*)$ of biorthogonal functionals is also assumed to be weakly null in Theorem~\ref{thm:super_renorming}, and in particular if the space $X$ is reflexive, then the functional $e_1^*$ is actually also a (symmetric sequential) super Daugavet point in $(E,\nnorm{\cdot})$. Note that the basis from \cite[Examples~2]{DD} is unconditonal, boundedly complete and semi-shrinking; but as it is not shrinking, the space is not reflexive, so there exits non-trivial examples of this kind. Recall that in general, diametral properties of points in a dual space are way stronger than their weak$^*$ counterparts, as e.g. every point in the unit ball of $C[0,1]^*$ is a weak$^*$ ccw Daugavet point while $B_{C[0,1]^*}$ contains denting points. 

\end{remark}

In particular, let us highlight that combining Corollary~\ref{cor:shrinking_super_renorming} with Remarks~\ref{rem:super_renorming}~\ref{item:ccw_rem} and \ref{rem:weakly-null_biorthogonal-functionals}, we get the following theorem. 

\begin{theorem}\label{thm:l2_renorming}

    Let $(e_n)$ be the unit vector basis of $\ell_2$. There exists a renorming of $\ell_2$ for which $e_1$ is an extreme super Daugavet -- hence ccw $\Delta$ -- point in the new norm and its dual norm. 
    
\end{theorem}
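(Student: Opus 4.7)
The plan is to obtain Theorem~\ref{thm:l2_renorming} as a direct specialization of Theorem~\ref{thm:super_renorming}, combined with the refinements recorded in Remarks~\ref{rem:super_renorming} and \ref{rem:weakly-null_biorthogonal-functionals}. Applied to $X=\ell_2$ with its unit vector basis $(e_n)$, the first task is to check the hypotheses: $(e_n)$ is normalized, $1$-unconditional and, since $\ell_2$ is reflexive with shrinking basis, weakly null. Theorem~\ref{thm:super_renorming} then yields an equivalent norm $\nnorm{\cdot}$ on $\ell_2$ for which $e_1$ is a super Daugavet point in $(\ell_2,\nnorm{\cdot})$ and $e_1^*$ is a weak$^*$ super Daugavet point in $(E,\nnorm{\cdot})$, where $E=\cspan\{e_n^*\}$.

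The next step is to upgrade the dual statement from weak$^*$ to norm super Daugavet. Because $\ell_2$ is reflexive, we have $E = \ell_2^* = (\ell_2,\nnorm{\cdot})^*$, the biorthogonal basis $(e_n^*)$ is itself weakly null in $\ell_2^*$, and the weak and weak$^*$ topologies on $E$ coincide. Hence ``weak$^*$ super Daugavet'' and ``super Daugavet'' mean the same thing in $E$; this is precisely the observation of Remark~\ref{rem:weakly-null_biorthogonal-functionals}.

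It then remains to verify that $e_1$ and $e_1^*$ are extreme in their respective unit balls, after which Remark~\ref{rem:super_renorming}~\ref{item:ccw_rem} supplies the ccw $\Delta$-point conclusion. Writing $e_1 = \tfrac{1}{2}(u+v)$ with $u,v\in B_{(\ell_2,\nnorm{\cdot})}$, the bounds $|e_1^*(u)|,|e_1^*(v)|\le 1$ together with $e_1^*(u)+e_1^*(v)=2$ force $e_1^*(u)=e_1^*(v)=1$. Substituting into the formula of Lemma~\ref{lem:properties_of_new_norm}~\ref{item:norm_formula}, and using the identity $v=2e_1-u$ (which yields $v_+=u_-$ and $v_-=u_+$), the condition $\nnorm{u},\nnorm{v}\le 1$ forces $\|u_-\|=\|v_-\|=0$, hence $u_+=u_-=0$ and therefore $u=v=e_1$. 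A completely parallel calculation using Lemma~\ref{lem:properties_of_new_norm}~\ref{item:dual_norm_formula} shows that $e_1^*$ is extreme in $B_{(E,\nnorm{\cdot})}$.

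There is no genuine obstacle here; the only subtlety is the reflexivity-based identifications $E=(\ell_2,\nnorm{\cdot})^*$ and weak$^*$ = weak on $E$. The proof is essentially a bookkeeping exercise assembling the pieces already established in the previous results.
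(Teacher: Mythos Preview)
Your proposal is correct and follows essentially the same route as the paper, which simply states that the theorem follows by combining Corollary~\ref{cor:shrinking_super_renorming} with Remarks~\ref{rem:super_renorming}~\ref{item:ccw_rem} and~\ref{rem:weakly-null_biorthogonal-functionals}; you go directly to Theorem~\ref{thm:super_renorming} rather than through its corollary, but this is the same argument. Your explicit verification of extremality via Lemma~\ref{lem:properties_of_new_norm} is a welcome addition, since the paper only asserts in Remark~\ref{rem:super_renorming}~\ref{item:ccw_rem} that this check is ``straightforward''.
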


 In \cite[Example~5.20]{AALMPPV}, the renorming of $\ell_2$ with a super $\Delta$-point from \cite[Theorem~3.1]{AALMPPV} was used to provide a non-reflexive M-embedded space with a super $\Delta$-point and a super $\Delta$-point in its dual. By using Theorem~\ref{thm:l2_renorming}, we can produce a similar construction for super Daugavet point.

\begin{corollary}\label{cor:M-embedded_renormings}

There exists a non-reflexive M-embedded space $Y$ such that $Y$ and its dual contain a super Daugavet point.

\end{corollary}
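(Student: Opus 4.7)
The plan is to replay the construction of \cite[Example~5.20]{AALMPPV} with the stronger renorming of $\ell_2$ provided by Theorem~\ref{thm:l2_renorming} as input, in place of the super $\Delta$-renorming originally used there. Concretely, I would set $Y := \bigl(\bigoplus_{n \in \N} X\bigr)_{c_0}$, where $X$ denotes $\ell_2$ equipped with the renorming from Theorem~\ref{thm:l2_renorming}. This space is non-reflexive since it isometrically contains $c_0$, and it is M-embedded as a $c_0$-sum of reflexive (hence trivially M-embedded) summands, by the classical stability theorem for $c_0$-sums of M-ideals.

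For the super Daugavet point in $Y$, the natural candidate is $(e_1, 0, 0, \dots)$. Given a basic relatively weakly open set $V = \{y \in B_Y : |\phi^{(i)}(y - z)| < \delta,\ 1 \leq i \leq m\}$ containing some $z = (z_n) \in B_Y$ and $\varepsilon > 0$, I would consider perturbations of the form $w := (u, z_2, z_3, \dots)$ with $u \in B_X$. Since each $\phi^{(i)}$ belongs to $Y^* = \bigl(\bigoplus_n X^*\bigr)_{\ell_1}$, the constraint $w \in V$ reduces to $|\phi^{(i)}_1(u - z_1)| < \delta$ for $1 \leq i \leq m$, which defines a non-empty relatively weakly open subset $W_1 \subseteq B_X$ containing $z_1$. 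By Theorem~\ref{thm:l2_renorming}, there exists $u \in W_1$ with $\nnorm{e_1 - u} \geq 2 - \varepsilon$, and then
\begin{equation*}
\nnorm{(e_1, 0, \dots) - w} \;=\; \max\bigl(\nnorm{e_1 - u},\, \sup_{n \geq 2}\nnorm{z_n}\bigr) \;\geq\; 2 - \varepsilon.
\end{equation*}

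For the dual, the candidate is $(e_1^*, 0, 0, \dots) \in Y^* = \bigl(\bigoplus_n X^*\bigr)_{\ell_1}$. The strategy would be to perturb a given $\psi \in V \subseteq B_{Y^*}$ along a direction of the form $(-t\,e_1^*)$ placed in a spare coordinate far out, combined with a modification of the first coordinate $\psi_1$ provided by the weak$^*$ super Daugavet property of $e_1^*$ in $X^*$. The main obstacle arises precisely here: the weak topology on $Y^*$ is dictated by $Y^{**} = \bigl(\bigoplus_n X\bigr)_{\ell_\infty}$, which strictly contains the canonical copy of $Y$, so some test functionals have coordinate norms that do not decay and therefore cannot be made negligible merely by pushing the perturbation to a large index. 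Handling this requires invoking the M-ideal decomposition $Y^{***} = Y^\perp \oplus_1 Y^*$ in order to split each test functional in $Y^{**}$ into a ``$Y$-part'' whose tail coordinates vanish and a singular part that is controlled by the $\ell_\infty$-geometry of the decomposition. This step is precisely the one carried out in the dual part of \cite[Example~5.20]{AALMPPV}, and it transfers verbatim to the super Daugavet setting once the weak$^*$ super Daugavet input from Theorem~\ref{thm:l2_renorming} is substituted for the weak$^*$ super $\Delta$ input used there.
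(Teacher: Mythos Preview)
Your construction of $Y=c_0(X)$ with $X$ the renormed $\ell_2$, the non-reflexivity and M-embeddedness verifications, and the direct argument that $(e_1,0,0,\dots)$ is super Daugavet in $Y$ all match the paper's proof (the paper simply cites \cite[Remark~3.28]{MPRZ} for the last step, which your argument essentially unpacks).

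For the dual, however, you overcomplicate matters and your sketch is not clearly sound. Theorem~\ref{thm:l2_renorming} already asserts that $e_1$ is a super Daugavet point \emph{in the dual norm}; since $\ell_2$ is reflexive, this means $e_1^*$ is a genuine super Daugavet point in $X^*$, not merely a weak$^*$ one. The paper therefore just applies the $\ell_1$-sum transfer from \cite[Remark~3.28]{MPRZ} once more to $Y^*=\ell_1(X^*)$ and is done---no M-ideal machinery, no appeal to \cite[Example~5.20]{AALMPPV}. Your proposed route via only the weak$^*$ property and the decomposition $Y^{***}=Y^\perp\oplus_1 Y^*$ is unnecessary, and as written it has a gap: that decomposition lives in $Y^{***}$, so it does not directly split the test functionals from $Y^{**}$ that define the weak topology on $Y^*$; moreover it is not evident that the super~$\Delta$ argument of \cite[Example~5.20]{AALMPPV} upgrades verbatim to the super Daugavet setting, where one must treat \emph{every} relatively weakly open subset of $B_{Y^*}$ rather than only those containing $(e_1^*,0,0,\dots)$.
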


\begin{proof}

   Let $X:=(\ell_2,\nnorm{\cdot})$ be the renorming of $\ell_2$ for which $e_1$ is a super Daugavet point in $X$ and $X^*$. As reflexive spaces are trivially M-embedded, it follows from  \cite[Theorem~III.1.6]{HWW} that the space $Y:=c_0(X)$ is M-embedded.  Then $Y^*\equiv\ell_1(X^*)$, and it follows from \cite[Remark~3.28]{MPRZ} that the point $(e_1,0,0,\dots)$ is a super Daugavet point in $Y$ and $Y^*$. 
   
\end{proof}

It was proved in  \cite[Theorem~4.1]{AALMPPV} that every infinite dimensional Banach spaces can be renormed to have a $\Delta$-point. It is thus natural to ask the following.

\begin{question}

Can every infinite dimensional Banach space be renormed to have a Daugavet point?

\end{question} 

By Theorem~\ref{thm:super_renorming} and by \cite[Theorem~2.1]{AALMPPV}, the answer is yes if $X$ is infinite dimensional and has a weakly null unconditional Schauder basis, or if $X:=\ell_1$. Combining those two results, we will now prove that more generally, the answer is yes for every infinite dimensional Banach space with an unconditional Schauder basis, as well as for any Banach space that contains a complemented copy of such a space.

Note that some of the key ingredients for the $\Delta$-renormings from \cite[Theorem~4.1]{AALMPPV} were a classic norm extension result and the well known fact that $\Delta$-points pass to superspaces. This is no longer true for Daugavet points, e.g. because it was proved in  \cite{AHLP} that $\ell_2$-sums of Banach spaces never contain such points. However, let us point out that we can still get, up to renorming, some similar result whenever the considered point lives in a space that is complemented in the superspace.

\begin{proposition}\label{prop:DP_in_complemented_subspaces}

    Let $X$ be a Banach space. If $X$ contains a complemented subspace $Y$ that can be renormed with a Daugavet point, then $X$ can be renormed with a Daugavet point.  Moreover, if $Y$ can  be renormed so that $Y^*$ contains a (weak$^*$) Daugavet point, then so can $X$. The same does hold for (weak$^*$) super Daugavet points.
    
\end{proposition}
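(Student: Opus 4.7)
The plan is to realize $X$ as an $\ell_1$- or $\ell_\infty$-sum of the renormed $Y$ and a complementary subspace, and then appeal directly to the transfer results for absolute sums from \cite{HPV} together with the two-case scheme already exhibited in Proposition~\ref{prop:nabla_points_l1_sums}. Since $Y$ is complemented in $X$, fix a bounded projection $P:X\to X$ with range $Y$ and let $Z:=\ker P$, so that $X=Y\oplus Z$ as a direct sum of closed subspaces.

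For the first statement, assume $Y$ has been equivalently renormed by some $\nnorm{\cdot}_Y$ so that $(Y,\nnorm{\cdot}_Y)$ contains a (super) Daugavet point $y_0$. Define an equivalent norm on $X$ by
\begin{equation*}
\nnorm{x}:=\nnorm{Px}_Y+\norm{x-Px}_Z, \qquad x\in X,
\end{equation*}
so that $(X,\nnorm{\cdot})$ is linearly isometric to $(Y,\nnorm{\cdot}_Y)\oplus_1(Z,\norm{\cdot}_Z)$. By the transfer results for $\ell_1$-sums in \cite[Section~3]{HPV} (as already recalled in Section~\ref{subsec:absolute_sums}), the point $y_0$, identified with $(y_0,0)$ in the sum, is a (super) Daugavet point of $(X,\nnorm{\cdot})$.

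For the dual statement, we switch to an $\ell_\infty$-sum: if $Y$ is renormed so that $y_0^*\in Y^*$ is a (weak$^*$) (super) Daugavet point, set instead
\begin{equation*}
\nnorm{x}:=\max\bigl(\nnorm{Px}_Y,\norm{x-Px}_Z\bigr),
\end{equation*}
so that $(X,\nnorm{\cdot})$ is isometric to $(Y,\nnorm{\cdot}_Y)\oplus_\infty(Z,\norm{\cdot}_Z)$ and its dual is isometric to $(Y^*,\nnorm{\cdot}_{Y^*})\oplus_1(Z^*,\norm{\cdot}_{Z^*})$. Invoking once more the $\ell_1$-sum transfer results from \cite{HPV} in the dual identifies $(y_0^*,0)$ as a (super) Daugavet point of $X^*$. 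For the weak$^*$ versions, the argument follows the by-now-familiar two-case analysis already used in the proof of Proposition~\ref{prop:nabla_points_l1_sums}: any weak$^*$ slice (resp. weak$^*$ relatively open neighborhood) of $B_{Y^*\oplus_1 Z^*}$ is determined by an element $(y,z)\in Y\oplus_\infty Z$; if the $Y$-coordinate is norming, one pulls back to a weak$^*$ slice of $B_{Y^*}$ and applies the assumed weak$^*$ property of $y_0^*$ to produce a $(y^*,0)$ at distance close to $2$ from $(y_0^*,0)$, while if the $Z$-coordinate is norming, one picks a norming $z^*\in S_{Z^*}$ and observes that $(0,z^*)$ already sits at distance exactly $2$ from $(y_0^*,0)$ in the $\ell_1$-norm.

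The only point requiring any work beyond a direct citation is precisely this verification that the $\ell_1$-sum transfer (and its super analogue) remains valid for weak$^*$ slices and weak$^*$ relatively open sets in the dual of an $\ell_\infty$-sum. However, this does not introduce any new idea: since a weak$^*$ slice of $B_{X^*}$ is in particular a slice, and since every non-empty relatively weak$^*$ open subset of $B_{Y^*\oplus_1 Z^*}$ contains a basic neighborhood of product form through the identification $X=Y\oplus_\infty Z$, the exact same case split as above applies verbatim. Consequently, the only mildly delicate step is to confirm that in Step (1) the transfer results from \cite{HPV} deliver super Daugavet points through $\ell_1$-sums (and not merely Daugavet points); but this also follows from the product structure of weak neighborhoods and the super Daugavet property of $y_0$ in $Y$, exactly as in the sketch above.
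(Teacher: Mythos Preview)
Your approach is essentially the same as the paper's: decompose $X$ as an absolute sum of the renormed $Y$ and a complement, then invoke known transfer results. The paper uses a single $\ell_1$-sum renorming throughout (so the dual becomes an $\ell_\infty$-sum, and the dual statement is obtained via $\ell_\infty$-transfer), whereas you switch to an $\ell_\infty$-sum for the dual case so as to always land in an $\ell_1$-sum on the side where the diametral point lives. Both variants work and rely on the same cited results from \cite{HPV} and \cite{MPRZ}.

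One point of imprecision: your handling of the weak$^*$ \emph{super} Daugavet case is looser than you suggest. The two-case split from Proposition~\ref{prop:nabla_points_l1_sums} is a slice-level argument, and the claim that relatively weak$^*$ open subsets of $B_{Y^*\oplus_1 Z^*}$ have ``product form'' does not by itself finish the job, because the $\ell_1$-ball is not a product of balls; one needs an additional scaling step (approximate an arbitrary $(y_1^*,z_1^*)$ by $(c\,u_\alpha^*,z_1^*)$ with $c=1-\norm{z_1^*}$ and $u_\alpha^*\to y_1^*/c$ weak$^*$, $\norm{y_0^*-u_\alpha^*}\to 2$). This is routine and is exactly what the paper defers to \cite[Remark~3.28]{MPRZ}, so your proof is correct once this is made precise.
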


\begin{proof}

As $Y$ is complemented in $X$, we have that $X$ is isomorphic to $Y\oplus_1 Z$ for some Banach space $Z$. As this space is also isomorphic to $(Y,\nnorm{\cdot})\oplus_1Z$ for every equivalent norm $\nnorm{\cdot}$ on $Y$, we may simply assume that $Y$ contains a Daugavet point. But now if $y\in S_Y$ is a Daugavet point, then it follows from \cite[Proposition~2.3]{HPV} that $x:=(y,0)$ is a Daugavet point in $Y\oplus_1 Z$. For super Daugavet points, the result follows analogously using \cite[Remark~3.28]{MPRZ}. Furthermore, as the dual of this space is isometric to $(Y^*,\nnorm{\cdot})\oplus_\infty Z^*$, then the dual part of Proposition~\ref{prop:DP_in_complemented_subspaces} follows from known transfer results of Daugavet and super Daugavet points through $\ell_\infty$-sums (see the following remark).
    
\end{proof}

\begin{remark}

    If we had initially taken an $\ell_\infty$-sum instead of an $\ell_1$-sum in the proof of the previous result, then observe that we would actually get a renorming of $X$ with infinitely many (super) Daugavet points. Indeed, it follows from \cite[Proposition~2.4]{HPV} that if $y\in S_Y$ is a Daugavet point, then $(y,z)$ is a Daugavet point in $Y\oplus_\infty Z$ for every $z\in B_Z$. For super Daugavet points, this follows again from \cite[Remark~3.28]{MPRZ}.

\end{remark}

As a corollary, we get that every Banach space with an unconditional basis can be renormed with a Daugavet point and a weak$^*$ Daugavet point in its dual. 

\begin{corollary}\label{cor:unconditional_renorming}

    Let $X$ be a Banach space with an unconditional basis $(e_n)$. Then $X$ can be renormed with a Daugavet point. More precisely:

    \begin{enumerate}
    
        \item If $(e_n)$ is shrinking, or if $(e_n)$ is neither shrinking nor boundedly complete, then $X$ can be renormed with a super Daugavet point and a weak$^*$ super Daugavet point in its dual;

        \item If $(e_n)$ is not shrinking, then $X$ can be renormed with a Daugavet point in the space and its dual.

    \end{enumerate}
 
 \end{corollary}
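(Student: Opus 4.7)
The plan is to split the argument using the two classical James dichotomies for unconditional bases: $(e_n)$ is shrinking if and only if $X$ contains no isomorphic copy of $\ell_1$, and $(e_n)$ is boundedly complete if and only if $X$ contains no copy of $c_0$. In either failure case, the unconditionality of $(e_n)$ automatically makes the extracted $\ell_1$- or $c_0$-basic subsequence span a \emph{complemented} subspace of $X$.

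For assertion (1) when $(e_n)$ is shrinking, Corollary~\ref{cor:shrinking_super_renorming} applies directly and provides the required renorming. When $(e_n)$ is neither shrinking nor boundedly complete, James's theorem produces a complemented subspace $Y\subseteq X$ isomorphic to $c_0$; since the unit vector basis of $c_0$ is shrinking, Corollary~\ref{cor:shrinking_super_renorming} applied to $Y$ furnishes an equivalent norm on $Y$ with a super Daugavet point in $Y$ and a weak$^*$ super Daugavet point in $Y^*$. Applying the super and weak$^*$-super versions of Proposition~\ref{prop:DP_in_complemented_subspaces} then lifts both features to $X$ and $X^*$, completing assertion (1).

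For assertion (2), if $(e_n)$ is not shrinking, James's theorem supplies a complemented copy $Y\subseteq X$ of $\ell_1$. I would invoke \cite[Theorem~2.1]{AALMPPV} to renorm $Y\cong\ell_1$ with a Daugavet point, and apply the Daugavet version of Proposition~\ref{prop:DP_in_complemented_subspaces} to lift this to a Daugavet point in $X$. For the dual, one uses the identification $X^*\cong Y^*\oplus_\infty Z^*$ together with \cite[Proposition~2.4]{HPV}, which transfers a Daugavet point of $Y^*$ to one of $X^*$.

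The main obstacle is precisely this last step: one must verify that the $\ell_1$-renorming of \cite[Theorem~2.1]{AALMPPV} simultaneously produces a Daugavet point in $\ell_1$ and one in the induced dual norm on $\ell_1^*$. Since the renormed $\ell_1$ there is isometric to the Lipschitz-free space over a specific metric space $M$, and in particular is itself a dual space, this reduces to extracting a Daugavet point from the dual $\mathcal{F}(M)^*$; once that verification is carried out, the remainder of the proof is a straightforward assembly of the cited ingredients.
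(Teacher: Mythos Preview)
Your proposal is correct and follows essentially the same route as the paper: the shrinking case via Corollary~\ref{cor:shrinking_super_renorming}, the non-boundedly-complete case via a complemented copy of $c_0$ and Proposition~\ref{prop:DP_in_complemented_subspaces}, and the non-shrinking case via a complemented copy of $\ell_1$ together with \cite[Theorem~2.1]{AALMPPV}. You have also correctly isolated the one genuine gap, namely the need for a Daugavet point in the dual of the renormed $\ell_1$; the paper closes exactly this gap by invoking \cite[Proposition~5.1]{VeeorgFunc}, which produces the required Daugavet point in $\mathrm{Lip}_0(M)=\mathcal{F}(M)^*$ for the metric space $M$ underlying the construction of \cite[Theorem~2.1]{AALMPPV}.
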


 \begin{proof}

     If $(e_n)$ is shrinking, then this is Corollary~\ref{cor:shrinking_super_renorming}. The remaining cases are direct consequences of Proposition~\ref{prop:DP_in_complemented_subspaces} together with classic results from James about unconditional bases: If $(e_n)$ is not boundedly complete, then $X$
     contains a complemented copy of $c_0$, and if $(e_n)$ is not shrinking, then $X$  contains a complemented copy of $\ell_1$ (see e.g. \cite[Theorems~3.3.2 and 3.3.1]{AlbiacKalton}). The fact that $\ell_1$ can be renormed with a Daugavet point in the space and its dual follows by combining \cite[Theorem~2.1]{AALMPPV} and \cite[Proposition~5.1]{VeeorgFunc}.

     \end{proof}

Using once again Proposition~\ref{prop:DP_in_complemented_subspaces}, we ultimately get the result that was stated in the introduction (Theorem~\ref{intro-thm:unconditional_renorming}). 

Finally, observe that calling to some other classic results from the literature, we also have the following statements.

\begin{corollary}\label{cor:complemented_renormings} Let $X$ be a Banach space. 
    
\begin{enumerate}

     \item\label{item:separabel_c0} If $X$ is separable and contains a copy of $c_0$, then $X$ can be renormed with a super Daugavet point and a weak$^*$ super Daugavet point in its dual.

     \item\label{item:dual_c0} If $X^*$ contains a copy of $c_0$, then $X$ can be renormed with a Daugavet point in the space and its dual. 

    \item\label{item:l_infty} If $X$ contains a copy of $\ell_\infty$, then $X$ can be renormed with a super Daugavet point in the space and its dual.  
    
\end{enumerate}

\end{corollary}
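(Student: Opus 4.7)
The plan in each of the three items is to identify a familiar complemented subspace of $X$ and feed it to Proposition~\ref{prop:DP_in_complemented_subspaces}, combined with a renorming already established in the paper or in the cited literature.

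For \ref{item:separabel_c0}, I would invoke Sobczyk's theorem, which asserts that any isomorphic copy of $c_0$ inside a separable Banach space is automatically complemented. Since the unit vector basis of $c_0$ is unconditional and shrinking, Corollary~\ref{cor:shrinking_super_renorming} produces a renorming of $c_0$ with a super Daugavet point in $c_0$ and a weak$^*$ super Daugavet point in $\ell_1 = c_0^*$. Proposition~\ref{prop:DP_in_complemented_subspaces} then transfers this renorming to $X$.

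For \ref{item:dual_c0}, I would rely on the classical Bessaga--Pelczynski theorem: $X^*$ contains a copy of $c_0$ if and only if $X$ contains a complemented copy of $\ell_1$. As already noted in the proof of Corollary~\ref{cor:unconditional_renorming}, combining \cite[Theorem~2.1]{AALMPPV} with \cite[Proposition~5.1]{VeeorgFunc} provides a renorming of $\ell_1$ with a Daugavet point both in the space and in its dual. Applying Proposition~\ref{prop:DP_in_complemented_subspaces} completes this part.

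For \ref{item:l_infty}, the injectivity of $\ell_\infty$ yields that any isomorphic copy of $\ell_\infty$ in $X$ is automatically complemented, so the task reduces to producing a single renorming of $\ell_\infty$ carrying super Daugavet points both in the space and in its dual. The plan is to use Pelczynski's isomorphism $\ell_\infty \cong L_\infty[0,1]$ in order to renorm $\ell_\infty$ as $L_\infty[0,1]$. The latter has the Daugavet property (being a $C(K)$-space for a perfect compact Hausdorff $K$), hence every point of its unit sphere is a super Daugavet point. Moreover, the Yosida--Hewitt decomposition expresses $L_\infty[0,1]^*$ as an isometric $\ell_1$-sum of $L_1[0,1]$ and the space of purely finitely additive measures; since $L_1[0,1]$ has the Daugavet property, an application of \cite[Remark~3.28]{MPRZ} produces super Daugavet points in $L_\infty[0,1]^*$. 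One final use of Proposition~\ref{prop:DP_in_complemented_subspaces} concludes the argument.

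The hard part is \ref{item:l_infty}: items \ref{item:separabel_c0} and \ref{item:dual_c0} essentially amount to coupling well-known classical complementation theorems with the renormings of $c_0$ and $\ell_1$ already set up in the paper, whereas \ref{item:l_infty} requires the nontrivial identification of $\ell_\infty$ with $L_\infty[0,1]$ and the careful use of the $\ell_1$-decomposition of $L_\infty[0,1]^*$ to produce super Daugavet points in the dual.
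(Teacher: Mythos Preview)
Your proposal is correct, and for items \ref{item:separabel_c0} and \ref{item:dual_c0} it matches the paper's proof essentially verbatim.

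For item \ref{item:l_infty} you take a genuinely different route. The paper does \emph{not} renorm $\ell_\infty$: it works with the original norm, noting that $\ell_\infty = C(\beta\N)$ already admits super Daugavet points (functions attaining their norm at an accumulation point of $\beta\N$) via \cite[Corollary~5.4]{AHLP} and \cite[Corollary~4.3]{MPRZ}. For the dual, the paper observes that $\ell_\infty^*$ contains an isometric copy of $L_1[0,1]$ (because $\ell_\infty$ contains an isometric copy of $\ell_1$, then \cite{DGH} applies); since $\Delta$-points pass to superspaces and $\ell_\infty^*$ is itself isometrically an $L_1(\mu)$-space, these $\Delta$-points are automatically super Daugavet by \cite[Corollary~4.1]{MPRZ}. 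Your approach instead invokes Pe{\l}czy{\'n}ski's isomorphism $\ell_\infty \cong L_\infty[0,1]$ and the Yosida--Hewitt $\ell_1$-decomposition of $L_\infty[0,1]^*$. Both arguments are valid; the paper's avoids the non-trivial Pe{\l}czy{\'n}ski isomorphism and stays within the original norm, while yours yields the stronger bonus that the renormed $\ell_\infty$ enjoys the full Daugavet property (not just one super Daugavet point) and handles the dual via a more explicit structural decomposition rather than an embedding argument.
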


\begin{proof}

    It was proved by Sobczyk in \cite{Sobczyk41} that copies of $c_0$ in separable Banach spaces are always complemented. In the same paper, it was also observed that a previous extension result from Phillips can be used to show that the same is true for copies of $\ell_\infty$ in arbitrary Banach spaces. With modern terminology, this is a consequence of the fact that the space $c_0$ is separably injective, and that the space $\ell_\infty$ is isometrically injective. We refer to \cite[Section~2.5]{AlbiacKalton} for more details.
    It is also a well known result, due to Bessaga and Pe{\l}czy{\'n}ski, that if the dual $X^*$ of a Banach space $X$ contains a copy of $c_0$, then the space $X$ contains a complemented copy of $\ell_1$ (see e.g. \cite[Theorem~4.4]{Czechbook}).

    With those results at hand, Corollary~\ref{cor:complemented_renormings} immediately follows from Proposition~\ref{prop:DP_in_complemented_subspaces} together with Corollary~\ref{cor:unconditional_renorming} and the well known fact that $\ell_\infty$, as a $C(K)$-space, admits super Daugavet points (combining e.g. \cite[Corollary~5.4]{AHLP} and \cite[Corollary~4.3]{MPRZ}). That $\ell_\infty^*$ also admits super Daugavet points can be obtained as follows. First, it is well known that $\ell_\infty^*$ contains an isometric copy of the space $L_1[0,1]$. Indeed, $\ell_\infty$ contains an isometric copy of $\ell_1$, so the latter follows e.g. from the results from \cite{DGH}. As $\Delta$-points pass to superspaces, it follows that $\ell_\infty^*$ contains $\Delta$-points. Now as $\ell_\infty^*$ is known to be isometrically isomorphic to some $L_1(\mu)$-space, it follows from \cite[Corollary~4.1]{MPRZ} that these point are actually super Daugavet points.

\end{proof}

Observe that Corollary~\ref{cor:complemented_renormings}~\ref{item:dual_c0} applies to any infinite dimensional Lipschitz-free space.  Moreover, a positive answer to the following question would immediately provide some improvements to this result.

\begin{question}

Can the space $\ell_1$ be renormed to have a super Daugavet point?

\end{question}

Let us recall that it is unknown whether the Daugavet molecule from Veeorg's space \cite{VeeorgStudia} is a super $\Delta$ or a super Daugavet point.



\end{document}